\documentclass[11pt]{amsart}
\usepackage{amssymb,graphics,epsfig,hyperref,overpic}

\textwidth=5.5in
\textheight=8in
\oddsidemargin=0.5in
\evensidemargin=0.5in
\topmargin=.5in

\newtheorem{thm}{Theorem}[section]
\newtheorem{lem}[thm]{Lemma}

\newtheorem{prop}[thm]{Proposition}

\theoremstyle{definition}

\newtheorem{note}[thm]{Note}

\theoremstyle{remark}

\newcommand{\R}{\mathbf{R}}
\newcommand{\RP}{\mathbf{RP}}

\newcommand{\ol}[1]{{\overline #1}}

\newcommand{\K}{\mathcal{K}}
\newcommand{\B}{\mathcal{B}}
\newcommand{\C}{\mathcal{C}}

\renewcommand{\d}{\partial}
\renewcommand{\P}{\mathcal{P}}
\renewcommand{\H}{\mathcal{H}}
\renewcommand{\S}{\mathbf{S}}
\renewcommand{\l}{\langle}
\renewcommand{\r}{\rangle}

\DeclareMathOperator{\inte}{int}

\DeclareMathOperator{\rc}{rc}
\DeclareMathOperator{\bc}{bc}
\DeclareMathOperator{\nc}{nc}
\DeclareMathOperator{\cm}{cm}
\DeclareMathOperator{\cl}{cl}
\DeclareMathOperator{\cd}{cd}
\DeclareMathOperator{\relbd}{rbd}
\DeclareMathOperator{\relint}{ri}
\DeclareMathOperator{\grad}{grad}
\DeclareMathOperator{\aff}{aff}
\DeclareMathOperator{\apex}{apex}

\begin{document}

\vspace*{-0.5in}

\title[Unbounded Convex Bodies and Hypersurfaces]{Deformations  of Unbounded Convex Bodies\\ and Hypersurfaces}

\author{Mohammad Ghomi}
\address{School of Mathematics, Georgia Institute of Technology,
Atlanta, GA 30332}
\email{ghomi@math.gatech.edu}
\urladdr{www.math.gatech.edu/$\sim$ghomi}
\keywords{Recession cone, Grassmannian, regular homotopy, total curvature, deformation retraction, Minkowski sum, bounded-Hausdorff or Attouch-Wets topology.}
\subjclass{53A07, 52A20}
\date{Last Typeset \today.}
\thanks{Supported by NSF Grant DMS-0336455.}

\begin{abstract}
We study the topology of the space $\d\K^n$ of  complete convex hypersurfaces of $\R^n$  which are homeomorphic to $\R^{n-1}$.
In particular, using Minkowski sums, we construct  a  deformation retraction  of $\d\K^n$ onto the Grassmannian space of hyperplanes.  So every hypersurface  in $\d \K^n$ may be flattened in a canonical way. Further, the total curvature of each hypersurface evolves continuously and monotonically under this deformation. 
 We also show that, modulo proper rotations,  the subspaces of $\d\K^n$ consisting of smooth, strictly convex, or positively curved hypersurfaces are each contractible, which settles a question of H. Rosenberg. 
\end{abstract}

\maketitle


\section{Introduction}
It is easy to see  that the space  of compact convex bodies $K\subset\R^n$, and their boundary hypersurfaces $\d K$, are contractible under Hausdorff topology. Indeed,  the Minkowski addition and scalar multiplication  yields a canonical homotopy 
\begin{equation}\label{eq:first}
K_t:=(1-t)K+t\, \mathbf{B}^n
\end{equation}  
between  $K$ and the unit ball $\mathbf{B}^n$, while $\d K_t$ deforms $\d K$ to  the sphere $\S^{n-1}=\d \mathbf{B}^n$.  Here we construct analogous   deformations for  \emph{unbounded convex bodies}, i.e., closed noncompact convex subsets of $\R^n$ with interior points. The most significant class of these objects is the space
$\K^{n}$  of convex bodies  with $\d K$ homeomorphic to $\R^{n-1}$, since  any other  convex body is the sum of a compact  convex set with a linear space (Lemma \ref{lem:rc4}). We study $\K^n$ (and the  corresponding space of boundary hypersurfaces $\d \K^n$) with respect to a refinement of the bounded-Hausdorff topology, called \emph{asymptotic topology}  (Section \ref{subsec:top}), which  ensures the continuity of  the  \emph{total curvature} function $\tau\colon \K^n\to\R$ (Proposition \ref{lem:olnc}). Here $\tau(K)$ is the measure in $\S^{n-1}$ of the \emph{unit normal cone}, or outward unit normals to support hyperplanes of $K$. Also let
$\H^n$ be the collection of  half-spaces in $\R^n$ whose boundaries pass through the origin to form the Grassmannian space $\d \H^n=Gr(n-1,n)$.

\begin{thm}\label{thm:main}
$\K^n$ (resp. $\d \K^n$) admits a regularity preserving strong deformation retraction onto  $\H^n$  (resp. $\d \H^n$) with respect to the asymptotic topology. Under this deformation the total curvature of each element of $\K^n$ (resp. $\d \K^n$) evolves monotonically, as its unit normal cone uniformly shrinks   to a single vector. 
\end{thm}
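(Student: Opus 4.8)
The plan is to imitate the compact‑case homotopy \eqref{eq:first}, with the unit ball replaced by a family of unbounded bodies tailored to collapse the unit normal cone to a single vector; the statement for $\d\K^n$ then follows by passing to boundaries. The first step is to attach to each $K\in\K^n$ a canonical unit vector $u_0=u_0(K)$. Using the standard fact that $v\in\rc(K)$ iff $\l v,u\r\le 0$ for all $u\in\nc(K)$, a spherical separation argument shows that $(-\rc K)\cap\nc(K)$ is nonempty; I take $u_0$ to be a continuously chosen center (say the spherical center of mass) of this nonempty spherically convex set, so that $u_0\in\nc(K)$, $-u_0\in\rc(K)$, and hence $\nc(K)$ lies in the closed hemisphere $\{u\in\S^{n-1}:\l u,u_0\r\ge 0\}$. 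Composing with a proper rotation — harmless, since the whole construction is $SO(n)$‑equivariant and $\H^n$ is $SO(n)$‑invariant — we may assume $u_0=-e_n$, and we write $H_0:=\{x\in\R^n:\l x,u_0\r\le 0\}\in\H^n$ for the target half‑space.

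Next I construct an explicit one‑parameter family $\{L_t\}_{t\in[0,1]}$ with $L_0=\{0\}$ and $L_1=H_0$, where for $t\in(0,1)$ each $L_t$ is a smooth, strictly convex, positively curved unbounded body — modelled on a shifted hyperboloid branch, e.g. $L_t=\{x:\l x,-u_0\r\ge\cot\theta(t)\,(\sqrt{|x''|^2+1}-1)\}$ with $x''$ the component of $x$ orthogonal to $u_0$ — contained in $H_0$, touching $\d H_0$ at the origin, whose recession cone $\rc(L_t)$ is the round cone about $-u_0$ of half‑angle $\theta(t)$ for a fixed increasing homeomorphism $\theta\colon[0,1]\to[0,\pi/2]$. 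Taking polar cones, $\nc(L_t)$ is the closed spherical cap about $u_0$ of radius $\tfrac{\pi}{2}-\theta(t)$ for $t\in(0,1)$, while $\nc(L_0)=\S^{n-1}$ and $\nc(L_1)=\{u_0\}$. The deformation is
$$K_t\ :=\ \bigl(K+L_t\bigr)\ -\ \lambda(t)\,h_K(u_0)\,u_0,$$
where $h_K$ is the support function and $\lambda\colon[0,1]\to[0,1]$ is fixed, continuous, with $\lambda(0)=0$, $\lambda(1)=1$, the translation serving only to push the limiting half‑space into $\H^n$.

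From $h_{K+L_t}=h_K+h_{L_t}$ one reads off $\nc(K_t)=\nc(K)\cap\nc(L_t)$, which equals $\nc(K)$ at $t=0$, equals $\nc(K)$ again in the limit $t\to 0^+$ (because $\nc(K)$ lies in the closed hemisphere about $u_0$), and then decreases continuously and monotonically, as $t\to1$, down to $\nc(K)\cap\{u_0\}=\{u_0\}$. Hence $\tau(K_t)$, being the measure of $\nc(K_t)$, decreases monotonically from $\tau(K)$ to $0$, while $\nc(K_t)$ — sitting inside a cap about $u_0$ of radius $\tfrac{\pi}{2}-\theta(t)\to0$ — shrinks uniformly into $u_0$. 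Moreover $K_0=K$ and $K_1=H_0$ (the half‑space $K+H_0$ being, after the prescribed translation, $H_0$ itself); and if $K\in\H^n$ then $\nc(K)=\{u_0\}$ and $h_K(u_0)=0$, so $\nc(K_t)=\{u_0\}$ forces $K_t$ to be a half‑space with outer normal $u_0$, and since $L_t\subseteq H_0$ meets $\d H_0$ at $0$ one has $h_{L_t}(u_0)=0$, whence $K_t=K$ throughout. Thus $(K,t)\mapsto K_t$, and its boundary analogue, will be a \emph{strong} deformation retraction once continuity is in hand. For continuity I would invoke the continuity of Minkowski addition, scalar multiplication and translation in the asymptotic topology (Section \ref{subsec:top}), together with the continuous dependence of $L_t$, $u_0(K)$ and $h_K(u_0)$ on their arguments; the delicate point is the degeneration at the endpoints — $L_t$ tends to the \emph{ray} in direction $-u_0$ (not to $\{0\}$) as $t\to0^+$, yet $K+L_t\to K$ all the same because $-u_0\in\rc(K)$ absorbs that ray, while as $t\to1$ the body $K_t$ collapses onto a half‑space, and it is precisely here, and in matching the limiting normal cone and total curvature, that the asymptotic refinement of the bounded‑Hausdorff topology does its work.

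Finally, regularity is preserved because $K+L$ is smooth as soon as one summand is (the normal cone of a sum at a point being the intersection of the summands' normal cones, hence a single ray when one factor is smooth), so $K_t$ is smooth for all $t$ whenever $K$ is — and since half‑spaces are smooth, this makes the restriction to smooth bodies a genuine deformation retraction onto $\H^n$; likewise $K+L$ is strictly convex when both $K$ and $L$ are, and the additivity $\nabla^2 h_{K+L_t}+h_{K+L_t}I=(\nabla^2 h_K+h_K I)+(\nabla^2 h_{L_t}+h_{L_t}I)$ of the radius‑of‑curvature form keeps $K_t$ positively curved when $K$ is, for $t\in[0,1)$ (at $t=1$ the half‑space leaves these two classes, which is unavoidable and is why the separate contractibility statements for those subspaces must be obtained by truncating the deformation short of $t=1$). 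The main obstacle is the second and third steps together: producing a family $\{L_t\}$ that is simultaneously smooth, strictly convex and positively curved with the prescribed recession cones, endpoints, containment and tangency, and then verifying that $(K,t)\mapsto K_t$ is continuous in the asymptotic topology — including the continuous canonical selection of $u_0(K)$ and the behaviour at the degenerate endpoints.
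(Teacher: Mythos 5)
Your broad strategy --- attach a canonical direction to $K$, translate to a canonical position, then add a hyperboloidal family $L_t$ interpolating from a ray in $\rc(K)$ to a half-space --- is essentially the paper's, and the formula $\ol\nc(K_t)=\ol\nc(K)\cap\ol\nc(L_t)$ with $\ol\nc(L_t)$ shrinking monotonically is the right mechanism for the curvature claim. But the two points you flag at the end as ``the main obstacle'' are precisely where the proof lives, and the proposal does not supply the needed ideas. Your canonical direction is the spherical center of mass of $(-\rc K)\cap\nc(K)$; yet $\nc(K)$ is in general neither closed nor convex (Wu), so this set need not be spherically convex, its center of mass need not lie in $\nc(K)$, and the asymptotic continuity of $K\mapsto u_0(K)$ is not clear. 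More seriously, even granting $u_0\in\nc(K)$ and $-u_0\in\rc(K)$, this is weaker than the property actually required: one needs \emph{balanced support}, i.e., the face of $K$ cut by the supporting hyperplane with outward normal $u_0$ must contain a half-line only when it contains a whole line. That is what makes the slabs $K\cap H_t$ compact (Lemma \ref{lem:technical}), which in turn is the hypothesis that guarantees $K+L_t$ is closed (cf.\ Note \ref{note:closed}) and that Minkowski addition is continuous (cf.\ Note \ref{note:cont}); on $\K^n$ without such a hypothesis both conclusions fail. The paper instead takes the center of mass of $\ol\rc(K)$ --- continuous essentially by fiat, since $\rc$ enters the asymptotic metric --- and then \emph{proves} balance via the spherical reflection Lemma \ref{lem:sphere} and Lemma \ref{lem:proper}; that argument is the key idea missing from your proposal.

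Your regularity justification is also incorrect: ``the normal cone of a sum at a point is the intersection of the summands' normal cones, hence a single ray when one factor is smooth'' proves only $C^1$, not $C^k$ for $k\ge 2$. By Kiselman's example (Note \ref{note:regularity}) the sum of two $C^\infty$ plane convex bodies can fail to be $C^2$, so ``smooth as soon as one summand is'' is simply false. The correct hypothesis --- which your $L_t$ does satisfy for $0<t<1$, so the deformation itself is salvageable --- is that one summand be positively curved, and the paper proves $C^k$-preservation under that hypothesis via the inverse Gauss map (Proposition \ref{prop:sum}). Relatedly, the identity $\nabla^2 h_{K+L}+h_{K+L}I=(\nabla^2 h_K+h_KI)+(\nabla^2 h_L+h_LI)$ presumes a globally defined support function, which an unbounded body lacks; and the finiteness of $h_K(u_0)$ used in your translation requires $u_0\in\bc(K)$, not merely $u_0\in\cl(\nc(K))$. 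The paper sidesteps both: regularity and curvature are handled through $\nu^{-1}$ rather than $h$, and the normalization is by the apex $\apex(K)$ rather than a support value.
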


The term \emph{strong deformation retraction}  here means that there exists a  continuous map $\K^n\times [0,1]\to \K^n$, $(K,t)\mapsto K_t$, such that $K_0=K$, $K_1\in \H^n$, and $K_t$ is constant for all  $K\in\H^n$. Further, by \emph{regularity preserving} we mean that if $\d K_0$  is of regularity  class $C^k$, for $1\leq k\leq\infty$ or $k=\omega$, then  $\d K_t$ is also (at least) $C^k$ for all $t$.
Note  that $\H^n$ and $\d \H^n$ are homeomorphic to $\S^{n-1}$ and the real projective space $\RP^{n-1}$ respectively; thus Theorem \ref{thm:main} shows that $\K^n$ and $\d\K^n$ are not topologically trivial. On the other hand, since the special orthogonal group $SO(n)$ acts transitively on $\H^n$ and $\d \H^n$, it follows that the quotient spaces $\K^n/SO(n)$ and $\d \K^n/SO(n)$ are contractible. A similar phenomenon also holds for certain subspaces of $\K^n$ and $\d \K^n$ by the next result. Here $\K^n_+$ denotes the space of unbounded convex bodies which are strictly convex at some point (Section \ref{subsec:notation}), and $\P^n$ is  the space of paraboloidal convex bodies  generated by the action of $SO(n)$ on   $\{x_n\geq\sum_{i=1}^{n-1} x_i^2\}$. Further,  $\d \K^n_+$ and $\d \P^n$ are the spaces of the corresponding boundary hypersurfaces.

\begin{thm}\label{thm:main2}
$\K^n_+$ (resp. $\d \K^n_+$) admits a  regularity preserving strong deformation retraction onto $\P^n$ (resp. $\d \P^n$) with respect to the asymptotic topology. Furthermore, if an element of $\K^n_+$ (or $\d \K^n_+$) is  strictly convex, or  has positive Gaussian curvature, then each of these properties will be preserved under the deformation. 
\end{thm}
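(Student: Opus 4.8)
The plan is to build the deformation retraction of Theorem~\ref{thm:main2} by piggybacking on the Minkowski-sum construction already used (or to be used) for Theorem~\ref{thm:main}, but with the unit ball $\mathbf{B}^n$ replaced by a paraboloidal body, so that the target is $\P^n$ rather than $\H^n$ and the positive-curvature/strict-convexity structure is reinforced rather than eventually destroyed. Concretely, for $K\in\K^n_+$ I would first normalize by a proper rotation so that the "strictly convex point" of $K$ has an outward normal pointing in the $-x_n$ direction (or record the relevant normal as a continuously varying parameter), and then consider a homotopy of the shape
\[
K_t := \lambda(t)\,K + \mu(t)\,P_0,
\]
where $P_0=\{x_n\geq\sum_{i=1}^{n-1}x_i^2\}$ is the standard paraboloid body, $\lambda,\mu\colon[0,1]\to[0,\infty)$ are chosen with $\lambda(0)=1$, $\mu(0)=0$, $\lambda(1)=0$, $\mu(1)=1$, and $\lambda(t)>0$ for $t<1$. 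Because $P_0$ has a translation-invariance only along directions in its recession cone (a ray), one must check that this Minkowski sum stays inside $\K^n_+$; the key point is that $\rc(K)$ and $\rc(P_0)$ are "compatible" — the recession cone condition that characterizes $\K^n_+$ is stable under taking sums with $P_0$ after the rotational normalization.

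**Key steps.** (1) Establish that $\lambda K+\mu P_0\in\K^n_+$ for all $\lambda\ge0$, $\mu>0$, using Lemma~\ref{lem:rc4} and the behavior of recession cones under Minkowski addition (the recession cone of a sum is the sum of the recession cones), and verify that at $t=1$ the result lies in $\P^n$. (2) Check continuity of $t\mapsto K_t$ in the asymptotic topology: this should reduce to continuity of Minkowski addition and scalar multiplication in that topology, a fact presumably proved in Section~\ref{subsec:top}; the paraboloidal summand is harmless because its asymptotic behavior is fixed. (3) Verify the strong-deformation-retraction axioms: $K_0=K$ is immediate, $K_1\in\P^n$ follows once $\lambda(1)=0$ (so $K_1=P_0$ up to the recorded rotation, which is the identity on $\P^n$ when $K\in\P^n$ already), and constancy on $\P^n$ requires choosing $\lambda,\mu$ so that $\lambda(t)P_0+\mu(t)P_0 = (\lambda(t)+\mu(t))P_0$ is a reparametrization of $P_0$ — i.e.\ $\lambda(t)+\mu(t)$ should be constant, say $\equiv1$, which forces $\lambda(t)=1-t$, $\mu(t)=t$, recovering the exact analogue of \eqref{eq:first}. (4) Regularity preservation: if $\d K$ is $C^k$ then so is $\d K_t$, because adding a $C^\infty$ (indeed $C^\omega$) strictly convex summand like $tP_0$ only improves regularity — this is the standard fact that the support function of a Minkowski sum is the sum of support functions, and strict convexity of one summand makes the sum's support function inherit the smoothness of the better summand on the relevant part of the sphere. (5) Strict convexity and positive Gaussian curvature: here one uses that $P_0$ is strictly convex with everywhere positive curvature, and that Minkowski addition with such a body preserves (indeed enforces) these properties — via the inverse Gauss map / second fundamental form of a sum, whose principal radii are sums of the principal radii of the summands, hence positive as soon as one summand has positive principal radii.

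**Main obstacle.** The delicate point is step (1) together with the interaction between the rotational normalization and continuity in step (2). The space $\K^n_+$ is defined by a pointwise strict-convexity condition, not a global one, so I must ensure the Minkowski sum does not "lose" the one good point — and, more seriously, that the choice of rotation (or of which normal direction to align $P_0$ with) can be made to depend continuously on $K$ across all of $\K^n_+$. If strict convexity holds only at a single point whose normal direction is not canonically determined, there may be no global continuous section, and one might instead need to sum with a rotationally symmetric-enough paraboloid or argue that the direction of $\rc(K)$'s lineality-free structure already pins down the needed axis. I expect this is handled by first reducing, via Lemma~\ref{lem:rc4} and an analysis of recession cones, to the case where $\rc(K)$ determines a well-defined axis (the paraboloid's recession ray), making the alignment canonical; verifying that reduction carefully is where the real work lies. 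The monotonic-curvature and normal-cone-shrinking assertions, by contrast, should follow formally from the additivity of normal cones under Minkowski sum, exactly as in Theorem~\ref{thm:main}.
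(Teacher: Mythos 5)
Your step~(3) reasoning --- that constancy on $\P^n$ together with the ansatz $K_t=\lambda(t)K+\mu(t)P_0$ forces $\lambda(t)+\mu(t)\equiv 1$, hence $\lambda=1-t$, $\mu=t$ --- is internally correct, but it is exactly what reveals the fatal gap in step~(1). The resulting formula $K_t=(1-t)K+tP_0$ does \emph{not} retract onto $\P^n$. As $t\to 1^-$ the body $(1-t)K$ converges (in the bounded--Hausdorff sense) not to a point or a half-line but to the recession cone $\rc(K)$, so $K_t\to \rc(K)+P_0$; and for a body $K\in\K^n_+$ the cone $\rc(K)$ can have dimension $\geq 2$ (for instance the region $\{y\geq f(x)\}$ in $\R^2$ with $f$ smooth, strictly convex near $0$ and equal to $|x|$ for $|x|\geq 1$ is strictly convex at the origin yet has a full two-dimensional recession wedge). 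Unless $\rc(K)$ is a single ray contained in $\rc(P_0)$, one gets $\rc(K)+P_0\neq P_0$, so either $K_1\notin\P^n$ or the homotopy is discontinuous at $t=1$. This is precisely why the paper inserts an anisotropic stretch: its homotopy is
\begin{equation*}
K_t:=(1-t)\,S_{\frac{1}{1-t},\cd(K)}(K)+t\,P_{\cd(K)},\qquad S_{\lambda,u}(x):=x+(\lambda-1)\l x,u\r u.
\end{equation*}
The factor $(1-t)S_{\frac{1}{1-t},\cd(K)}$ leaves lengths along $\cd(K)$ unchanged while squeezing transverse directions by $(1-t)$, so the first summand collapses to the half-line generated by $\cd(K)$ as $t\to 1$, which does lie in $\rc(P_{\cd(K)})$, giving $K_1=P_{\cd(K)}$. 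Your pure Minkowski recipe cannot achieve this; the stretch is the missing idea.

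The second place where your proposal stays vague is exactly where you flagged the ``main obstacle'': which axis to align the paraboloid with, and whether that choice can be made continuously. The paper does not use the normal at a strictly convex point (which, as you rightly suspect, is not canonical); instead it builds an entire section around a canonical invariant, the \emph{central direction} $\cd(K)$, defined as the normalized average of $\rc(K)\cap\S^{n-1}$. Proposition~\ref{prop:direction} shows this is well defined for every $K\in\K^n$, is a \emph{balanced} direction (a property needed so that Minkowski sums stay in the right class and remain closed --- cf.\ Lemma~\ref{lem:technical}, Propositions~\ref{prop:continuous} and~\ref{prop:continuous2}, and Note~\ref{note:closed}), and depends asymptotically continuously on $K$. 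Your intuition that the axis ``should be pinned down by $\rc(K)$'' is right, but realizing it requires precisely this center-of-mass-of-the-recession-cone construction, and the balance property has to be proved (via Lemma~\ref{lem:sphere} and Lemma~\ref{lem:proper}); it is not a routine reduction. Finally, your step~(4) argument via additivity of support functions is fine for compact bodies but does not directly apply to unbounded ones (the support function is not conventionally defined there); the paper instead proves regularity and positive-curvature preservation through a direct parameterization of $\d(K_0+K_1)$ using the Gauss maps (Proposition~\ref{prop:sum}). Your steps~(2), (4), (5) are morally in the right direction, but the absence of the stretching operator and the central-direction machinery means the proposal as written does not yield a retraction.
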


The first step in proving the above theorems  is to partition  $\K^n$  into subsets each of which is associated with a certain unit vector $u\in\S^{n-1}$ called the \emph{central direction}. This notion, which refines some previous works of Wu \cite{wu:spherical}, will be developed  in Section \ref{sec:crd}. Then we show in Section \ref{sec:proofs} that the bodies  in each direction class may be deformed to a preferred body within that class (i.e., a half-space or a solid paraboloid). 
Similar to  \eqref{eq:first}, these deformations are constructed explicitly using Minkowski sums; however, the situation here is considerably more involved since the sum of a pair of convex bodies in $\K^n$  or $\K^n_+$ may no longer belong to these spaces; not to mention that the Minkowski addition is not even a continuous operation on $\K^n$.  Thus in Sections \ref{sec:continuous} and \ref{sec:sum} we derive the conditions  for the Minkowski addition to preserve the natural  geometric and topological  properties we need, which may also be of independent interest.   

 Unbounded convex bodies arise naturally in convex analysis and optimization  since they form the epigraphs of convex functions \cite{rockafellar, rw}; while unbounded convex hypersurfaces have been  studied in differential geometry in terms of their local characterizations \cite{stoker,vH:convex,sacksteder:convex}, and isometric embeddings \cite{alexandrov:polyhedra, pogorelov:book}. See also \cite{alexander&ghomi:chp,alexander&ghomi:chpII,agw} for more recent results involving  hypersurfaces with boundary. The main motivation for this work, however, arises from the study of regular homotopy classes of positively curved surfaces in $\R^3$ by Gluck and Pan \cite{gluck&pan}, which was  generalized to higher dimensions by the author and Kossowski \cite{ghomi&kossowski} via the $h$-principle \cite{gromov:pdr, eliashberg&mishachev}. These papers described  the path components of the space of compact positively curved hypersurfaces with boundary; thus setting the stage for exploring the topology  of the space of \emph{complete} positively curved hypersurfaces in this paper.  The study of  homotopy with curvature bounds originates with the works of Feldman \cite{feldman:curve, feldman:mean}, and has  been a subject of interest since then  \cite{little:curve, little:torsion, gluck&pan, ghomi&kossowski,ghomi:knots,ekholm1,ekholm2,rosenberg:constant}.

\begin{note}
It might be tempting to think that the flattening procedure of Theorem \ref{thm:main} could be carried out also by means of the (reverse) mean curvature flow, at least in the smooth case; however, there are unbounded convex hypersurfaces such as  the ``grim-reaper"  in $\R^2$ \cite{angenent2}, given by $y=\log(\cos(x))$, which evolve by translations and thus never become flat. 
Such self-similar solutions exist also in higher dimensions \cite{eh}, and for Gauss curvature flow \cite{urbas}, which highlight the utility  of the algebraic approach to surface deformation studied here.
\end{note}

\section{Preliminaries: Finding a Suitable Topology}\label{sec:top}
 The standard topology on the space of compact convex bodies is the Hausdorff metric topology, which admits a direct generalization to the space of unbounded bodies; however, this topology is much too rigid at infinity to allow  the deformations  we seek. On the other hand, the most common relaxation of  the Hausdorff topology,  which is known as bounded-Hausdorff topology, is too weak for our purposes here since it does not force the continuity of the total  curvature. To control the total curvature, we must control the recession cones, and  we strengthen the bounded-Hausdorff topology accordingly, as described below.

\subsection{Basic notation and terminology}\label{subsec:notation}
In this paper $\R^n$ is the $n$-dimensional Euclidean space with origin $o$, standard inner product $\l\cdot,\cdot\r$ and norm $\|\cdot\|$. The sphere $\S^{n-1}$ and ball $\mathbf{B}^n$  consist of  points $x\in\R^n$ such that $\|x\|=1$ and $\|x\|\leq 1$ respectively. For  $A$, $B\subset\R^n$, the \emph{Minkowski sum} $A+B$ is  the collection of all $a+b$ where $a\in A$ and $b\in B$. Further for any $\lambda\in\R$, $\lambda A$ denotes the set  of all $\lambda a$, where $a\in A$.  A hyperplane $\d H\subset\R^n$ \emph{supports} $A$ when $\d H\cap A\neq\emptyset$ and $A$ lies on one side of $\d H$. If $A\not\subset\d H$, then the \emph{outward normal} of $\d H$ is the unit  vector normal to $\d H$ which points into the half-space determined by $\d H$ which does not contain $A$. The \emph{affine hull} of $A$,  $\aff(A)$, is  the affine subspace of least dimension  which contains $A$. The \emph{relative interior} $\relint(A)$ and \emph{relative boundary} $\relbd(A)$ are the interior and boundary of $A$ as  subsets of $\aff(A)$.  We say $A$ is \emph{relatively proper} if $A\neq\aff(A)$.
In this paper a \emph{convex body} $K\subset\R^n$ is  always a  closed convex set with interior points, and a \emph{convex hypersurface} is the boundary $\d K$ of a convex body. Our main object of study is the space $\K^n$ of  unbounded convex bodies whose boundary is homeomorphic to $\R^{n-1}$.
We also frequently mention the space $\K^n_+$  of those unbounded convex bodies $K\subset\R^n$ which have a strictly convex  point $p\in\d K$, i.e., there exists a support hyperplane $\d H$ of $K$ such that $K\cap\d H=\{p\}$. Lemmas \ref{lem:rc4} and \ref{lem:kpluscompact} below show that $\K^n_+\subset \K^n$ as we describe in the next subsection. 

\subsection{Recession cones}\label{subsec:rc}
Here we record for easy reference some basic facts on recession cones, which are instrumental in studying the asymptotic behavior of convex bodies \cite{rockafellar,rw}.
The \emph{recession cone} (a.k.a. asymptotic or horizon cone) of a convex body $K\subset\R^n$  is defined as 
$$
\rc(K):=\{\,v\in\R^n\mid K+v\subset K\,\}.
$$
 It is well known \cite[Thm. 8.2]{rockafellar} that $rc(K)$ is closed, convex, and is a \emph{cone}, i.e., for every $x\in\rc(K)$, and $\lambda> 0$, $\lambda x\in \rc(K)$. Note that $\rc(K)$ is the limit, in the sense of bounded Hausdorff topology, of rescalings $\lambda K$ as $\lambda\to 0$ ($\rc(K)$ represents what $K$ looks like when viewed from far away).
  It is easy to see that $\rc(K)$ contains a half-line $\ell$ if and only if  every point of $K$ is the source  of a half-line parallel to $\ell$. Further it can be shown that if $K$ contains a half-line $\ell$ then every point of $K$ is the source of a half-line parallel to $\ell$ \cite[Thm. 8.3]{rockafellar}. Furthermore, a closed convex set must contain a half-line if it is unbounded \cite[Thm. 8.4]{rockafellar}. In summary, we have
 
 \begin{lem}[\cite{rockafellar}]\label{lem:rc1}
 Let $K\subset\R^n$ be a convex body and $u\in\S^{n-1}$. Then the following are equivalent:
 \begin{enumerate}
 \item A point of $K$ is the source of a half-line parallel to $u$.
 \item Every point of $K$ is the source of a half-line parallel to $u$.
  \item $u\in\rc(K)$.
 \end{enumerate}
 In particular, $K$ is bounded if and only if $\rc(K)=o$.\qed
\end{lem}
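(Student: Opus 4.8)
The plan is to prove the chain of implications $(2)\Rightarrow(1)\Rightarrow(3)\Rightarrow(2)$ and then deduce the boundedness criterion. The implication $(2)\Rightarrow(1)$ is immediate since $K\neq\emptyset$. For $(3)\Rightarrow(2)$ I would invoke the fact, recorded above following \cite[Thm.~8.2]{rockafellar}, that $\rc(K)$ is a cone: if $u\in\rc(K)$ then $tu\in\rc(K)$ for every $t>0$, so $K+tu\subset K$ by the definition of the recession cone, and hence for each $x\in K$ the entire ray $\{x+tu : t\geq 0\}$ lies in $K$, which is statement $(2)$.

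The only step that needs a genuine argument is $(1)\Rightarrow(3)$. Assuming $x_0\in K$ is the source of a half-line $\{x_0+tu : t\geq 0\}\subset K$, I fix an arbitrary $y\in K$ and show $y+u\in K$; as $y$ is arbitrary this gives $K+u\subset K$, i.e.\ $u\in\rc(K)$. For each $t\geq 1$ the segment joining $y$ to $x_0+tu$ lies in $K$ by convexity, and in particular
\[
(1-1/t)\,y+(1/t)(x_0+tu)\;=\;y+u+(1/t)(x_0-y)\;\in\;K.
\]
Letting $t\to\infty$ and using that $K$ is closed yields $y+u\in K$, as desired.

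For the boundedness claim: if $K$ is unbounded, choose $x_j\in K$ with $\|x_j\|\to\infty$, fix $x_0\in K$, and pass to a subsequence along which $(x_j-x_0)/\|x_j-x_0\|\to u$ for some $u\in\S^{n-1}$; the same convexity-and-closedness argument then shows $\{x_0+tu : t\geq 0\}\subset K$, so $u\in\rc(K)\setminus\{o\}$. Conversely, if $\rc(K)\neq\{o\}$ it contains a nonzero vector and hence, being a cone, a half-line issuing from $o$, so $K$ is unbounded. The main (and really the only) delicate point is the passage to the limit in the two displayed arguments, where convexity keeps the approximating points inside $K$ and closedness secures the limit; everything else is formal, given the structural facts about $\rc(K)$ cited in the text.
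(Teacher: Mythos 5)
Your proof is correct. The paper itself gives no proof: it simply cites Rockafellar's Theorems~8.2--8.4 and records the lemma as a summary of those results. The argument you supply --- the convex-combination-and-closedness device for $(1)\Rightarrow(3)$ and for producing a recession direction from an unbounded sequence --- is precisely the standard argument underlying the cited theorems, so your route is essentially the same as the paper's, just written out in full rather than quoted.

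One small stylistic point: in the unboundedness half you should note explicitly (as you implicitly do) that for fixed $t\geq 0$ and $j$ large the point $x_0+t\,(x_j-x_0)/\|x_j-x_0\|$ lies on the segment from $x_0$ to $x_j$ because $t/\|x_j-x_0\|\leq 1$; this is the only place where the normalization matters, and making it visible keeps the limit passage airtight.
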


Thus through every point of $K$ there passes  an affine subspace $A$ parallel to a linear subspace $L\subset\R^n$ if and only if $L\subset\rc(K)$. If $L$ has maximal dimension, then we call $L$ the \emph{linearity space} of $K$. So
 $K=\ol K+L$, where $\ol K$ is the  projection of $K$ into $L^\perp$, the orthogonal complement of $L$.
Since $\ol K$ does not contain any lines,   $\d\ol K$ is either homeomorphic to a sphere or a Euclidean space \cite[p. 3]{busemann}. (To see this observe that for any convex body $K\subset\R^n$, $\d  K$ is homeomorphic to $\S^{n-1}-\S^{n-1}\cap\rc(K)$, since if $o$ is in the interior of $K$, then every half-line originating from $o$ which is not in $\rc(K)$ intersects $\d K$ in a unique point.) Thus we have:

\begin{lem}\label{lem:rc4}
If $K\varsubsetneq\R^n$ is an unbounded convex body, then  $$K=\ol K+L,$$ where $L$ is the linearity space of $K$ and $\ol K$ is the orthogonal projection of $K$ into $L^\perp$.  Furthermore, if $\dim(L)=m$, then $\d\ol K$ is either homeomorphic to $\R^{n-m-1}$ or $\S^{n-m-1}$,  and so $\d K$ is
either homeomorphic to $\R^{n-m-1}\times\R^m=\R^{n-1}$ or $\S^{n-m-1}\times\R^m$ respectively. In particular, $K\in\K^n$ if and only if $\ol K$ is unbounded.\qed
\end{lem}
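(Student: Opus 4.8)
The plan is to assemble the recession–cone facts recorded above into a splitting step followed by a topological identification of the boundary. First I would peel off the linearity space. Let $L:=\rc(K)\cap(-\rc(K))$, the largest linear subspace contained in $\rc(K)$. For a linear subspace, the conditions $u,-u\in\rc(K)$ say, by Lemma~\ref{lem:rc1}, that every point of $K$ issues half-lines in both directions $\pm u$, i.e.\ that $K+\R u=K$; hence $L$ is exactly the linearity space of $K$ in the sense of Section~\ref{subsec:rc}, and $K+L=K$. Writing $\pi\cn\R^n\to L^\perp$ for the orthogonal projection and $\ol K:=\pi(K)$, the invariance $K+L=K$ forces $K=\pi^{-1}(\ol K)=\ol K+L$ and $\ol K=K\cap L^\perp$; thus $\ol K$ is closed and convex, it has interior points in $L^\perp$ since the open map $\pi$ carries $\inte K$ to a nonempty open subset of $L^\perp$ contained in $\ol K$, and it contains no line: a line in $\ol K$ in a direction $v\in L^\perp$ would make $K=\ol K+L$ invariant under translations by the strictly larger subspace $L\oplus\R v$, contradicting the maximality of $L$. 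Note also that $0\le m:=\dim L\le n-1$, since $m=n$ would give $K=\R^n$.

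Next I would pin down $\d\ol K$, a line-free convex hypersurface in $L^\perp\cong\R^{n-m}$. After translating $o$ into $\inte\ol K$, Lemma~\ref{lem:rc1} shows that a ray from $o$ in a direction $u\in\S^{n-m-1}$ meets $\d\ol K$ in a single point when $u\notin\rc(\ol K)$ and in no point when $u\in\rc(\ol K)$, so the radial projection $x\mapsto x/\|x\|$ restricts to a homeomorphism from $\d\ol K$ onto $\S^{n-m-1}\setminus(\S^{n-m-1}\cap\rc(\ol K))$. If $\ol K$ is bounded then $\rc(\ol K)=o$ by Lemma~\ref{lem:rc1}, so $\d\ol K\cong\S^{n-m-1}$. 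If $\ol K$ is unbounded then $\rc(\ol K)$ is a nonzero closed convex cone which, since $\ol K$ is line-free, is pointed; hence $\S^{n-m-1}\cap\rc(\ol K)$ is a nonempty compact, spherically convex subset of an open hemisphere, and its complement in $\S^{n-m-1}$ is homeomorphic to $\R^{n-m-1}$, as recorded in \cite[p.~3]{busemann}.

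Finally, since $K=\ol K+L$ is the product $\ol K\times\R^m$ inside $L^\perp\times L$ and the $\R^m$ factor carries no boundary, one has $\d K\cong\d\ol K\times\R^m$, which is $\R^{n-m-1}\times\R^m=\R^{n-1}$ when $\ol K$ is unbounded and $\S^{n-m-1}\times\R^m$ when $\ol K$ is bounded. Because $\S^{n-m-1}\times\R^m$ retracts onto $\S^{n-m-1}$, which is not contractible (it is disconnected when $n-m-1=0$ and carries a nontrivial fundamental class otherwise), it cannot be homeomorphic to the contractible space $\R^{n-1}$; hence an unbounded convex body $K\varsubsetneq\R^n$ lies in $\K^n$ precisely when $\ol K$ is unbounded. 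The one point that is not routine bookkeeping with the quoted recession-cone results is the last topological claim of the middle paragraph — that deleting a compact spherically convex cap from a sphere leaves a Euclidean space. It is transparent when the cap is a single point, and in general follows by contracting the cap to a point through spherically convex sets via an ambient isotopy of $\S^{n-m-1}$; I would invoke \cite{busemann} here rather than spell this out.
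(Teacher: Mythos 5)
Your proof is correct and follows essentially the same route the paper takes: split off the linearity space to write $K=\ol K+L$, use the radial projection $x\mapsto x/\|x\|$ from an interior point to identify $\d\ol K$ with $\S^{n-m-1}\setminus\ol{\rc(\ol K)}$, cite Busemann for the homeomorphism type of that complement, and take the product with $\R^m$. The paper compresses all of this into the paragraph preceding the lemma (which is why the statement carries a \qed), so your version simply spells out the details more explicitly, including the pointedness of $\rc(\ol K)$ and the non-contractibility argument for the ``if and only if'' clause.
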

 
 So any proper unbounded convex body $K\not\in\K^n$ is the sum of a compact set with a linear space. In this sense, $\K^n$ may be regarded as the space of \emph{irreducible} proper convex bodies. Further note that if $K$ is a convex body without lines, then $K=\ol K$ in the above lemma. In particular $\ol K$ is unbounded and so $K\in \K^n$. This argument yields that $\K^n_+\subset\K^n$, since as we will show in Lemma \ref{lem:kpluscompact} below, the elements of $\K^n_+$ contain no lines.

\subsection{Asymptotic topology}\label{subsec:top}
Let us recall that  compact subsets of a metric space $M$ admit a standard topology induced by the \emph{Hausdorff distance}  defined as follows. For any set $A\subset M$, let $A_r$ denote the collection of points which are within a distance $r$ of $A$. Then the Hausdorff distance between  $A$, $B\subset M$ is given by
$$
d_h(A,B):=\inf\{r\geq 0\mid A\subset B_r,\quad\text{and}\quad B\subset A_r\}.
$$
This defines a metric on the space of compact subsets of $M$.
 The corresponding topology may be extended to the collection of closed subsets of $M$ by using the sets $A_r$ as basis elements, i.e., by stipulating that $A_i$ converges to $A$, $A_i\overset{h}{\to} A$, provided that $A_i$ eventually lies in $A_r$ for any given $r>0$.
The resulting \emph{Hausdorff topology}, however, will be too strong for our purposes here;  because under this topology the family of convex bodies in $\R^2$ given by $y\geq t |x|$ does not converge to the upper-half plane $\mathbf{H}^2$, as $t\to 0$. 

One way to weaken the Hausdorff topology so that it becomes asymptotically less rigid, is via a one-point compactification: for instance if $\pi\colon \S^{n}-\{(0,\dots,0,1)\}\to \R^{n}$ denotes the stereographic projection,  we may define the \emph{bounded-Hausdorff distance} between subsets of $\R^n$ as
\begin{equation}\label{eq:w}
d_{bh}(A,B):=d_h\Big(\pi^{-1}(A),\pi^{-1}(B)\Big).
\end{equation}
 Then the collection of the closed subsets of $\R^n$ turns into a metric space, and we call the corresponding topology the \emph{bounded-Hausdorff topology}.
 In particular note that a family of closed sets $A_i\subset\R^n$ converges to $A$ with respect to the bounded-Hausdorff topology,
$A_i\overset{bh}{\to} A$, if and only if $A_i\cap C\overset{h}{\to} A\cap C$ for every bounded set $C\subset\R^n$. 
This notion  has also been called ``set convergence" \cite[Cor. 4.7]{rw}, and agrees with  Wijsman, Attouch-Wets, and Fell topologies  \cite{beer} in finite dimensions. 

But the bounded-Hausdorff topology has a significant  shortcoming: consider the family of convex bodies in $\R^2$ given by $y\geq tx^2$. As $t\to 0$, these bodies converge to  $\mathbf{H}^2$, with respect to the bounded-Hausdorff topology, while their recession cones converge to the upper half of the $y$-axis (instead of $\rc(\mathbf{H}^2)=\mathbf{H}^2$). So under the bounded-Hausdorff topology, the mapping $K\mapsto\rc(K)$ is not continuous. This is an essential requirement, however, for  controlling the total curvature of our deformations (Proposition  \ref{lem:olnc}). So we enhance the bounded-hausdorff topology as follows:  for  convex  sets $K_0$, $K_1\subset\R^n$, define their \emph{asymptotic distance} as
$$
d_a(K_0,K_1) := d_{bh}(K_0,K_1)+d_{bh}\Big(\rc(K_0),\rc(K_1)\Big).
$$
 The topology induced by this metric on closed convex subsets of $\R^n$, which we call the \emph{asymptotic  topology}, is the one which we impose on $\K^n$. This also topologizes $\d \K^n$ via the boundary map   $K\overset{\d}{\mapsto}\d K$. In other words, a sequence of hypersurfaces $M_i\in\d \K^n$ converges asymptotically to $M_\infty\in\d \K^n$, $M_i\overset{a}{\to} M_\infty$, provided that there is a sequence of convex bodies $K_i\in \K^n$ which converges asymptotically to $K_\infty\in\K^n$, $K_i\overset{a}{\to} K_\infty$, where $\d K_i=M_i$ and  $\d K_\infty=M_\infty$.
  
\subsection{Normal cones and total curvature}\label{subsec:curvature}
The \emph{normal cone} $\nc(K)$ of a convex body $K\subset\R^n$ is the set of all outward normals of support hyperplanes of $K$ plus $o$. Wu has shown that, while $\nc(K)$ is not in general convex (!),  its  closure, $\cl(\nc(K))$ is always a closed convex cone \cite{wu:spherical}.
The \emph{unit normal cone} of $K$ is denoted by $\ol \nc(K):=\nc(K)\cap\S^{n-1}$. The Hausdorff measure of $\ol\nc(K)\subset\S^{n-1}$ is the \emph{total curvature} $\tau(K)$, which coincides with the integral of the Gaussian curvature of  $\d K$ when $\d K$ is $C^2$. The purpose of this section is to check that $\tau(K)$ is well-behaved with respect to the asymptotic topology defined above. Specifically, if we let $\C^n$ denote the space of closed convex cones in $\R^n$, then we show:

\begin{prop}\label{lem:olnc}
The mapping $\K^n\ni K\mapsto\cl(\nc(K))\in\C^n$ is continuous with respect to the asymptotic topologies on $\K^n$ and $\C^n$.
In particular, the total curvature  map $\tau\colon\K^n\to\R$, is  asymptotically continuous.
\end{prop}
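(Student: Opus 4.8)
The plan is to reduce the continuity of $K\mapsto\cl(\nc(K))$ to that of the polarity operation on cones, exploiting the fact that the asymptotic topology was built precisely to control recession cones. The starting point is the identity
\[
\cl(\nc(K))\;=\;(\rc K)^{\circ}\;:=\;\{\,u\in\R^n : \langle u,v\rangle\le 0 \text{ for all } v\in\rc(K)\,\},
\]
valid for every closed convex body $K$. To prove it, write $h_K(u):=\sup_{x\in K}\langle x,u\rangle$ for the support function and $\bc(K):=\{u : h_K(u)<\infty\}$ for the barrier cone, which is a convex cone. Straight from the definitions, $v\in\rc(K)$ iff $\langle v,u\rangle\le 0$ for every $u\in\bc(K)$, i.e.\ $\rc(K)=(\bc K)^{\circ}$; hence the bipolar theorem gives $\cl(\bc K)=(\rc K)^{\circ}$. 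On the other hand, a nonzero $u$ lies in $\nc(K)$ exactly when the supremum defining $h_K(u)$ is attained, which plainly forces $u\in\bc(K)$; conversely, if $u$ lies in the relative interior of $\bc(K)$ then $\partial h_K(u)$ is nonempty (a convex function has nonempty subdifferential throughout the relative interior of its domain), and since $\partial h_K(u)$ is exactly the face of $K$ exposed by $u$, the supremum is attained and $u\in\nc(K)$. Thus $\relint(\bc K)\subset\nc(K)\subset\bc(K)$, and taking closures yields $\cl(\nc K)=\cl(\bc K)=(\rc K)^{\circ}$. In particular $\cl(\nc K)$ is a closed convex cone, which re-proves Wu's theorem and shows that the map in question indeed lands in $\C^n$.

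Next, for a closed convex cone $C$ one has $\rc(C)=C$, so the asymptotic topology on $\C^n$ coincides with the bounded-Hausdorff topology; and since the asymptotic metric on $\K^n$ contains the summand $d_{bh}(\rc K_0,\rc K_1)$, the assignment $K\mapsto\rc(K)$ is automatically continuous from $\K^n$ to $\C^n$. By the identity above, the proposition is therefore equivalent to the statement that the polar map $C\mapsto C^{\circ}$ is continuous on $\C^n$ with respect to the bounded-Hausdorff topology. I would establish this via Moreau's conic decomposition, which gives $u=\Pi_C(u)+\Pi_{C^{\circ}}(u)$ with the two summands orthogonal, whence
\[
\dist(u,C^{\circ})\;=\;\|\Pi_C(u)\|\qquad\text{for every }u\in\R^n .
\]
Since bounded-Hausdorff convergence $C_i\to C$ of closed convex sets is equivalent to pointwise (indeed locally uniform) convergence of the metric projections $\Pi_{C_i}\to\Pi_C$, as well as to pointwise convergence of the distance functions $\dist(\cdot,C_i)\to\dist(\cdot,C)$ \cite{rw,beer}, the displayed formula gives $\dist(\cdot,C_i^{\circ})=\|\Pi_{C_i}(\cdot)\|\to\|\Pi_C(\cdot)\|=\dist(\cdot,C^{\circ})$ pointwise, which is exactly bounded-Hausdorff convergence $C_i^{\circ}\to C^{\circ}$. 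Hence polarity is continuous, and so is $K\mapsto\cl(\nc(K))=(\rc K)^{\circ}$.

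For the ``in particular'': since $\relint(\bc K)\subset\nc(K)\subset\cl(\bc K)=\cl(\nc K)$, the set $\ol\nc(K)=\nc(K)\cap\S^{n-1}$ lies between the relative interior and the closure of the spherically convex set $\cl(\nc K)\cap\S^{n-1}$, whose relative boundary in $\S^{n-1}$ has $(n-1)$-dimensional Hausdorff measure zero; therefore $\tau(K)$, the $(n-1)$-dimensional Hausdorff measure of $\ol\nc(K)$, equals that of $\cl(\nc K)\cap\S^{n-1}$, which in turn equals $n$ times the $n$-dimensional volume of $\cl(\nc K)\cap\mathbf{B}^n$ (integrate in polar coordinates over the cone). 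The map $C\mapsto C\cap\mathbf{B}^n$ is continuous from $\C^n$ (bounded-Hausdorff topology) to compact convex subsets of $\mathbf{B}^n$ (Hausdorff metric) essentially by definition, and Lebesgue volume is continuous under Hausdorff convergence of convex bodies — upper semicontinuity by thickening to $\varepsilon$-neighborhoods, lower semicontinuity by inscribing simplices with convergent vertices. Composing, $K\mapsto\tau(K)$ is continuous.

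The step that requires the most care is the identity $\cl(\nc(K))=(\rc K)^{\circ}$ of the first paragraph: $\nc(K)$ is in general neither convex nor closed, so the real content is that passing to the closure erases this defect and leaves a cone depending on $K$ only through $\rc(K)$. Once that is in hand the role of the asymptotic topology becomes transparent — and it is clear why the weaker bounded-Hausdorff topology would not suffice — since it is precisely the $\rc$-term in $d_a$ that renders $K\mapsto\rc(K)$, and hence $K\mapsto\cl(\nc(K))$ and $\tau$, continuous.
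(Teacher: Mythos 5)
Your proof is correct and follows the same route as the paper's: you establish the key identity $\cl(\nc(K))=\rc(K)^*$ via the barrier cone and the bipolar theorem, and reduce the claim to continuity of the polar-cone map under the bounded-Hausdorff topology, noting that the $\rc$-term in $d_a$ makes $K\mapsto\rc(K)$ continuous by design. The difference is that you make two outsourced steps self-contained. For the polarity step, where the paper simply cites \cite[11.35(b)]{rw}, you give a clean direct argument via Moreau's conic decomposition, turning $\dist(\cdot,C^*)=\|\Pi_C(\cdot)\|$ into pointwise convergence of distance functions, which is exactly bounded-Hausdorff convergence; this is a nice, elementary replacement for the citation. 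For the ``in particular'' clause, which the paper leaves implicit, you correctly observe that $\ol\nc(K)$ and $\cl(\nc K)\cap\S^{n-1}$ differ by a set of $(n-1)$-measure zero, rewrite $\tau(K)=n\,\mathrm{vol}_n\bigl(\cl(\nc K)\cap\mathbf{B}^n\bigr)$ by polar integration, and invoke continuity of volume on convex bodies under Hausdorff convergence. Both fill-ins are sound; the substance and structure of the argument coincide with the paper's.
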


Note that the term \emph{asymptotically continuous}, which we will be using again, is short for ``continuous with respect to the asymptotic topology".
To prove the above result we  need to reveal the relation between $\rc(K)$ and $\nc(K)$. These cones are linked via the \emph{barrier cone}, $\bc(K)$, which is a convex cone defined as  the set of all vectors $v\in\R^n$ such that  $\sup_{K}\l v, \cdot\r<\infty$. It turns out that $\rc(K)$ is the \emph{polar cone} of $\bc(K)$, $\bc(K)^*=\rc(K)$,  \cite[p. 123, Cor. 14.2.1]{rockafellar}. If $C$ is a convex cone, then $C^*$ consists of all $x^*\in\R^n$ such that $\l x^*,x\r\leq 0$ for all $x\in C$. In particular, if $o\in C$, then  $C^*=\nc(C)$. It is well-known that $C^{**}=\cl(C)$ \cite[p. 125]{rockafellar},  which  yields 
$$
\rc(K)^*=\bc(K)^{**}=\cl\big(\bc(K)\big).
$$
Further $\bc(K)$ is also related to $\nc(K)$: Wu \cite[p. 283]{wu:spherical} shows that,
$
\relint\big(\ol\bc(K)\big)\subset\ol \nc(K)\subset \cl\big(\ol\bc(K)\big).
$
But, since $\ol\bc(K)$ is a convex spherical set,  $\cl(\relint(\ol\bc (K)))=\cl(\ol\bc (K))$ \cite[Lemma 3]{wu:spherical}. Thus  
$$\cl\big(\bc(K)\big)=\cl\big(\nc(K)\big).$$
The last two displayed expressions now yield:
\begin{equation}\label{eq:clbc}
\rc(K)^*=\cl\big(\nc(K)\big).
\end{equation}
Equipped with this fact, we are  ready to prove the last proposition:

\begin{proof}[Proof of Proposition \ref{lem:olnc}]
Suppose that  there exists a sequence $K_i\in\K^n$ such that $K_i\overset{a}{\to} K\in\K^n$. Then we have to show that $\cl(\nc(K_i))\overset{a}{\to} \cl(\nc(K))$, which is equivalent to $\cl(\nc(K_i))\overset{bh}{\to} \cl(\nc(K))$, since we are dealing with cones. The latter convergence in turn can be rewritten as
$\rc(K_i)^*\overset{bh}{\to}\rc(K)^*$, by \eqref{eq:clbc}. So we just need  to check that
the polar cone mapping $\C^n\ni C\mapsto C^*\in\C^n$ is continuous with respect to the bounded-Hausdorff topology, which is a known fact, e.g., see \cite[11.35(b)]{rw}.
\end{proof}

\begin{note}
Another  topology which lies in between bounded-Hausdorff and Hausdorff is the \emph{cosmic topology} which is studied  by Rockafellar and Wets \cite{rw}; however, this topology does not entail the continuity of the recession cones, or total curvature. In particular the example of parabolas $y\geq tx^2$ mentioned earlier will converge to the upper half plane under the cosmic topology by \cite[Thm. 4.25(c)]{rw}.
\end{note}

\section{The Central  Direction}\label{sec:crd}
As we mentioned in the introduction, the first step in proving Theorems \ref{thm:main} and \ref{thm:main2} is to assign a certain  unit vector $u\in\S^{n-1}$ to each  $K\in\K^n$.  To find this vector  set $\ol{\rc(K)}:=\rc(K)\cap\S^{n-1}$.
   We call any  vector $u\in\ol{\rc(K)}$  a \emph{recession direction}, or simply a \emph{direction} of $K$. Further, we say that $K$ has \emph{balanced support} with respect to  some vector $u\in\S^{n-1}$ if: (i) $-u\in\nc(K)$, i.e., $K$ has a support hyperplane $\d H$ with outward normal $-u$; and (ii) $\d H\cap K$  contains half a line only when it contains the whole line.   If  in addition (iii) $u\in\ol\rc(K)$, 
   then we say that $u$ is a \emph{balanced direction} of $K$.
    It follows from a result of Wu \cite[Thm. 2]{wu:spherical} that each convex body $K\in\K^n_+$ has a balanced direction $u$. Here we prove the existence of $u$ for all $K\in\K^n$, and show that $u$ may be chosen canonically.
Let the \emph{central direction} of $K$  be  the normalized center of mass or average of the recession directions, i.e., set
$$
\cd(K):=\frac{\int_{\ol\rc(K)}x\,d\omega_{m-1}}{\|\int_{\ol\rc(K)}x\,d\omega_{m-1}\|},
$$
where $d\omega_{m-1}$ denotes the volume element of $\S^{m-1}$, and $m$ is the dimension of the affine hull of $\rc(K)$. The main result of this section is:

\begin{prop}\label{prop:direction}
Every $K\in\K^n$ has a well-defined central direction $\cd(K)$. Furthermore $cd(K)$ is balanced, and $\K^n\overset{\cd}{\longmapsto}\S^{n-1}$ is asymptotically continuous.
\end{prop}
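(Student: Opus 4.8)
The plan is to verify the three assertions in turn: (1) $\cd(K)$ is well-defined, i.e., the integral $\int_{\ol\rc(K)}x\,d\omega_{m-1}$ is nonzero; (2) $\cd(K)$ is a balanced direction; (3) the map $K\mapsto\cd(K)$ is continuous in the asymptotic topology.

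For (1), the key point is that $\rc(K)$ contains no line. Indeed, since $K\in\K^n$ we have (Lemma \ref{lem:rc4}) that $\ol K=K$, i.e.\ $K$ contains no line, and $\rc(K)$ cannot contain a line $\ell$ either — otherwise every point of $K$ would lie on an affine line parallel to $\ell$, making $\ell$'s direction part of the linearity space of $K$, contradicting that $K$ has trivial linearity space. So $\rc(K)$ is a closed convex cone containing no line, hence its relative interior in its affine hull $\aff(\rc(K))$, which passes through $o$ and has dimension $m\geq 1$, is a ``pointed'' cone. Then $\ol\rc(K)=\rc(K)\cap\S^{n-1}$ is a compact, geodesically convex subset of the great subsphere $\S^{m-1}=\aff(\rc(K))\cap\S^{n-1}$ that is contained in an open hemisphere of $\S^{m-1}$ (this is exactly the statement that a pointed cone lies strictly to one side of a hyperplane). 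For such a set the center of mass $\int_{\ol\rc(K)}x\,d\omega_{m-1}$, computed in $\aff(\rc(K))\cong\R^m$, is a nonzero vector lying in the same open hemisphere, so the normalization is legitimate and $\cd(K)\in\ol\rc(K)$; in particular $\cd(K)$ satisfies condition (iii).

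For (2), I need a support hyperplane of $K$ with outward normal $-\cd(K)$ satisfying the ``half-line only if whole line'' condition. Set $u=\cd(K)$. The natural candidate for $-u\in\nc(K)$ comes from the fact that $-u$ lies in the polar cone $\rc(K)^*=\cl(\nc(K))$: since $u$ is the normalized barycenter of $\ol\rc(K)$, one checks $\langle -u, x\rangle\leq 0$ for every $x\in\rc(K)$ — indeed $\langle -u, x\rangle = -\|{\textstyle\int} y\,d\omega\|^{-1}\langle \int y\,d\omega, x\rangle$, and for $x$ a recession direction this inner product is $\int\langle y,x\rangle\,d\omega\geq 0$ because all of $\ol\rc(K)$ and $x$ sit in a common open hemisphere. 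So $-u\in\rc(K)^*=\cl(\nc(K))$. To promote this to $-u\in\nc(K)$ (an actual outward normal, not just a limit of such) and to get property (ii), I would argue as in Wu \cite{wu:spherical}: among all support hyperplanes with outward normal close to $-u$, or by a direct extremality/compactness argument, the hyperplane $\d H=\{x:\langle x,-u\rangle=\sup_K\langle\cdot,-u\rangle\}$ supports $K$ (the sup is finite precisely because $-u\in\bc(K)$), and if $\d H\cap K$ contained a half-line in direction $v$ but not the line $-v$, then $v\in\rc(K)$ with $\langle v,u\rangle=0$, forcing $v$ to lie on the boundary of the hemisphere containing $\ol\rc(K)$; but the barycenter $u$ of a convex spherical set can be orthogonal to a point of the set only if the set is degenerate in a way that makes $-v$ a recession direction too, giving the whole line. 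This balancing-via-barycenter argument is the geometric heart of the proposition and is where I expect the main work to lie — one must handle carefully the low-dimensional degenerate cases (e.g.\ $m=1$, where $\ol\rc(K)$ is a single point and everything is immediate) and make the spherical-convexity estimates rigorous.

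For (3), asymptotic continuity: suppose $K_i\overset{a}{\to}K$ in $\K^n$. By definition $\rc(K_i)\overset{bh}{\to}\rc(K)$; since these are cones, this is the same as $\ol\rc(K_i)\to\ol\rc(K)$ as compact subsets of $\S^{n-1}$ (in the Hausdorff metric on the sphere). I would first show the dimension $m_i=\dim\aff(\rc(K_i))$ is eventually constant, equal to $m=\dim\aff(\rc(K))$: it cannot drop below $m$ by lower semicontinuity of dimension under Hausdorff convergence of convex sets, and it cannot exceed $m$ in the limit — actually $m_i$ could a priori be larger than $m$ (a wide cone degenerating to a ray), which is the one subtlety; but then $\ol\rc(K_i)$ would be spherically convex of dimension $m_i$ converging to a set of dimension $m<m_i$, and one shows the unit barycenter is still continuous across this, since the barycenter of a thin spherical sliver sits near the barycenter of its limiting lower-dimensional set. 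The clean way is: the (un-normalized) vector-valued functional $C\mapsto\int_{\ol C}x\,d\omega$ extended appropriately, or better, observe that $\cd(K)$ can be characterized as the unique unit vector maximizing $\int_{\ol\rc(K)}\langle x,\cdot\rangle\,d\omega$ over $\S^{n-1}$ — a quantity that, for each fixed direction, varies continuously with $\ol\rc(K)$ in the Hausdorff metric (integrals of a fixed continuous function over Hausdorff-convergent convex bodies of possibly varying dimension require care, but spherical convexity makes the measures converge weakly). Continuity of the argmax of a continuous, strictly concave-type functional then gives $\cd(K_i)\to\cd(K)$. I expect step (3)'s only real obstacle to be the dimension-jump issue; step (2) is the substantive one.
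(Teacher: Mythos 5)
Your step (1) contains a genuine error that invalidates the argument as written. You claim that ``since $K\in\K^n$ we have (Lemma \ref{lem:rc4}) that $\ol K=K$, i.e.\ $K$ contains no line.'' This misreads Lemma \ref{lem:rc4}: membership in $\K^n$ requires $\d K\cong\R^{n-1}$, which by that lemma is equivalent to $\ol K$ being unbounded, \emph{not} to the linearity space $L$ being trivial. The parabolic cylinder $K=\{(x,y,z):z\geq y^2\}\subset\R^3$ has $\d K\cong\R^2$, so $K\in\K^3$, yet $K$ contains every line parallel to the $x$-axis and $\rc(K)=\{(a,0,c):c\geq 0\}$ contains the whole $x$-axis. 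Hence your subsequent deduction — that $\rc(K)$ is pointed and $\ol\rc(K)$ lies in an \emph{open} hemisphere of $\S^{m-1}$ — fails: in the example $\ol\rc(K)$ is a closed half great circle, which sits in no open hemisphere. The paper's existence argument avoids this trap: Lemma \ref{lem:relbd} shows that $K\in\K^n$ is equivalent to $\rc(K)$ being \emph{relatively proper} (a strictly weaker condition than pointedness), and Lemma \ref{lem:cdcone} then produces a support hyperplane $\d H\ni o$ of the cone with $\ol C$ in the closed half-space and with $\l u,\cdot\r>0$ on a relatively open subset of $\ol C$, which is all that is needed to force $\int_{\ol C}x\,d\omega\neq o$. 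That argument survives the presence of lines.

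For step (2), your outline points in the right direction and the computation $-u\in\rc(K)^*=\cl(\nc(K))$ with finiteness of $\sup_K\l-u,\cdot\r$ is correct, but you defer the substantive point. The tool the paper uses is Lemma \ref{lem:sphere}: for a convex $X\subset\S^n$ and any $x_0\in X$, the reflection across the equator orthogonal to $x_0$ of the part of $X$ in the far hemisphere lands inside the near part of $X$. This gives $\l x_0,\int_{\ol C}x\,d\omega\r\geq 0$ for every $x_0\in\ol C$ (so $\ol C$ lies in the closed hemisphere centered at $\cd(C)$), and the equality case of that inequality is precisely what yields $\d H\cap C=-\d H\cap C$ — i.e.\ the ``half-line implies whole line'' property — rather than any appeal to Wu's theorem. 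Your concern about the low-dimensional cases ($m=1$) is handled automatically by this approach. On step (3), your worry about dimension jumps in the recession cone is legitimate and is in fact glossed over in the paper, which disposes of continuity in one sentence; your argmax characterization of $\cd$ is a reasonable way to make this rigorous, but it is not what the paper does.
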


Note that $\cd(K)$, if it is well defined, depends continuously on $\rc(K)$ which  depends continuously on $K$ with respect to the asymptotic topology. Thus  $\cd\colon\K^n\to\S^{n-1}$ would be asymptotically continuous.
We will complete the rest of the proof of the above proposition in two parts: first we check  that $\cd(K)$ is well-defined (Section \ref{subsec:crd1}), and then show that  $\cd(K)$ is balanced (Section \ref{subsec:crd2}). 

\subsection{Existence}\label{subsec:crd1}
Here we show that every $K\in\K^n$ has a  well-defined central direction, i.e., we check that $\int_{\ol\rc(K)}x\,d\omega_{m-1}\neq o$. To see this first observe that $\cd(K)=\cd(\rc(K))$, i.e., $K$ has a central direction if and only if its recession cone has a central direction. Then it is enough to show that the recession cone of every $K\in\K^n$ is relatively proper (Lemma \ref{lem:relbd}), and every relatively proper convex cone has a central direction (Lemma \ref{lem:cdcone}). These arguments require a simple observation:
 
  \begin{lem}\label{lem:projection}
 Let $K\subset\R^n$ be a convex body, $L\subset\R^n$ be a linear subspace, and $\pi\colon\R^n\to L$ be the orthogonal projection. Then
 $\rc(\pi(K))=\pi(\rc(K))$. 
   \end{lem}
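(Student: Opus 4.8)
\textbf{Proof proposal for Lemma \ref{lem:projection}.}

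The plan is to prove the two inclusions $\pi(\rc(K))\subset\rc(\pi(K))$ and $\rc(\pi(K))\subset\pi(\rc(K))$ separately, using the characterization of the recession cone from Lemma \ref{lem:rc1}: a unit vector $u$ lies in $\rc(A)$ if and only if some (equivalently every) point of $A$ is the source of a half-line in $A$ parallel to $u$. Since $\rc$ is invariant under nonzero scaling, it suffices to work with unit vectors and half-lines, or even more simply to use the defining property $A+v\subset A$ directly, which is the cleaner route here.

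For the inclusion $\pi(\rc(K))\subset\rc(\pi(K))$, let $v\in\rc(K)$, so $K+v\subset K$. Applying $\pi$ and using linearity, $\pi(K)+\pi(v)=\pi(K+v)\subset\pi(K)$, hence $\pi(v)\in\rc(\pi(K))$. This direction is immediate and requires no convexity or closedness beyond what is already built into $K$ being a convex body; I would state it in one line.

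For the reverse inclusion $\rc(\pi(K))\subset\pi(\rc(K))$, the argument is the substantive one. Let $w\in\rc(\pi(K))$; I may assume $w$ is a unit vector. Fix a point $p\in K$; then $\pi(p)\in\pi(K)$, and by Lemma \ref{lem:rc1} the half-line $\{\pi(p)+tw:t\geq 0\}$ lies in $\pi(K)$. For each integer $k\geq 1$, pick $q_k\in K$ with $\pi(q_k)=\pi(p)+kw$. Write $q_k=\pi(p)+kw+n_k$ where $n_k\in L^\perp$. Now consider the points $p+\tfrac1k(q_k-p)\in K$ (by convexity, since $K$ is convex and $p,q_k\in K$); these equal $p+w+\tfrac1k(n_k-(p-\pi(p)))$, so they converge to $p+w$ provided $\|n_k\|/k\to 0$. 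To secure this, I would instead run the standard recession-cone compactness argument: the points $r_k:=p+\tfrac1k(q_k-p)$ lie on the segment from $p$ to $q_k\subset K$, so $r_k\in K$; they satisfy $\pi(r_k)=\pi(p)+w$, a fixed point, so $r_k$ stays in the slab $\pi^{-1}(\{\pi(p)+w\})\cap K$. If this slab is bounded, then a subsequence of $r_k$ converges to some $r\in K$ with $\pi(r)=\pi(p)+w$, i.e. $r-p$ projects to $w$; one then checks $r-p\in\rc(K)$ by a further limiting argument ($K+\tfrac{t}{k}(q_k-p)\subset K$ in the limit along the subsequence gives $K+t(r-p)\subset K$ for all $t\geq 0$), and $\pi(r-p)=w$ completes the proof. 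If the slab is unbounded, it contains a line, whose direction vector $\ell$ lies in $\rc(K)\cap L^\perp$, so $\rc(K)+\mathbf{R}\ell\subset\rc(K)$; combining a point of $\rc(K)$ projecting near $w$ with this line fixes up the error.

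The main obstacle I anticipate is exactly this second inclusion: one cannot simply lift the half-line in $\pi(K)$ to a half-line in $K$ pointwise, because the preimages $q_k$ may wander off to infinity in the $L^\perp$ direction. The right fix is the compactness/limiting argument above, handling the two cases according to whether the relevant slab $\pi^{-1}(\text{point})\cap K$ is bounded; this is the place where one genuinely uses that $K$ is closed (so limits stay in $K$) together with Lemma \ref{lem:rc1}. Everything else is formal.
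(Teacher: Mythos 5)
Your first inclusion, $\pi(\rc(K))\subset\rc(\pi(K))$, via $K+v\subset K\Rightarrow\pi(K)+\pi(v)=\pi(K+v)\subset\pi(K)$, is clean and correct, and is actually more direct than the paper's argument for that direction (which routes through a monotonicity of $\rc$ under inclusion and the assertion that $\pi(\rc(K))$ is a closed cone; neither is free, since linear images of closed convex sets need not be closed). For the reverse inclusion the routes genuinely differ: the paper normalizes the lifts $v_t/\|v_t\|$ onto $\S^{n-1}$, extracts a convergent subsequence $\ol v$ by compactness, and shows $\ol v\in\rc(K)$ as a limit of the segments $ov_i$; you instead pin the projection by forming $r_k:=p+\tfrac1k(q_k-p)$, which stays in the slab $\pi^{-1}(\pi(p)+w)\cap K$, and case-split on whether that slab is bounded. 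Your bounded case is correct modulo the standard recession-cone compactness fact you invoke to get $r-p\in\rc(K)$.

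The unbounded case is a genuine gap, not a loose end. Concretely: an unbounded closed convex set contains a half-line, not necessarily a line, so the vector $\ell$ need not satisfy $-\ell\in\rc(K)$ and ``$\rc(K)+\R\ell\subset\rc(K)$'' is unjustified; and ``combining a point of $\rc(K)$ projecting near $w$'' presupposes the very thing you lack, since here the $r_k$ escape to infinity and you have produced no vector in $\rc(K)$ whose projection is close to $w$. Nor can this case be dismissed: take $K=\{(x,y):y\geq x^2\}\subset\R^2$ and $L=\R\times\{0\}$. Then $\rc(\pi(K))=L$ while $\pi(\rc(K))=\{o\}$, and with $p=o$, $w=(1,0)$ your slab is $\{1\}\times[1,\infty)$, unbounded, with nothing to fix up. (The same example breaks the corresponding line in the paper's proof, where $\pi(\ol v_i)$ is only a nonnegative multiple of $u$, and that multiple can tend to $0$.) The statement does hold under the extra hypothesis that $\ker\pi\cap\rc(K)$ is a linear subspace --- which is satisfied in every use the paper makes of this lemma, since there $\ker\pi$ is the linearity space of $K$ --- and your Case~2 is exactly where that hypothesis must enter; as written it does not go through.
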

   \begin{proof}
   We may suppose that $o\in K$. Then $\rc(K)\subset K$, and so $\pi(\rc(K))\subset \pi(K)$, which  implies that $\rc(\pi(\rc(K)))\subset \rc(\pi(K))$. But $\rc(\pi(\rc(K)))=\pi(\rc(K))$, since $\pi(\rc(K))$ is a closed cone. So we conclude that $\pi(\rc(K))\subset \rc(\pi(K))$. Next, we establish the reverse inclusion. This is trivial when $\rc(\pi(K))=o$; otherwise,  let $u\in\ol\rc(\pi(K))$, and consider the half-line $t u$ in $\pi(K)$, $t\geq 0$. Now for every $t>0$ let $v_t\in K$ be a point with $\pi(v_t)=t u$, and set $\ol v_t:=v_t/\|v_t\|$. Since $\S^{n-1}$ is compact, there exists a subsequence $\ol v_i:=\ol v_{t_i}$ which converges to $\ol v\in\S^{n-1}$. Then the half-line $t\ol v$ lies in $K$; because it is a limit of line segments $ov_i$ which lie in $K$ (since $K$ is convex). But $\pi(\ol v)=u$, since $\pi(\ol v_i)=u$. Thus $\pi(t\ol v)=tu$. We have shown then that any half-line $tu$ in $\pi(K)$ is the image under $\pi$ of a half-line $t\ol v$ in $K$. So $\rc(\pi(K))\subset\pi(\rc(K))$, which completes the proof.
   \end{proof}

Using the last lemma, we can now establish the following characterization. Recall that a subset of $\R^n$ is relatively proper if it is a proper subset of its affine hull.
 
\begin{lem}\label{lem:relbd}
Let $K\subset\R^n$ be an unbounded convex body. Then $K\in\K^n$, if and only if  $\rc(K)$ is  relatively proper.
\end{lem}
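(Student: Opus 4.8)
The plan is to observe that, for a convex cone, the condition ``relatively proper'' is nothing but ``not a linear subspace'', and then to match this against the reducibility criterion already recorded in Lemma \ref{lem:rc4}. First I would unwind the definitions: since $o\in\rc(K)$, the affine hull $\aff(\rc(K))$ is a linear subspace (the linear span of $\rc(K)$), and $\rc(K)=\aff(\rc(K))$ holds if and only if $\rc(K)$ is itself a linear subspace of $\R^n$. Hence $\rc(K)$ is relatively proper exactly when it is \emph{not} a linear subspace, and this is the reformulation I would work with.

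Next I would invoke Lemma \ref{lem:rc4} to write $K=\ol K+L$, where $L$ is the linearity space of $K$ and $\ol K=\pi(K)$ is its orthogonal projection into $L^\perp$; here $\pi\colon\R^n\to L^\perp$ is the orthogonal projection. By Lemma \ref{lem:projection}, $\rc(\ol K)=\rc(\pi(K))=\pi(\rc(K))$. I then claim that $\rc(K)$ is a linear subspace if and only if $\pi(\rc(K))=\{o\}$. Indeed, if $\pi(\rc(K))=\{o\}$ then $\rc(K)\subset\ker\pi=L$, and since $L\subset\rc(K)$ by the very definition of the linearity space, this forces $\rc(K)=L$, a linear subspace; conversely, if $\rc(K)$ is linear, then it is a linear subspace contained in $\rc(K)$, so by the maximality built into the notion of linearity space it coincides with $L$, whence $\pi(\rc(K))=\pi(L)=\{o\}$. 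Combining this with Lemma \ref{lem:rc1} (a convex body is bounded if and only if its recession cone is $\{o\}$) gives the equivalence: $\rc(K)$ is a linear subspace $\iff$ $\rc(\ol K)=\pi(\rc(K))=\{o\}$ $\iff$ $\ol K$ is bounded.

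Finally I would chain these with the last assertion of Lemma \ref{lem:rc4}, namely $K\in\K^n\iff\ol K$ is unbounded, to obtain
$$
\rc(K)\text{ not relatively proper}\iff\rc(K)\text{ linear}\iff\ol K\text{ bounded}\iff K\notin\K^n,
$$
and negating yields the lemma. (The degenerate case $K=\R^n$, not covered by Lemma \ref{lem:rc4}, is immediate: then $\partial K=\emptyset$ so $K\notin\K^n$, while $\rc(K)=\R^n$ is a linear subspace, hence not relatively proper.) I do not expect a serious obstacle: the argument is essentially careful bookkeeping of Lemmas \ref{lem:rc1}, \ref{lem:rc4}, and \ref{lem:projection}, and the only step that needs genuine attention is the equivalence ``$\rc(K)$ linear $\iff\pi(\rc(K))=\{o\}$'', which rests squarely on the maximality of the linearity space $L$ inside $\rc(K)$.
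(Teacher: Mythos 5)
Your argument is correct and follows essentially the same route as the paper: decompose $K=\ol K+L$ via Lemma \ref{lem:rc4}, push the recession cone through $\pi$ via Lemma \ref{lem:projection}, and use Lemma \ref{lem:rc1} together with the maximality of $L$ to identify ``$\rc(K)$ relatively proper'' with ``$\rc(K)\not\subset L$'', i.e.\ ``$\ol K$ unbounded''. Your explicit handling of the degenerate case $K=\R^n$ (which Lemma \ref{lem:rc4} excludes) is a small but welcome extra check that the paper's proof leaves implicit.
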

\begin{proof}
 Recall that, by Lemma \ref{lem:rc4}, $K=\ol K+L$, where $L$ is the maximal linear subspace of $\rc(K)$, and $\ol K$ is the image of $K$ under the orthogonal projection $\pi\colon\R^n\to L^\perp$. By Lemma \ref{lem:projection}, $\rc(\ol K)=\rc(\pi(K))=\pi(\rc(K))$. So, by Lemma \ref{lem:rc1}, $\ol K$ is unbounded if and only if $\pi(\rc(K))\neq o$, or  $\rc(K)\not\subset L$, which means that $\rc(K)$ is relatively proper. Further, again by Lemma \ref{lem:rc4}, $\ol K$ is unbounded if and only if $K\in\K^n$. So we conclude that $\rc(K)$ is relatively proper if and only if $K\in\K^n$.
\end{proof}

Now recall  that $\cd(K)=\cd(\rc(K))$, and, by Lemma \ref{lem:relbd}, $\rc(K)$ is relatively proper when $K\in\K^n$. Thus to show that $\cd(K)$ is well-defined, it suffices to establish:

\begin{lem}\label{lem:cdcone}
Every relatively proper convex cone $C\subset\R^n$ has a central direction.
\end{lem}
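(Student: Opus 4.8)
The plan is to reduce to the full-dimensional case, exhibit a closed half-space through the origin containing $C$, and then pair the defining integral of $\cd(C)$ against the inward normal of that half-space. Since $C$ is a cone, the line joining any $x\in C$ to $2x\in C$ is $\R x$ and passes through $o$, so the affine hull $V:=\aff(C)$ is in fact a linear subspace, of dimension $m$ say, and $C$ is full-dimensional inside $V\cong\R^m$ (its relative interior, which coincides with its interior in $V$, is nonempty). That $C$ is relatively proper means precisely $C\neq V$. Identifying $V$ with $\R^m$, we have $\ol C:=C\cap\S^{n-1}=C\cap\S^{m-1}$, and $\int_{\ol C}x\,d\omega_{m-1}$ is a vector of $V$.

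Next I would show there is a unit vector $w\in V$ with $\langle x,w\rangle\geq 0$ for all $x\in C$. Here $\inte(C)$ (interior taken in $V$) is a nonempty open convex cone, and it cannot be all of $\R^m$, for otherwise $C=\R^m$. Separating a point of $\R^m\setminus\inte(C)$ from $\inte(C)$ by a hyperplane produces a nonzero $v\in\R^m$ that is bounded above on $\inte(C)$; since $\inte(C)$ is a cone, any linear functional bounded above on it must be $\leq 0$ on it, and hence, since $\inte(C)$ is dense in $\cl(C)\supseteq C$, on all of $C$. Taking $w:=-v/\|v\|$ gives the claim. (Equivalently: replace $C$ by $\cl(C)$, a closed convex cone distinct from $\R^m$, and take a support hyperplane at its boundary point $o$, which lies on $\partial\cl(C)$ because $o\in\inte(\cl(C))$ would again force $\cl(C)=\R^m$.)

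Finally, $\langle\int_{\ol C}x\,d\omega_{m-1},\,w\rangle=\int_{\ol C}\langle x,w\rangle\,d\omega_{m-1}\geq 0$, and I claim this is strictly positive, which forces $\int_{\ol C}x\,d\omega_{m-1}\neq o$ and hence that $\cd(C)$ is well defined. Indeed $\ol C$ contains the nonempty open subset $\inte(C)\cap\S^{m-1}$ of $\S^{m-1}$, so $\ol C$ has positive $\omega_{m-1}$-measure, while the real-analytic function $x\mapsto\langle x,w\rangle$ is not identically zero on $\S^{m-1}$ (for $m\geq 2$) and so vanishes on an $\omega_{m-1}$-null set; thus the nonnegative integrand is positive on a subset of $\ol C$ of positive measure. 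The low-dimensional cases are immediate: $m=0$ would mean $C=\{o\}$, which is not relatively proper, and for $m=1$ the set $\ol C$ is a single point of $\S^0$. The only steps that call for a little care are the separation argument — making sure the half-space passes through $o$ and accommodating a possibly non-closed $C$ — and the final non-vanishing observation; neither, however, presents a genuine obstacle.
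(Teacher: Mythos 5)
Your proof is correct and follows the same route as the paper: reduce to the full-dimensional case in the (linear) affine hull, produce a supporting half-space through the origin $o\in\relbd(C)$, and pair its inward normal against $\int_{\ol C}x\,d\omega_{m-1}$. The only cosmetic difference is in the final positivity step, where the paper simply observes that the nonnegative height function is strictly positive on an open subset of $\ol C$ (since $\ol C$ has interior in $\S^{m-1}$ while $\d H\cap\S^{m-1}$ does not), whereas you invoke the measure-zero vanishing locus of a nonconstant analytic function on $\S^{m-1}$ — both are valid.
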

\begin{proof}
Note that $\rc(C)=C$ and set $\ol C:=\ol\rc(C)=C\cap \S^{n-1}$.
Suppose that $\dim(C)=m$. Then, after a rotation, we may assume that $C$ lies in the space of the first $m$ coordinates $\R^m\times \{o_{n-m}\}\subset\R^n$, which we identify with $\R^m$. Consequently $\ol C$ lies in $\S^{m-1}$ and has interior points there. By Lemma \ref{lem:relbd}, $o\in\relbd(C)$, the relative boundary of $C$, for otherwise, we would have $o\in\relint(C)$, the relative interior of $C$, which, since $C$ is a cone would  imply that $C$ fills its affine hull and so is not relatively proper. Thus, since $C$ is convex, it follows that there exists a supporting hyperplane $\d H\subset\R^m$ of $C$ which passes through $o$. Since $\ol C$ has interior points in $\S^{m-1}$, it cannot lie entirely in $\d H$, and consequently we can choose a unit vector $u\in\S^{m-1}$ which is orthogonal to $\d H$ and points towards the side of $\d H$ in $\R^m$ where $\ol C$ lies.
Then the height function $\l u, \cdot\r\geq 0$ on $\ol C$ and $\l u, \cdot\r> 0$ on a subset of $\ol C$ which is open in $\S^{m-1}$. Thus 
$$
0<\int_{\ol C}\l u,x\r\,d\omega_{m-1}=\left\l u,\int_{\ol C}x\,d\omega_{m-1}\right\r,
$$
which shows that $\int_{\ol C}x\,d\omega_{m-1}\neq o$, and consequently $\cd(C)$ is well-defined.
\end{proof}

\subsection{Balance}\label{subsec:crd2}
To complete the proof of Proposition \ref{prop:direction}, it remains only to check that $\cd(K)$ is balanced. Once again we reduce this claim to a corresponding statement about recession cones, Lemma \ref{lem:proper}, which states that the central direction of a relatively proper convex cone is always balanced.  To establish this fact, we need a basic property of  spherical sets described in the next lemma.
Here a  set $X\subset\S^n$ is \emph{convex} provided that every pair of points of $X$ may be joined by a distance minimizing geodesic segment (or piece of a great circle) which is contained in $X$. Note that $X$ is convex if and only if the cone over $X$, i.e., the set of all half-lines $\lambda x$, where $x\in X$ and $\lambda\geq 0$, is convex and is not a line. The following basic fact refines an earlier observation about convex spherical sets  \cite[Prop. 2.1]{cgr}.

\begin{lem}\label{lem:sphere}
Let $X\subset\S^n$ be  convex,  $x_0\in X$, and  $X_+$, $X_-$ be the subsets of $X$ where $\l x_0, \cdot\r\geq 0$ and $\leq 0$ respectively. Further let $X_-'$ be the reflection of $X_-$ with respect to the hyperplane orthogonal to  $x_0$ which passes through the origin. Then $X_-'\subset X_+$.
\end{lem}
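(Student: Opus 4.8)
The plan is to work in the cone picture rather than directly on the sphere, since $X\subset\S^n$ is spherically convex if and only if the cone $C:=\{\lambda x : x\in X,\ \lambda\ge 0\}$ is a convex cone that is not a line. First I would set up coordinates so that $x_0=e_{n+1}$ is the last standard basis vector, and let $P=\{\,\l x_0,\cdot\r=0\,\}$ be the hyperplane through the origin orthogonal to $x_0$, with reflection $R\colon\R^{n+1}\to\R^{n+1}$ fixing $P$ and sending $x_0\mapsto -x_0$. The claim $X_-'\subset X_+$ translates to: if $x\in C$ and $\l x_0,x\r\le 0$, then $Rx\in C$. So the whole statement reduces to showing $R(C_-)\subset C$, where $C_-=\{x\in C:\l x_0,x\r\le 0\}$.

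The key step is to realize $Rx$ as a point of $C$ built from $x$ and $x_0$ using convexity of the cone. Write any $x\in C_-$ as $x = x' - t x_0$ where $x'\in P$ is the orthogonal projection of $x$ onto $P$ and $t=-\l x_0,x\r\ge 0$. Then $Rx = x' + t x_0$. The point is that both $x_0\in C$ (since $x_0\in X$) and the ``horizontal part'' direction lies in $C$: indeed, consider the point $x + 2t x_0 = x' + t x_0 = Rx$. This is a nonnegative combination? Not immediately — $x+2tx_0$ is $x$ plus a nonnegative multiple of $x_0$, but a convex \emph{cone} need not be closed under adding $x_0$. So instead I would argue as follows: since $x\in C$ and $x_0\in C$ and $C$ is convex, the segment from $x$ to any positive multiple $\mu x_0$ lies in $C$, hence so does the half-line from $x$ through $\mu x_0$ if it stays in $C$ — but that is again what we want. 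The correct route is: the midpoint $\tfrac12(x + 2t x_0)$; we need $2t x_0\in C$, which holds since $C$ is a cone and $x_0\in C$, and we need $x\in C$, which holds; therefore $\tfrac12(x+2tx_0)\in C$ by convexity, and then $x + 2t x_0\in C$ since $C$ is a cone (scale by $2$). But $x+2tx_0 = x'-tx_0+2tx_0 = x'+tx_0 = Rx$. Hence $Rx\in C$. Finally $Rx\ne o$ unless $x=o$ (reflection is an isometry), and $\|Rx\|=\|x\|$, so $Rx/\|Rx\|\in X$ whenever $x/\|x\|\in X$; applying this to $x\in C_-$ with $\|x\|=1$ gives exactly $X_-'\subset X_+$, because $\l x_0, Rx\r = -\l x_0,x\r\ge 0$ places $Rx$ in $X_+$.

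I would then spell out the degenerate cases: if $x\in X_-$ with $\l x_0,x\r = 0$ then $Rx=x$ and there is nothing to prove; and the hypothesis that $C$ is not a line is used only to guarantee $X$ (hence $X_+$) is genuinely spherically convex and nonempty in the relevant sense — it is not needed for the inclusion itself, which is purely about the cone being convex and containing $x_0$. I would also remark that the argument shows slightly more: the reflection $R$ maps $C_-$ into $C_+:=\{x\in C:\l x_0,x\r\ge 0\}$, i.e. $R(C_-)\subseteq C_+$, which is the cone version of $X_-'\subset X_+$.

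The main obstacle is purely expository: making sure the cone manipulation ``$x\in C$, $x_0\in C$, $C$ convex cone $\Rightarrow x + 2t x_0\in C$'' is presented cleanly, since the naive ``$C$ closed under adding $x_0$'' is false and one must route through the midpoint and then rescale. Once that lemma-within-the-lemma is stated correctly, everything else is a one-line coordinate computation identifying $x+2tx_0$ with $Rx$. There is no analytic difficulty and no need for compactness or limits here — unlike the projection lemma, this is elementary convex geometry.
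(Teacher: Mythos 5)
Your proof is correct, and it takes a genuinely different route from the paper's. The paper argues directly on the sphere: given $y\in X_-$ in the open southern hemisphere and $y\neq -e_n$, it takes the unique minimizing geodesic $\Gamma$ from $y$ to $e_n=x_0$, locates the equatorial crossing point $m$, observes that the great circle through $y, m, e_n$ is invariant under the reflection $R$ because $\Pi=\operatorname{span}\{e_n, m\}$ is $R$-invariant, and concludes $y'\in\Gamma\subset X$ by comparing arc lengths; the degenerate cases $y\in E$ and $y=-e_n$ are handled separately. You instead pass to the convex cone $C$ over $X$, write $Rx = x + 2t x_0$ with $t=-\langle x_0,x\rangle\ge 0$, and deduce $Rx\in C$ by forming the midpoint of $x$ and $2tx_0$ (both in $C$) and rescaling. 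Your version is more algebraic, handles all the degenerate cases uniformly ($t=0$ gives $Rx=x$; $x$ a negative multiple of $x_0$ gives $Rx$ a positive multiple), and correctly notes that the ``not a line'' clause is not actually used for the inclusion.

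One small conceptual remark: you worry that ``a convex cone need not be closed under adding $x_0$,'' but in fact a convex cone \emph{is} closed under addition of any two of its elements — the midpoint-then-rescale step you carry out is precisely the standard proof of that closure property. So your caution is unfounded, though it does no harm; the argument you end up writing is the right one, and you could shorten the exposition by simply invoking additivity of convex cones directly.
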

\begin{proof}
The case $n=1$ is  trivial. So let us assume that $n\geq 2$. Further
we may assume, for convenience, that $x_0$ is the ``north pole" $e_n:=(0,\dots,0,1)$. Let $\S^n_+$ and $\S^n_-$ denote, respectively, the ``northern" and ``southern" hemispheres of $\S^n$, i.e., collection of  $x\in \S^n$ where $\l x, e_n\r\geq 0$ or $\leq 0$ respectively. Also let $E:=\S^n_+\cap \S^n_-$ denote the ``equator". Now let
$y\in X_-$. We have to show that $y'\in X_+$, where $y'$ is the reflection of $y$ with respect to the hyperplane of the first $n$-coordinates. If $y\in E$, then $y'=y\in E\cap X_-=E\cap X_+\subset X_+$ and we are done. Further, if $y=-e_n$ then $y'=e_n\in X_+$ and again we are done. So suppose that $y$ lies in the interior of $\S^n_-$, and is different from $-e_n$. Then there exists a unique geodesic segment $\Gamma$ joining $y$ and $e_n$, see Figure \ref{fig:sphere}.
\begin{figure}[h]
   \centering
   \begin{overpic}[width=1.25in]{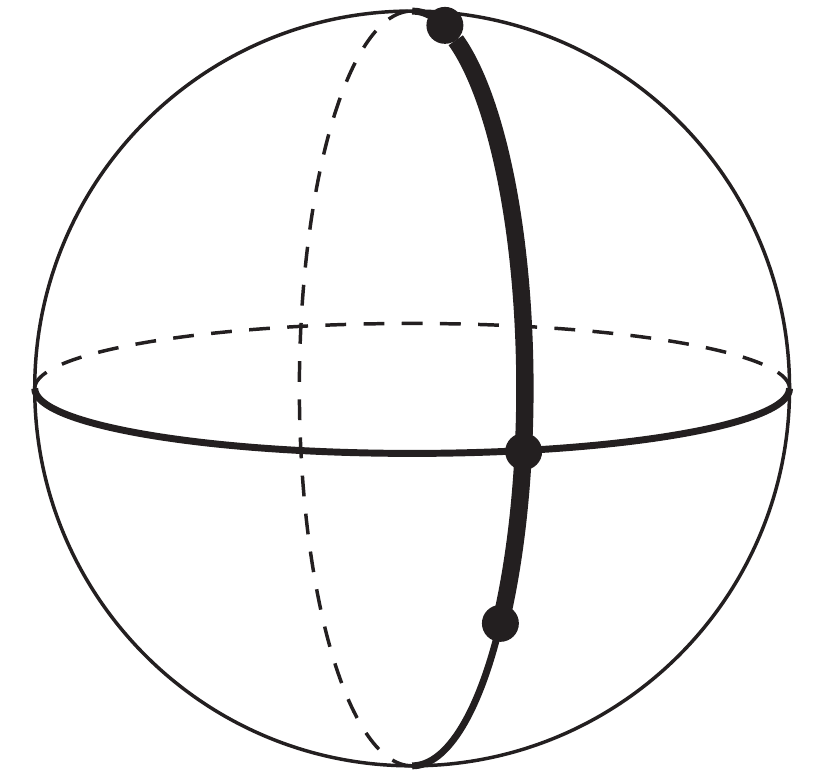} 
   \end{overpic}
   \put(-48,40){$m$}
   \put(-32,12){$y$}
   \put(-52,90){$e_n$}
   \put(-30,62){$\Gamma$}
   \put(-100,41){$E$}
   \caption{}
   \label{fig:sphere}
\end{figure}
Also note that, since $y$ and $e_n$ lie in the interior of opposite hemispheres, there exists a point $m$ of $\Gamma$ strictly between $y$ and $e_n$ which belongs to $E$. Since $\Gamma$ is a piece of a great circle $C$, it lies in the intersection of $\S^n$ with the two dimensional plane $\Pi$ containing  $e_n$, $m$, and the origin $o$ ($\Gamma\subset C:=\Pi\cap \S^n$). Since $o'=o$, $m'=m$ and $e_n'=-e_n\in \Pi$, it follows that $\Pi'=\Pi$. So $C'=C$, which shows that $y'\in C$.  Further, since $y$ lies in the shorter of  the two segments of $C$ between $-e_n$ and $m$,  $y'$ lies  in the shorter of the two segments of $C$ between $m'=m$ and $-e_n'=e_n$. So $y'\in \Gamma$, which, since $X$ is convex, implies that $y'\in X$. Of course, since $y\in\S^n_-$, we also have $y'\in\S^n_+$. So $y'\in \S^n_+\cap X=X_+$.
\end{proof}

Using the above lemma we can now show:

\begin{lem}\label{lem:proper}
If $C\subset\R^n$ is a relatively proper convex cone, then $\cd(C)$ is balanced.
\end{lem}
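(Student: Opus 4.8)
The plan is to reduce the three defining conditions of a balanced direction to statements about the central direction $u:=\cd(C)$ and then verify each using Lemma \ref{lem:sphere}. First I would set up coordinates as in the proof of Lemma \ref{lem:cdcone}: let $m=\dim(C)$, rotate so that $C\subset\R^m\times\{o_{n-m}\}$, write $\ol C:=C\cap\S^{m-1}$, and recall that $\int_{\ol C}x\,d\omega_{m-1}\neq o$, so $u$ is well-defined and lies in $\S^{m-1}$. Condition (iii), namely $u\in\ol\rc(C)=\ol C$, is really the crux: a priori the normalized center of mass of a spherical set need not lie in that set. This is exactly where Lemma \ref{lem:sphere} enters.

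To prove $u\in\ol C$, I would argue by contradiction. Since $\ol C$ is convex and closed in $\S^{m-1}$, if $u\notin\ol C$ there is a support hyperplane through $o$ separating $u$ from $\ol C$; equivalently there is a unit vector $x_0\in\S^{m-1}$ with $\l x_0,x\r\geq 0$ for all $x\in\ol C$ but $\l x_0,u\r<0$. Now apply Lemma \ref{lem:sphere} to $X=\ol C$ (extended trivially to $\S^m$ if one wants the exact hypotheses, or noting the lemma's proof works verbatim in $\S^{m-1}$) with this $x_0$: since $\l x_0,\cdot\r\geq 0$ on all of $\ol C$, we have $X_-=\ol C\cap x_0^\perp$, its reflection $X_-'=X_-$, and the lemma gives nothing new — so instead I would use the lemma in the form that directly controls the mass. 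The cleaner route: decompose $\ol C=X_+\cup X_-$ relative to some test hyperplane and use the lemma to show that the reflected copy of the ``negative part'' injects into the ``positive part,'' which forces $\l x_0,\int_{\ol C}x\,d\omega_{m-1}\r\ge 0$ whenever $\l x_0,\cdot\r\ge 0$ on $\ol C$; hence $\l x_0,u\r\ge 0$, contradicting $\l x_0,u\r<0$. Thus $u\in\ol C$, giving (iii). (If $\l x_0,\cdot\r$ is merely $\ge 0$ on $\ol C$ the conclusion $\l x_0,u\r\ge 0$ is immediate anyway since the integrand is nonnegative; the lemma is needed for the genuinely two-sided situation, which is what appears when establishing (ii).)

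Next, condition (i): I must produce a support hyperplane $\d H$ of $C$ with outward normal $-u$, i.e.\ $-u\in\nc(C)$, i.e.\ $\l u,x\r\ge 0$ for all $x\in C$. Since $C$ is a cone it suffices to check this on $\ol C$, and it follows from $u\in\ol C$ together with convexity of $\ol C$: indeed take any $x\in\ol C$; the geodesic from $u$ to $x$ stays in $\ol C\subset\S^{m-1}$, and $\ol C$ convex spherical implies $\ol C$ lies in a closed hemisphere centered at any of its points only if... — more directly, $u\in\ol C$ and $\ol C$ convex imply $\ol C$ is contained in the closed hemisphere $\{\l u,\cdot\r\ge 0\}$ is generally false, so instead I should obtain (i) from a separation argument: $o\in\relbd(C)$ (shown in Lemma \ref{lem:cdcone}), so there is a support hyperplane $\d H$ at $o$; its outward normal $w$ satisfies $\l w,\cdot\r\le 0$ on $C$, and by the construction of $u$ in Lemma \ref{lem:cdcone} one may take $w=-u$ after checking the center of mass lies strictly on the correct side — which is precisely the inequality $0<\l u,\int_{\ol C}x\r$ already proved there. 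Then $\d H$ is a support hyperplane with outward normal $-u$, giving (i). Finally, for (ii) I would show $\d H\cap C$ contains a full line whenever it contains a half-line: a point $v$ of $\d H\cap C$ that is the source of a half-line parallel to some $\xi\in\ol C$ satisfies $\l u,\xi\r=0$ (since $\xi\in\ol C$ gives $\l u,\xi\r\ge0$, and $v+\R_{\ge0}\xi\subset\d H$ gives $\l u,\xi\r\le 0$), so $\xi\in\ol C\cap u^\perp$; applying Lemma \ref{lem:sphere} with $x_0=u$ shows this ``equatorial'' part is symmetric under $\xi\mapsto-\xi$, hence $-\xi\in\ol C$, hence the whole line $v+\R\xi\subset C\cap\d H$.

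The main obstacle is condition (iii), $u\in\ol C$ — the nontrivial closure-type fact that the normalized barycenter of a convex spherical set lies back inside it. Everything else ((i) and (ii)) then falls out of the strict inequality $\l u,\int_{\ol C}x\,d\omega_{m-1}\r>0$ packaged into Lemma \ref{lem:cdcone} plus the reflection symmetry of Lemma \ref{lem:sphere}; the care needed is only in bookkeeping the dimension drop to $\S^{m-1}$ and in the separation argument identifying the support normal at $o$ with $-u$.
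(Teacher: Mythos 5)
Your proposal has a genuine gap in the argument for condition (i), and a related misapplication of Lemma~\ref{lem:sphere} in (ii), and it also misidentifies which condition Lemma~\ref{lem:sphere} is really for. You treat (iii) (that $\cd(C)\in\ol C$) as the crux needing the spherical reflection lemma, then correctly observe in your parenthetical that it does not: separation at $o$ plus nonnegativity of $\l x_0,\cdot\r$ on $\ol C$ already forces $\l x_0,\cd(C)\r\ge 0$, and $\cd(C)\in\ol C$ follows (the barycenter of $\ol C$ lies in the closed convex cone $C$). So far so good. But for (i) you fall back on: ``$o\in\relbd(C)$ gives a supporting hyperplane with some normal $w$, and by Lemma~\ref{lem:cdcone} one may take $w=-\cd(C)$.'' That is precisely where the argument breaks. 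Lemma~\ref{lem:cdcone} only says that for \emph{some} choice of support normal $u$ at $o$ one has $\l u,\int_{\ol C}x\,d\omega\r>0$; it does not say the vector $\cd(C)$ is itself a support normal. The content of (i) is the assertion $\l x_0,\cd(C)\r\ge 0$ for \emph{every} $x_0\in\ol C$, and this is exactly the two-sided inequality where Lemma~\ref{lem:sphere} is indispensable: fix $x_0\in\ol C$, split $\ol C$ into the hemispheres $\ol C_\pm$ centered at $\pm x_0$, reflect $\ol C_-$ to $\ol C_-'$, and invoke $\ol C_-'\subset\ol C_+$ to conclude $\l x_0,\int_{\ol C}x\,d\omega\r=\int_{\ol C_+\setminus\ol C_-'}\l x_0,x\r\,d\omega\ge 0$. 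Your proposal never runs this argument for (i), so (i) is not established.

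Your (ii) then inherits this gap and adds a new one: you apply Lemma~\ref{lem:sphere} with $x_0=u=\cd(C)$ and claim this makes the ``equatorial'' set $\ol C\cap u^\perp$ symmetric under $\xi\mapsto-\xi$. It does not. With $x_0=u$, once (i) is known, $X_-=\ol C\cap u^\perp$ is fixed pointwise by reflection through $u^\perp$, so the lemma's conclusion $X_-'\subset X_+$ collapses to the tautology $\ol C\cap u^\perp\subset\ol C$; no antipodal symmetry is obtained. The correct move is to apply Lemma~\ref{lem:sphere} not at $u$ but at an arbitrary unit $x_0\in\d H\cap\ol C$: since $\d H\perp\cd(C)$, the same reflection computation now gives $\int_{\ol C_+\setminus\ol C_-'}\l x_0,x\r\,d\omega=\l x_0,\int_{\ol C}x\,d\omega\r=0$, forcing $\ol C_+=\ol C_-'$ and hence $-x_0\in\ol C\cap\d H$. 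Applying this to every unit $x_0\in\d H\cap C$ gives $\d H\cap C=-\d H\cap C$. So you need the reflection argument twice, once with $x_0$ ranging over all of $\ol C$ (for (i)) and once with $x_0$ ranging over $\d H\cap\ol C$ (for (ii)); applying it at $u$ itself yields nothing.
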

\begin{proof}
Recall that $\ol C:=C\cap \S^{n-1}$, and note that for any fixed $x_0\in \ol C$,
$$
\left\l x_0, \int_{\ol C}x\,d\omega_{m-1}\right\r=\int_{\ol C} \l x_0, x\r \,d\omega_{m-1}=\int_{\ol C_+} \l x_0, x\r \,d\omega_{m-1}+\int_{\ol C_-} \l x_0, x\r \,d\omega_{m-1},
$$
where $\ol C_+$ is the portion of $\ol C$ which is contained in the hemisphere centered at $x_0$ and $\ol C_-$ is the portion contained in the opposite hemisphere. So $\l x_0, x\r\geq 0$ on $\ol C_+$ and $\l x_0, x\r\leq 0$ on $\ol C_-$. Next note that
$$
\int_{\ol C_-} \l x_0, x\r \,d\omega_{m-1}=-\int_{\ol C_-'} \l x_0, x\r \,d\omega_{m-1},
$$
where $\ol C_-'$ denotes the reflection of $\ol C_-$ with respect the hyperplane orthogonal to $x_0$ which passes through the origin. But, since $C$ is a convex cone which is not a line, $\ol C$ is a convex subset of $\S^{n-1}$; therefore, $\ol C_-'\subset \ol C_+$ by Lemma \ref{lem:sphere}. So, 
$$
\left\l x_0, \int_{\ol C}x\,d\omega_{m-1}\right\r=\int_{\ol C_+-\ol C_-'} \l x_0, x\r \,d\omega_{m-1}\geq 0.
$$
This shows that $x_0$ and consequently $\ol C$ lie in the hemisphere centered at $\cd(C)$. So we conclude that the hyperplane $\d H$ which passes through the origin and is orthogonal to $\cd(C)$ supports $C$.

It remains only to check that $\d H\cap C=-\d H\cap C$. If $\d H\cap C=o$, then we are done; otherwise, 
there exists a unit vector $x_0\in \d H$ such that $\l x_0, \int_{\ol C}x\,d\omega_{m-1}\r=0$. Consequently the last displayed expression above implies that  $\ol C_+=\ol C_-'$. This in turn yields that $-x_0\in \d H\cap C$. Thus, since $\d H\cap C$ is a cone, it follows that $\d H\cap C=-\d H\cap C$.
\end{proof}

Recall that, by Lemma \ref{lem:relbd}, if $K\in\K^n$, then $\rc(K)$ is relatively proper. Furthermore $\cd(\rc(K))=\cd(K)$. Thus 
the last lemma shows that  $\cd(K)$ is a balanced direction of $\rc(K)$.
So  there exists a support hyperplane $\d H$ of $\rc(K)$ which is orthogonal to $\cd(K)$, and is balanced, i.e., $\d H\cap\rc(K)=-\d H\cap\rc(K)$. Now let $\d H'$ be the support hyperplane of $K$  which is orthogonal to $\cd(K)$. Then $\d H$ and $\d H'$ are parallel.
So, by Lemma \ref{lem:rc1}, $\d H'\cap K$ contains a half-line $\ell$ if and only if $\d H\cap\rc(K)$ contains a half-line parallel to $\ell$, which yields that $\cd(K)$ is a balanced direction of $K$. 

\begin{note}
The mapping $\K^n\ni K\overset{\cd}{\longmapsto} \cd(K)\in\S^{n-1}$ is not continuous with respect to the bounded-Hausdorff topology. To see this, let $K_t\subset\R^2$ be the family of convex bodies given by $x\leq t$ and $y\geq 0$. Then $\cd(K_t)=(-1,1)/\sqrt{2}$ for all $t$, see Figure \ref{fig:wedge}; however, $K_t\overset{bh}{\to} \mathbf{H}^2$, as $t\to\infty$, where $\mathbf{H}^2$ is the upper half plane, and $\cd(\mathbf{H}^2)=(0,1)$.
\begin{figure}[h]
\begin{center}
\begin{overpic}[width=4.5in ]{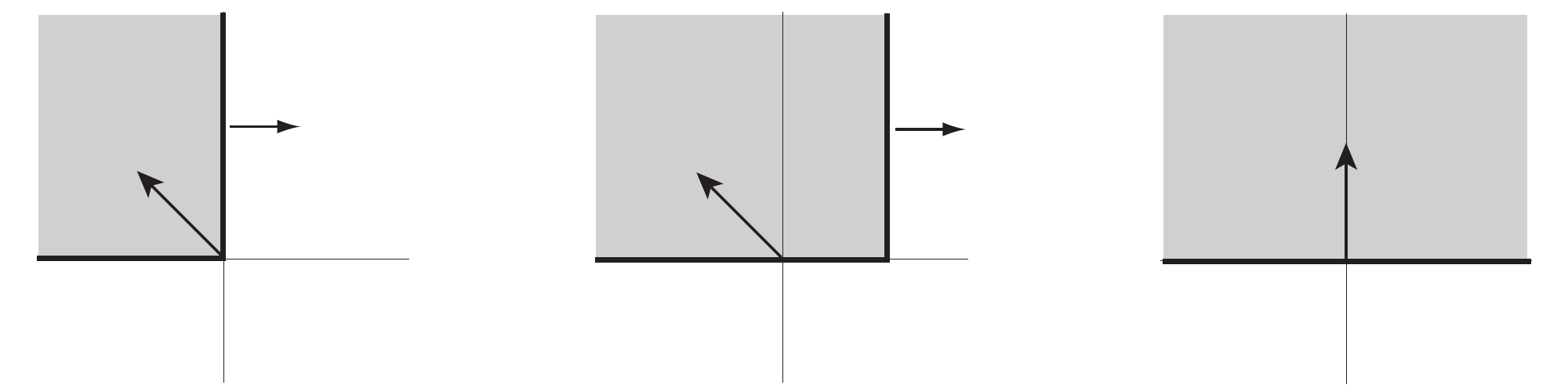}
\end{overpic}
\put(-265,10){$K_0$}
\put(-150,10){$K_1$}
\put(-32,10){$K_\infty$}
\caption{}
\label{fig:wedge}
\end{center}
\end{figure}
\end{note}

\section{Topology and Continuity of Minkowski Addition}\label{sec:continuous}

As we mentioned in the introduction, in general the Minkowski sum of a pair of convex bodies $K_0$, $K_1\in\K^n$ does not belong to $\K^n$. For instance the sum of a pair of closed half-spaces of $\R^n$ whose boundaries are not parallel is the whole $\R^n$. Furthermore, the Minkowski addition in not a continuous operation on the space of convex bodies, even with respect to the bounded Hausdorff topology (see Note \ref{note:cont}).
Here we will derive some conditions for Minkowski addition to operate properly on $\K^n$ and be continuous with respect to the asymptotic topology. For any direction $u\in\S^{n-1}$, let  $\K^n_u$ denote the collection of convex bodies $K\in\K^n$ 
which have balanced support with respect to $u$ (as we defined in Section \ref{sec:crd}).
Further, set $(\K^n_+)_u:= \K^n_+\cap \K^n_u$. We show that Minkowski addition acts continuously on this space:

\begin{prop}\label{prop:continuous}
For any pair of convex bodies $K_0$, $K_1\in (\K^n_+)_u$, $K_0+K_1\in (\K^n_+)_u$. Furthermore, the Minkowski addition $+\colon (\K^n_+)_u\times(\K^n_+)_u\to (\K^n_+)_u$ is asymptotically continuous.
\end{prop}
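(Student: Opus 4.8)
The plan is to split the statement into two halves: first show that $(\K^n_+)_u$ is closed under Minkowski addition, and then establish asymptotic continuity of the addition map. For the closure claim, I would begin by verifying the three conditions defining membership in $(\K^n_+)_u$ for $K_0 + K_1$. Condition (i), that $-u$ is an outward normal of a support hyperplane of $K_0+K_1$, follows from the standard fact that support functions add under Minkowski sum: if $\d H_j$ supports $K_j$ with outward normal $-u$, then the hyperplane with outward normal $-u$ at height $h_{K_0}(-u)+h_{K_1}(-u)$ supports $K_0+K_1$, and the contact set is $(K_0\cap \d H_0)+(K_1\cap\d H_1)$. Condition (ii), that this contact set contains a full line whenever it contains a half-line, I would deduce from Lemma \ref{lem:rc1}: a half-line in $(K_0\cap\d H_0)+(K_1\cap\d H_1)$ forces, via the recession-cone characterization, a half-line in one of the summands' contact sets lying in $\d H_j$, and balanced support of each $K_j$ upgrades this to a full line; adding back the other (nonempty) contact set keeps the line. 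Condition (iii) for the $\K^n_+$ part: I would use that $K_j\in\K^n_+$ means $K_j$ has a strictly convex point, equivalently (by Lemma \ref{lem:kpluscompact}, referenced in the text) $K_j$ contains no lines, so $\rc(K_j)$ has trivial linearity space; then $\rc(K_0+K_1)=\rc(K_0)+\rc(K_1)$ (another standard recession-cone identity, valid here because the relevant sums are closed — this needs the balanced/pointedness hypothesis) also contains no line, and one checks $K_0+K_1$ still has a strictly convex point by adding a strictly convex point of $K_0$ to a support point of $K_1$ with the same normal. Membership in $\K^n$ itself then follows from Lemma \ref{lem:rc4} together with Lemma \ref{lem:relbd}, since $\rc(K_0+K_1)$ is pointed hence relatively proper.

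For the continuity half, suppose $K_0^i\overset{a}{\to}K_0$ and $K_1^i\overset{a}{\to}K_1$ in $(\K^n_+)_u$. By definition of $d_a$ I must show both $K_0^i+K_1^i\overset{bh}{\to}K_0+K_1$ and $\rc(K_0^i+K_1^i)\overset{bh}{\to}\rc(K_0+K_1)$. The recession-cone part is the easier one: since $\rc(K_j^i)\overset{bh}{\to}\rc(K_j)$ by hypothesis, and Minkowski addition of \emph{cones} that share the balanced direction $u$ behaves continuously under bounded-Hausdorff convergence (the sum cones remain pointed and relatively proper, so no degeneration occurs), combined with the identity $\rc(K_0^i+K_1^i)=\rc(K_0^i)+\rc(K_1^i)$, we get the claim. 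The bounded-Hausdorff convergence of the sums themselves is where the real work lies. The obstruction, and the reason addition fails to be continuous in general, is that mass can escape to infinity in a direction transverse to $u$ when the bodies "open up"; the balanced-support hypothesis with respect to a common $u$ is precisely what rules this out. Concretely, I would fix a large ball $B_R$ and show $(K_0^i+K_1^i)\cap B_R \overset{h}{\to} (K_0+K_1)\cap B_R$. One inclusion (that limit points of the sums lie in $K_0+K_1$) is straightforward from $bh$-convergence of each factor. For the reverse, given a point $p=a+b$ with $a\in K_0$, $b\in K_1$, and $\|p\|\le R$, I need nearby points $a^i\in K_0^i$, $b^i\in K_1^i$ with $a^i+b^i$ close to $p$; the difficulty is that $a$ and $b$ themselves may have large norm even though $p$ does not. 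Here I would use the balanced support condition to control the "cancellation": because both $K_j$ have support hyperplane with outward normal $-u$ and these contact sets are line-free in the relevant sense, any representation $p=a+b$ with $\|a\|,\|b\|$ large must have $a,b$ nearly antipodal along the linearity directions of the contact sets, and one can trim $a$ and $b$ back along these directions (using that the recession cones, hence the contact-set lineality, converge) to bounded representatives $\tilde a, \tilde b$ with $\tilde a+\tilde b$ still close to $p$; then apply $bh$-convergence of $K_0^i,K_1^i$ to $\tilde a,\tilde b$.

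The main obstacle, as indicated, is this last reverse-inclusion argument: making precise the "trimming" of an unbounded representation $p = a+b$ to a bounded one, uniformly in $i$, using only the asymptotic (not Hausdorff) convergence of the factors plus the balanced-support structure. I expect this to require a compactness argument: if it failed, one would extract sequences $a^i\in K_0^i$, $b^i\in K_1^i$ with $a^i+b^i$ bounded but $\|a^i\|\to\infty$, pass to limits of $a^i/\|a^i\|$ to produce a recession direction $v$ of $\lim K_0$ with $-v$ a recession direction of $\lim K_1$; the balanced support of the limit bodies with respect to $u$, via Lemma \ref{lem:rc1} and the line-versus-half-line dichotomy, then forces $v$ and $-v$ both into the linearity spaces, so $a^i$ and $b^i$ can be translated back along a line inside each body — contradicting, after translation, the assumption that no bounded representative exists near $p$. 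I would organize the write-up so that this compactness/extraction lemma is isolated first, and then the two convergences $bh$ and $\rc$ follow cleanly.
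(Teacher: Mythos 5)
Your proposal follows the right outline (closure under addition, then continuity of the two pieces of $d_a$), and your continuity argument takes a genuinely different route from the paper, but there are two real gaps.

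First, \textbf{closedness of $K_0+K_1$ is never proved.} You acknowledge that the recession-cone identity $\rc(K_0+K_1)=\rc(K_0)+\rc(K_1)$ ``needs the balanced/pointedness hypothesis'' because the sum must be closed, but you never establish closedness -- and the paper explicitly flags this as non-automatic (Note \ref{note:closed}: the sum of $\{y\geq 1/(1-x^2)\}$ with its reflection is the open strip $-2<x<2$). The paper's device is to normalize $u=(0,\dots,0,1)$, translate so both summands lie in $\{x_n\geq 0\}$ and are supported by $\{x_n=0\}$, and then use the identity
\[
K\cap H_t=\bigl(K_0\cap H_t+K_1\cap H_t\bigr)\cap H_t,\qquad H_t:=\{x_n\leq t\},
\]
together with the compactness of $K_j\cap H_t$ (Lemma \ref{lem:technical}): compact plus compact is compact, so $K\cap H_t$ is compact for every $t$, so $K$ is closed. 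Without this step you have not shown $K_0+K_1$ is a convex body, so the rest of the closure argument (recession cones, pointedness, Lemma \ref{lem:relbd}) has nothing to attach to. You should either supply this argument or something equivalent.

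Second, \textbf{your extraction/compactness argument for the $bh$-convergence of the sums is morally sound but the final step is misstated.} Suppose $a^i\in K_0^i$, $b^i\in K_1^i$ with $a^i+b^i$ bounded and $\|a^i\|\to\infty$. Passing to subsequences, $a^i/\|a^i\|\to v\in\S^{n-1}$, and (since $a^i+b^i$ is bounded) $b^i/\|b^i\|\to -v$. Using $K_j^i\overset{bh}{\to}K_j$ and a fixed base point, the normalized rays do converge to recession directions, so $v\in\rc(K_0)$ and $-v\in\rc(K_1)$. But you then say this ``forces $v$ and $-v$ both into the linearity spaces, so $a^i$ and $b^i$ can be translated back along a line inside each body.'' That is the wrong conclusion: elements of $\K^n_+$ contain \emph{no} lines (Lemma \ref{lem:kpluscompact}), so their linearity spaces are trivial, and there is no line to translate along. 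The correct end of the argument is simply that for $K\in(\K^n_+)_u$ one has $\l w,u\r>0$ for all $w\in\rc(K)\setminus\{o\}$ -- this is exactly what the proof of Lemma \ref{lem:technical} establishes -- so $\l v,u\r>0$ and $\l -v,u\r>0$ cannot both hold, forcing $v=o$, a contradiction with $v\in\S^{n-1}$. With that fix the extraction argument works and gives the uniform boundedness you want, but it is a longer and more delicate route than the paper's. The paper instead reuses the same $H_t$-slice decomposition as in the closedness step: $K^i_j\cap H_t$ is compact by Lemma \ref{lem:technical}, $K^i_j\cap H_t\overset{h}{\to}K_j\cap H_t$, and then Lemma \ref{lem:X} (Hausdorff-continuity of addition of compact sets) finishes the job in two lines. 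For the recession cones, both you and the paper rely on $\rc(K_0^i+K_1^i)=\rc(K_0^i)+\rc(K_1^i)$ (Lemma \ref{lem:rcsum}) followed by the cone-addition continuity Lemma \ref{lem:conecont}; your description of that step is essentially correct, though you take the lemma as known rather than proving it.

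In short: the continuity half is a valid alternative route once the pointedness, not linearity, is used to close the contradiction; the closure half is genuinely incomplete without a proof that $K_0+K_1$ is closed, and that is the place where Lemma \ref{lem:technical} and the half-space slicing do essential work.
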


The proof of this result  requires a few lemmas. The first one below establishes a compact version of Proposition \ref{prop:continuous}. Note that the sum of a pair of compact sets is always compact, so the operation below is well-defined.

\begin{lem}\label{lem:X}
In the space $X$ of compact subsets of $\R^n$ the minkowski sum $+\colon X\times X\to X$ is continuous with respect to the Hausdorff topology.
\end{lem}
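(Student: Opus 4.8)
The plan is to prove Lemma \ref{lem:X} directly from the definition of the Hausdorff distance, showing that $d_h$ is, in a suitable sense, subadditive under Minkowski addition. Concretely, I would establish the single inequality
\[
d_h(A_0+A_1,\ B_0+B_1)\leq d_h(A_0,B_0)+d_h(A_1,B_1)
\]
for all compact sets $A_0,A_1,B_0,B_1\subset\R^n$. Continuity of $+\colon X\times X\to X$ follows immediately: if $A_i\overset{h}{\to}A$ and $B_i\overset{h}{\to}B$, then $d_h(A_i+B_i,A+B)\leq d_h(A_i,A)+d_h(B_i,B)\to 0$.

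To prove the displayed inequality, set $r_0:=d_h(A_0,B_0)$ and $r_1:=d_h(A_1,B_1)$, so that $A_0\subset (B_0)_{r_0}$, $B_0\subset (A_0)_{r_0}$, and similarly with indices $1$. The key elementary observation is the inclusion $S_r+T\subset (S+T)_r$ for any sets $S,T$ and any $r\geq 0$: if $x\in S_r$ then $x=s+v$ with $s\in S$ and $\|v\|\leq r$, so for any $t\in T$ we have $x+t=(s+t)+v\in(S+T)_r$. Applying this twice gives
\[
A_0+A_1\subset (B_0)_{r_0}+(B_1)_{r_1}\subset\big((B_0)_{r_0}+B_1\big)_{r_1}\subset\big((B_0+B_1)_{r_0}\big)_{r_1}=(B_0+B_1)_{r_0+r_1},
\]
where the last equality uses $(S_r)_s=S_{r+s}$ (triangle inequality in $\R^n$). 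By symmetry $B_0+B_1\subset (A_0+A_1)_{r_0+r_1}$ as well, so $d_h(A_0+A_1,B_0+B_1)\leq r_0+r_1$, as claimed.

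There is essentially no obstacle here; the only points requiring mild care are checking the three set-theoretic inclusions ($S_r+T\subset(S+T)_r$, its iterate, and $(S_r)_s=S_{r+s}$), and noting that compactness guarantees the infimum defining $d_h$ is attained so that all the $r_j$ are genuine radii (though one can equally run the argument with $r_j$ replaced by $r_j+\varepsilon$ and let $\varepsilon\to0$, avoiding even that remark). The lemma serves merely as the compact-case scaffolding for Proposition \ref{prop:continuous}, where the genuine difficulty—controlling recession cones and ensuring the sum stays in $(\K^n_+)_u$—will be handled by the subsequent lemmas.
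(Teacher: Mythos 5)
Your proof is correct and uses essentially the same mechanism as the paper's: both rest on the observation that the Minkowski sum of $r$- and $r'$-neighborhoods lies in the $(r+r')$-neighborhood of the sum (the paper phrases this as $r\mathbf{B}^n + r\mathbf{B}^n = 2r\mathbf{B}^n$). Your version is a touch more complete in that it packages the argument as a clean Lipschitz estimate $d_h(A_0+A_1,B_0+B_1)\leq d_h(A_0,B_0)+d_h(A_1,B_1)$ and treats both directions of the Hausdorff inclusion symmetrically, whereas the paper only writes out the one-sided inclusion and leaves the reverse to symmetry.
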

\begin{proof}
Let $A$, $B\in X$, and $A_i$, $B_i\in X$ be sequences such that $A_i\overset{h}{\to} A$ and $B_i\overset{h}{\to} B$. We have to show that then $A_i+B_i\overset{h}{\to} A+B$. To see this, for any subset $A$ of $\R^n$ let $A_r:=A+r\mathbf{B}^n$. Next note that  for every $\epsilon >0$, there exists an integer $N$ such that $A_i\subset A_{\epsilon/2}$ and $B_i\subset B_{\epsilon/2}$ for $i\geq N$. Thus, since $r\mathbf{B}^n +r \mathbf{B}^n=2r\mathbf{B}^n$, it follows that $A_i+B_i\subset (A+B)_{\epsilon}$ which completes the proof.
\end{proof}

We now use the last lemma  to obtain an asymptotic version of Proposition \ref{prop:continuous}. Again it is easy to check that the sum of closed cones is always a closed cone, so the addition operation here is well-defined.

\begin{lem}\label{lem:conecont}
Let $\C$ be the space of  nontrivial closed convex cones in $\R^n$ which lie in the upper half space $\mathbf{H}^n$, and $\C^+\subset \C$ consist of those cones which intersect $\d \mathbf{H}^n$ only at the origin; then  the Minkowski sum $+\colon \C\times \C^+\to \C$ is continuous under bounded-Hausdorff topology. 
\end{lem}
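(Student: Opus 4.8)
The plan is to reduce the problem to the continuity of Minkowski addition of compact sets (Lemma \ref{lem:X}) by intersecting the cones with an affine hyperplane transverse to their common axis. Since every cone $C\in\C$ lies in $\mathbf{H}^n$, write $\mathbf{H}^n=\{x_n\geq 0\}$ and consider the hyperplane $\Pi:=\{x_n=1\}$. For a cone $C\in\C$, the slice $C\cap\Pi$ is a closed convex subset of $\Pi\cong\R^{n-1}$, and $C$ is recovered as the cone over $C\cap\Pi$ together with its recession directions. The key point, to be checked first, is that when $C\in\C^+$ the slice $C\cap\Pi$ is actually \emph{compact}: a cone in $\mathbf{H}^n$ meeting $\d\mathbf{H}^n=\{x_n=0\}$ only at the origin has all its nonzero elements with strictly positive last coordinate, so its recession directions (which are the directions of $C$ itself) all point strictly into the open upper half-space; hence $C\cap\Pi$ contains no half-line and is therefore bounded. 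For a general $C\in\C$ the slice need not be bounded, but it is still a closed convex set, and in fact $C\cap\Pi$ is compact together with the recession cone contribution; more precisely $C = \mathrm{cl}(\bigcup_{\lambda\geq 0}\lambda(C\cap\Pi))\cup(C\cap\d\mathbf{H}^n)$, and the ``horizontal part'' $C\cap\d\mathbf{H}^n$ behaves continuously on its own.

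The second step is to relate convergence of cones to convergence of slices. I would show: for $C_i,C\in\C$, one has $C_i\overset{bh}{\to}C$ if and only if $C_i\cap\Pi\overset{bh}{\to}C\cap\Pi$ \emph{and} $C_i\cap\d\mathbf{H}^n\overset{bh}{\to}C\cap\d\mathbf{H}^n$ (the second condition being automatic in the restricted setting below). This is essentially the statement that slicing by a transverse hyperplane is a bounded-Hausdorff homeomorphism onto its image, which follows from the scaling relation $C=\bigcup_{\lambda\geq0}\lambda(C\cap\Pi)$ (up to closure) and the fact that bounded-Hausdorff convergence is tested on bounded sets — a bounded neighborhood of a point at height $h>0$ sees $C$ only through $C\cap\{x_n\leq h+\epsilon\}$, which is the truncated cone over $C\cap\Pi$. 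The third step is the algebraic identity that makes everything work: for $C\in\C$ and $D\in\C^+$,
\begin{equation}\label{eq:sliceplan}
(C+D)\cap\Pi \;=\; \mathrm{conv}\big((C\cap\d\mathbf{H}^n)+(D\cap\Pi)\;,\;(C\cap\Pi)+(D\cap\d\mathbf{H}^n)\big)
\end{equation}
(interpreting the pieces appropriately), which expresses the slice of the sum in terms of the slices and horizontal parts of the summands via a fixed continuous operation (Minkowski sum and convex hull of compact sets, using that $D\cap\Pi$ is compact since $D\in\C^+$). Granting \eqref{eq:sliceplan} and the continuity of $+$ and $\mathrm{conv}$ on compacta from Lemma \ref{lem:X} (and a companion statement for $\mathrm{conv}$, which is proved the same way), continuity of $\C\times\C^+\to\C$ follows by composing with the slicing homeomorphism of step two.

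The main obstacle I expect is \eqref{eq:sliceplan}, i.e.\ the bookkeeping of the slice of a Minkowski sum of two cones: a point of $(C+D)\cap\Pi$ is $c+d$ with $c_n+d_n=1$, $c_n,d_n\geq0$, and one must track how the ``mass'' of the last coordinate is split between the two summands, which forces one to include the boundary slices $C\cap\d\mathbf{H}^n$, $D\cap\d\mathbf{H}^n$ and take a convex hull. A cleaner route that avoids the explicit formula: prove directly that slicing $\sigma\colon C\mapsto(C\cap\Pi,\,C\cap\d\mathbf{H}^n)$ is a bounded-Hausdorff embedding, observe that on $\C^+$ the image lands in compact slices, note $(C+D)\cap\{x_n\leq 1\}$ is the Minkowski sum of $C\cap\{x_n\leq a\}$ and $D\cap\{x_n\leq 1-a\}$ sets — which are compact when $D\in\C^+$ after truncating $C$ as well, since the sum constraint bounds $c_n$ — and then quote Lemma \ref{lem:X}. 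Either way, the substantive content is the compactness afforded by the hypothesis $D\in\C^+$ (it is exactly what rescues boundedness after truncation) plus the routine-but-fiddly verification that truncated cones capture bounded-Hausdorff convergence; the final continuity assertion is then a formal consequence.
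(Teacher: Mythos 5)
Your plan diverges from the paper's in a way that opens a genuine gap. The paper normalizes each cone $C$ by its intersection $\ol{C}:=C\cap\S^{n-1}$ with the unit sphere: this slice is always compact, $C\mapsto\ol{C}$ is a homeomorphism from the bounded-Hausdorff topology on $\C$ onto the Hausdorff topology on compact spherical sets, and Lemma \ref{lem:X} together with the radial projection $\pi(x)=x/\|x\|$ then finishes the argument. You instead slice by the affine hyperplane $\Pi=\{x_n=1\}$ together with $\d\mathbf{H}^n$, and the key assertion of your second step fails: the ``only if'' half of ``$C_i\overset{bh}{\to}C$ if and only if $C_i\cap\Pi\overset{bh}{\to}C\cap\Pi$ and $C_i\cap\d\mathbf{H}^n\overset{bh}{\to}C\cap\d\mathbf{H}^n$'' is false for $C\in\C$. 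In $\R^2$ let $C_i$ be the ray through $(1,1/i)$. Then $C_i\overset{bh}{\to}C_\infty$, the nonnegative $x$-axis, yet $C_i\cap\d\mathbf{H}^2=\{o\}$ for every $i$ while $C_\infty\cap\d\mathbf{H}^2=C_\infty$, and $C_i\cap\Pi=\{(i,1)\}$ drifts to infinity; neither slice converges to the corresponding slice of $C_\infty$. So the slicing map you propose to use as a bounded-Hausdorff embedding is in fact discontinuous on $\C$ (restricting to $\C^+$ only controls the second summand $D$, not $C$), and your step-three formula, which feeds the non-convergent slice $C\cap\d\mathbf{H}^n$ into a Minkowski sum, inherits the same discontinuity. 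The fallback ``cleaner route'' has the companion flaw that $C\cap\{x_n\leq a\}$ is \emph{not} compact when $C$ has a nontrivial horizontal part (take $C=\mathbf{H}^n$ itself, which lies in $\C$): the bound on the last coordinate supplied by $D\in\C^+$ does nothing to tame the horizontal directions of $C$, so Lemma \ref{lem:X} cannot be quoted as you do.

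The substantive content your proposal leaves out is a boundedness estimate for the \emph{summands}. Testing bounded-Hausdorff convergence of $C_i+D_i$ on a ball $B_R$ about the origin, one must check that every $z=c+d\in(C+D)\cap B_R$ admits a representation with both $c$ and $d$ in a fixed compact set. This follows from $c_n,d_n\geq 0$ together with the observation that $D\in\C^+$ gives $d_n\geq\epsilon\|d\|$ for some $\epsilon>0$, with the same $\epsilon$ working for all $D_i$ sufficiently close to $D$ since $D_i\cap\S^{n-1}\overset{h}{\to}D\cap\S^{n-1}$ stays uniformly away from the equator; hence $\|d\|\leq R/\epsilon$ and $\|c\|\leq R(1+1/\epsilon)$. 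With this in hand, $(C_i+D_i)\cap B_R=\big((C_i\cap B_{R'})+(D_i\cap B_{R'})\big)\cap B_R$ for a suitable $R'$, both factors are compact, and Lemma \ref{lem:X} applies. This compactness step is the actual crux of the lemma and is precisely what your write-up takes for granted; the paper's spherical normalization sidesteps the entire issue because $C\cap\S^{n-1}$ is compact for every cone and records horizontal directions faithfully, whereas your hyperplane slice pushes them off to infinity.
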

\begin{proof}
For every  $C\in \C$, let $\ol C:=C\cap\S^{n-1}$ and note that for any family $C_i\in\C$, $C_i\overset{bh}\to C$ if and only if $\ol C_i\overset{h}\to \ol C$. Now recall that the spaces $\ol \C$ and $\ol\C^+$ consist of convex spherical sets and are in one-to-one correspondence with the spaces $\C$ and $\C^+$ via the operation $C\mapsto \ol C$, whose inverse is obtained by taking the cones over the elements $\ol C\in \ol\C$. So all we need to show then is that 
$\overset{\circ}{+}\colon \ol\C\times \ol\C^+\to \ol\C$ is continuous under Hausdorff topology, where $C\overset{\circ}{+}C'$ is the collection of all $x\overset{\circ}{+}y:=(x+y)/\|x+y\|$ such that $x\in C$ and $y\in C'$. Since elements of $\ol\C$ all lie in the same hemisphere, and the elements of $\ol\C^+$ do not touch the boundary of that hemisphere,  there exists no pair of points $x\in C\in\ol\C$ and 
$y\in C'\in\ol\C^+$ such that $x=-y$; thus $\overset{\circ}{+}\colon \ol\C\times \ol\C^+\to \ol\C$ is well defined. Finally, let $\pi\colon \R^n-\{o\}\to \S^{n-1}$ be  given by $\pi(x):=x/\|x\|$, and note that $x\overset{\circ}{+}y=\pi(x+y)$. Thus, since $\pi$ is continuous, Lemma \ref{lem:X} implies that $\overset{\circ}{+}\colon \ol\C\times \ol\C^+\to \ol\C$ is continuous, as desired.
\end{proof}

Next we establish the additive property of the recession cones:
 
 \begin{lem}\label{lem:rcsum}
For any pairs of convex bodies $K_0$, $K_1\subset\R^n$, $$\rc(K_0+K_1)=\rc(K_0)+\rc(K_1).$$
\end{lem}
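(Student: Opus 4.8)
The plan is to prove the two inclusions separately, and the easy one is $\rc(K_0)+\rc(K_1)\subset\rc(K_0+K_1)$. Indeed, if $v_0\in\rc(K_0)$ and $v_1\in\rc(K_1)$, then by definition $K_0+v_0\subset K_0$ and $K_1+v_1\subset K_1$, so adding gives $(K_0+K_1)+(v_0+v_1)=(K_0+v_0)+(K_1+v_1)\subset K_0+K_1$, which is exactly $v_0+v_1\in\rc(K_0+K_1)$. This uses nothing beyond the definition of the recession cone.

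For the reverse inclusion $\rc(K_0+K_1)\subset\rc(K_0)+\rc(K_1)$, I would use the half-line characterization from Lemma \ref{lem:rc1}. Fix a unit vector $u\in\ol{\rc(K_0+K_1)}$ (the case $\rc(K_0+K_1)=o$ being trivial). Pick base points $p_0\in K_0$, $p_1\in K_1$; then $p_0+p_1\in K_0+K_1$ is the source of a half-line parallel to $u$, so for every $t\ge 0$ we can write $p_0+p_1+tu=a_t+b_t$ with $a_t\in K_0$ and $b_t\in K_1$. The idea is to show that, after passing to a subsequence $t_i\to\infty$, the unit vectors $(a_{t_i}-p_0)/\|a_{t_i}-p_0\|$ converge to some $v_0\in\ol{\rc(K_0)}$ and correspondingly $(b_{t_i}-p_1)/\|b_{t_i}-p_1\|\to v_1\in\ol{\rc(K_1)}$, and that $u$ is a nonnegative combination of $v_0$ and $v_1$, hence lies in $\rc(K_0)+\rc(K_1)$ since the latter is a cone. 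Concretely: since $a_t-p_0$ and $b_t-p_1$ sum to $tu$ plus a fixed bounded vector, at least one of $\|a_t-p_0\|$, $\|b_t-p_1\|$ is unbounded in $t$; using convexity of $K_0$ (the segment from $p_0$ to $a_t$ lies in $K_0$) a compactness argument on $\S^{n-1}$ gives a subsequential limit direction in $\ol{\rc(K_0)}$, and similarly for $K_1$, exactly as in the proof of Lemma \ref{lem:projection}. Dividing the identity $a_t-p_0+b_t-p_1=tu+(p_0+p_1-p_0-p_1)$, wait — more cleanly, $a_t+b_t=p_0+p_1+tu$, so $\frac{(a_t-p_0)+(b_t-p_1)}{t}\to u$, and writing $\alpha_t=\|a_t-p_0\|/t$, $\beta_t=\|b_t-p_1\|/t$ with unit vectors $\hat a_t,\hat b_t$, one gets $\alpha_t\hat a_t+\beta_t\hat b_t\to u$; a subsequence makes $\alpha_t\to\alpha\ge0$, $\beta_t\to\beta\ge0$, $\hat a_t\to v_0$, $\hat b_t\to v_1$, and then $\alpha v_0+\beta v_1=u$ with $\alpha v_0\in\rc(K_0)$ (a half-line in direction $v_0$ emanates from $p_0$ inside $K_0$, by taking limits of the segments $\overline{p_0 a_{t_i}}$) and $\beta v_1\in\rc(K_1)$ likewise. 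Hence $u\in\rc(K_0)+\rc(K_1)$.

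The main obstacle is the bookkeeping in the compactness step: one must be careful that the limiting directions $v_0$, $v_1$ genuinely lie in the respective recession cones and that the coefficients $\alpha,\beta$ are not both zero (they sum — in the limit, up to the triangle inequality — to at least $1$ since $\|\alpha_t\hat a_t+\beta_t\hat b_t\|\to 1$ while $\alpha_t+\beta_t\ge\|\alpha_t\hat a_t+\beta_t\hat b_t\|$, so $\alpha+\beta\ge 1>0$). Everything else — choosing base points, the subsequence extraction on the compact sphere, and the convexity argument producing half-lines in $K_0$ and $K_1$ — is routine and essentially a repetition of the technique already used in Lemma \ref{lem:projection}. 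An alternative, slicker route would be to invoke the barrier-cone duality: $\bc(K_0+K_1)=\bc(K_0)\cap\bc(K_1)$ follows directly from $\sup_{K_0+K_1}\l v,\cdot\r=\sup_{K_0}\l v,\cdot\r+\sup_{K_1}\l v,\cdot\r$, and then polarity together with $\cl(\rc(K_0)+\rc(K_1))=(\bc(K_0)\cap\bc(K_1))^*$ would give the result modulo a closedness check; but since the recession cones of bodies in $\K^n$ need not be such that their sum is automatically closed in general, I would prefer the direct geometric argument above, which yields the identity on the nose.
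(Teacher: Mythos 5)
Your easy inclusion $\rc(K_0)+\rc(K_1)\subset\rc(K_0+K_1)$ is correct and coincides with the paper's. The reverse inclusion, however, has a genuine gap in the compactness step: you pass to a subsequence along which $\alpha_t\to\alpha$, $\beta_t\to\beta$ as finite numbers, but you never rule out $\alpha_t,\beta_t\to\infty$. Your lower bound only gives $\alpha+\beta\geq 1$; it does nothing to prevent both coefficients from blowing up together. And they can: with $K_0=\{(x,y):y\geq x^2\}$, $K_1=\{(x,y):y\leq -x^2\}$, $u=(1,0)$, $p_0=p_1=o$, a valid decomposition is $a_t=(t,t^2)$, $b_t=(0,-t^2)$, giving $\alpha_t,\beta_t\sim t\to\infty$, $\hat a_t\to(0,1)$, $\hat b_t\to(0,-1)$, and the would-be identity $\alpha v_0+\beta v_1=u$ becomes meaningless.

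This is not a repairable bookkeeping lapse, because the lemma as stated is simply false: in the example above $K_0+K_1=\R^2$, so $\rc(K_0+K_1)=\R^2$, while $\rc(K_0)+\rc(K_1)$ is only the $y$-axis. The correct general statement (Rockafellar, Cor.~9.1.1) needs the extra hypothesis that $z_0+z_1=o$ with $z_i\in\rc(K_i)$ forces $z_i=o$, i.e.\ $\rc(K_0)\cap(-\rc(K_1))=\{o\}$ when the $K_i$ are line-free. In the paper this is automatic, since Lemma~\ref{lem:rcsum} is invoked only in Proposition~\ref{prop:continuous} for $K_0,K_1\in(\K^n_+)_u$, whose recession cones lie in $\{\langle\cdot,u\rangle\geq 0\}$ and meet $u^\perp$ only at $o$ by Lemma~\ref{lem:technical}; under that hypothesis your argument does close, because unbounded $\alpha_t,\beta_t$ would force $v_0=-v_1$, an immediate contradiction. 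Be aware, though, that the paper's own one-line reverse inclusion has the same defect: from $\sup_{K_0}\langle v,\cdot\rangle+\sup_{K_1}\langle v,\cdot\rangle=\infty$ one may only conclude $v\notin\bc(K_0)\cap\bc(K_1)$, not $v\in\rc(K_0)\cup\rc(K_1)$, so the published argument does not rescue you. Your ``slicker'' barrier-cone route breaks at exactly the same spot: $(\bc(K_0)\cap\bc(K_1))^*$ equals $\cl(\rc(K_0)+\rc(K_1))$ only when $\cl(\bc(K_0)\cap\bc(K_1))=\cl\bc(K_0)\cap\cl\bc(K_1)$, which also fails in the example.
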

\begin{proof}
Set $K:=K_0+K_1$.
We may assume that $K_0$, $K_1$ both contain the origin. Then $K_0$,
$K_1\subset K$, which implies  that $\rc(K_0)$, $\rc(K_1)\subset\rc(K)$. So, since cones are closed under Minkowski addition, 
 $\rc(K_0)+\rc(K_1)\subset\rc(K)$, which completes half of the proof. To prove the reverse inclusion, we may suppose that there exists a vector $v\in \rc(K)-\{o\}$, for else there is nothing to prove. Then
$$
\infty=\sup_{K}\l v,\cdot\r=
\sup_{K_0}\l v,
\cdot\r+\sup_{K_1}\l v,
\cdot\r,
$$
which implies that $v\in\rc(K_0)\cup\rc(K_1)\subset\rc(K_0)+\rc(K_1)$.
\end{proof}

We also need the following simple characterization:

\begin{lem}\label{lem:kpluscompact}
Let $K\subset\R^n$ be an unbounded convex body. Then $K\in\K^n_+$ if and only if $K$ contains no lines. \end{lem}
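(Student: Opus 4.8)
The plan is to prove both directions of the equivalence $K\in\K^n_+\iff K$ contains no lines, using the structural decomposition $K=\ol K+L$ from Lemma \ref{lem:rc4}.

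First I would handle the easy direction: if $K\in\K^n_+$, then $K$ contains no lines. Suppose toward a contradiction that $K$ contains a line $\ell$ in the direction of a unit vector $v$. By Lemma \ref{lem:rc1} applied to both $v$ and $-v$, every point of $K$ is the source of half-lines parallel to $v$ and to $-v$, so in fact every point of $K$ lies on a full line parallel to $v$; hence $v$ belongs to the linearity space $L$ of $K$. Now let $p\in\d K$ be a strictly convex point, so there is a support hyperplane $\d H$ with $K\cap\d H=\{p\}$. But the line through $p$ parallel to $v$ lies in $K$, and it must lie in $\d H$ as well (a line in a convex body meeting a support hyperplane at a point must be contained in it — since the half-space bounded by $\d H$ containing $K$ cannot contain a line exiting $\d H$ transversally). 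This forces $K\cap\d H$ to contain that whole line, contradicting $K\cap\d H=\{p\}$. Hence $K$ has no lines.

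For the converse — if $K$ is unbounded and contains no lines, then $K\in\K^n_+$ — I would argue as follows. Since $K$ has no lines, its linearity space is $L=\{o\}$, so by Lemma \ref{lem:rc4} we have $K=\ol K$, and $\d K$ is homeomorphic to $\R^{n-1}$, giving $K\in\K^n$ already. It remains to produce a strictly convex point. The idea is to use the fact that $K$ is unbounded but line-free: pick any point $q\in\inte(K)$ and consider a half-line $\ell^+\subset K$ emanating from $q$ (which exists by Lemma \ref{lem:rc1}, since $K$ is unbounded), with direction $u\in\ol\rc(K)$. Because $K$ contains no line, $-u\notin\rc(K)$, so the opposite ray from $q$ exits $K$; thus $K$ is ``bounded in the $-u$ direction'' and has a support hyperplane $\d H$ with outward normal $-u$. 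I would then show that one can choose the support hyperplane so that $K\cap\d H$ is a single point: among all support hyperplanes of $K$, consider one whose contact set $F=K\cap\d H$ is a face of minimal dimension; if $\dim F\ge 1$, then $F$ is itself an unbounded or bounded lower-dimensional convex set contained in $\d K$, and an extreme point of $F$ (which exists since $F$ contains no line, $K$ having none) is a strictly convex point of $K$ — one supports $F$ within $\d H$ and tilts $\d H$ slightly to isolate that extreme point. This yields a strictly convex point $p\in\d K$, so $K\in\K^n_+$.

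The main obstacle I anticipate is the converse direction, specifically the step of passing from ``$K$ is unbounded and line-free'' to the existence of a genuinely strictly convex point (a support hyperplane meeting $K$ in exactly one point), rather than merely a support hyperplane meeting $K$ in a bounded face. The tilting/extreme-point argument needs care: one must verify that after perturbing $\d H$ to isolate an extreme point $p$ of the minimal face $F$, the perturbed hyperplane still supports all of $K$ (not just $F$) and touches it only at $p$. This is where I would invoke that extreme points of a line-free closed convex set are exposed up to a limiting argument, or alternatively invoke Wu's result \cite{wu:spherical} on balanced directions cited in Section \ref{sec:crd}, which already guarantees that every $K\in\K^n_+$ behaves well — but here we need the reverse, so a direct argument via exposed points of the face $F$ seems cleanest. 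Everything else (the decomposition, the homeomorphism type, the easy direction) is routine given Lemmas \ref{lem:rc1} and \ref{lem:rc4}.
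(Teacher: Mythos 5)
Your forward direction is correct and matches the paper's. The converse, however, has a genuine gap, occurring both one step before and at the obstacle you flag.

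The earlier step: from ``$-u\notin\rc(K)$'' you conclude that $K$ is bounded in the $-u$ direction and therefore has a support hyperplane with outward normal $-u$. That inference is false. Take $K=\{(x,y)\in\R^2: y\geq e^x\}$, a line-free unbounded convex body every boundary point of which is strictly convex (so the lemma certainly holds for it). Its recession cone is the closed second quadrant $\{a\leq 0,\ b\geq 0\}$. Choosing $u=(0,1)\in\ol\rc(K)$ gives $\sup_K\l -u,\cdot\r=\sup_K(-y)=0$, which is not attained, so there is no support line with outward normal $(0,-1)$; choosing $u=(-1,0)\in\ol\rc(K)$ is worse, since $\sup_K\l -u,\cdot\r=\sup_K x=\infty$. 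The property you actually need is $\l u,v\r>0$ for every $v\in\rc(K)-\{o\}$; for a line-free $K$ this forces both attainment (hence existence of the support hyperplane) and compactness of the contact set $K\cap\d H$. Not every $u\in\ol\rc(K)$ has this property. The balanced central direction $\cd(K)$ of Proposition \ref{prop:direction} is designed to have it, and that is how the paper secures a support hyperplane with compact contact.

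The later step, which you flag as the main obstacle, also cannot be left as a heuristic. Extreme points of a line-free closed convex set need not be exposed, and ``tilting $\d H$ slightly'' to isolate an extreme point of a face is precisely the nontrivial content: the perturbed hyperplane may slide off $K$ or may touch it elsewhere. The paper avoids this entirely with an explicit construction. Having chosen $\d H$ with $K\cap\d H$ compact, it slices $K$ by a parallel hyperplane $\d H'$ to produce a compact truncation $K'=K\cap H'$, encloses $K\cap\d H'$ in a ball $B$ disjoint from $\d H$, and takes the point $p\in K'$ farthest from the center of $B$. The tangent hyperplane to the sphere through $p$ supports $K'$ and meets it only at $p$; since $p\notin B$ forces $p$ into the interior of the slab, a short convexity argument upgrades this to a support hyperplane of all of $K$ meeting it only at $p$. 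That farthest-from-a-ball-center construction on a compact truncation is the ingredient your outline is missing.
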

\begin{proof}
If $K\in\K^n_+$, then  there is a support hyperplane $\d H$ of $K$ such that $K\cap\d H=\{p\}$. So $K$ has no lines passing through $p$, which implies that   $K$ contains no lines,  by Lemma \ref{lem:rc1}. Conversely, if $K$ contains no lines, then $K\in\K^n$ by Lemma \ref{lem:rc4}; because if $\dim(L)=0$, then $K=\ol K$ in Lemma \ref{lem:rc4}, which yields that $\ol K$ is unbounded. 
It only remains then to check that $K$ has a strictly convex point. To see this note that
 $\rc(K)$ contains no half-lines parallel to $\d H$, since $K\cap\d H$ is compact. So it follows that $\d H'\cap K$ is also compact  for any hyperplane $\d H'$ parallel to $\d H$. Take one such hyperplane $\d H'$ which is different from $\d H$ and intersects $K$, see Figure \ref{fig:tub}.
 \begin{figure}[h]
\begin{center}
\begin{overpic}[width=3in ]{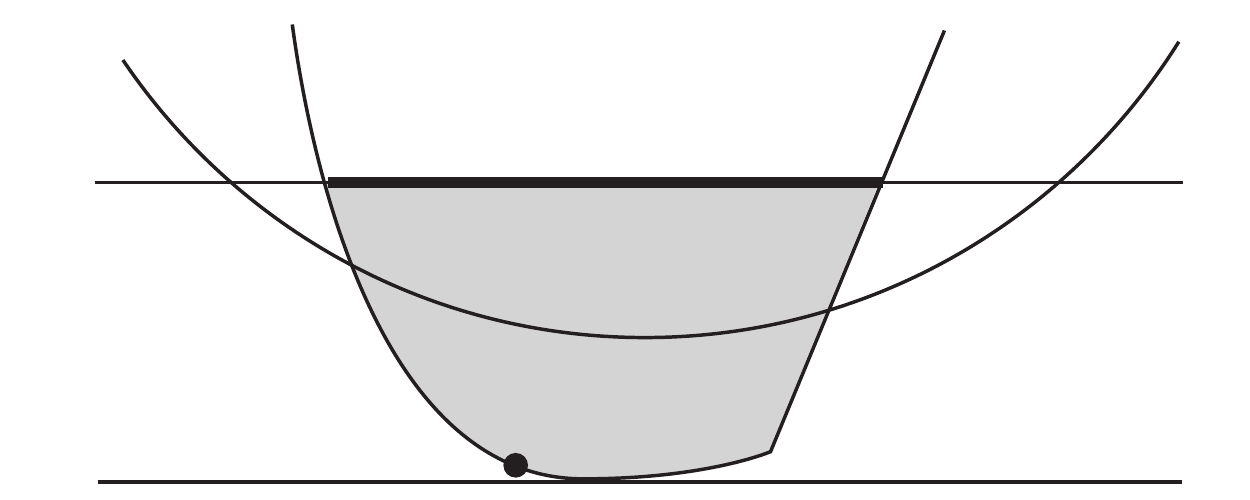}
\end{overpic}
\put(-220,2){$\d H$}
\put(-220,53){$\d H'$}
\put(-118,37){$\ol K$}
\put(-40,65){$B$}
\put(-130,12){$p$}
\caption{}
\label{fig:tub}
\end{center}
\end{figure}
 Further let $H'$ be the half-space determined by $H'$ which contains $\d H$, and set $ K':=K\cap H'$. Then $ K'$ is compact, since if it contains any half-lines, then they must be parallel to $\partial H$, which is impossible since $\rc(K)$ contains no half-lines parallel to $\partial H$ as we have already discussed. Further since $\d H'\cap K$ is compact and is disjoint from $\d H$,  there  exists a ball $B\subset\R^n$ which contains $\d H'\cap K$ but is disjoint from $\d H$. Let $p$ be  the farthest point of $ K'$ from the center of $B$ (which exists by compactness of $K'$). Then $p$ is a strictly convex point of $ K'$. We claim that  $p$ is a strictly convex point of $K$ as well. 
  To see this  note that $p\not\in B$, since $\d H\cap  K'\not\subset B$.  In particular $p\not\in\d H'$, since $\d H'\cap K'\subset B$. Thus $p$ lies in the interior of $H'$, $p\in\inte(H')$, which shows that
$$
p\in  K'\cap \inte(H')= K\cap \inte(H').
$$
On the other hand, since $p$ is a strictly convex point of $ K'$, there exists a hyperplane $\Pi$ such that $\Pi\cap  K'=\{p\}$. So we have
$$
p\in\Pi\cap K\cap \inte(H')\subset \Pi\cap  K'=\{p\}.
$$
Thus $\Pi\cap K\cap \inte(H')=\{p\}$. Now suppose, towards a contradiction, that $\Pi$ intersects $K$ at some other point $q$. Then the line segment $pq$ lies in $\Pi\cap K$, by convexity of $\Pi$ and $K$. But $K\cap \inte(H')$, which is an open neighborhood of $p$ in $\R^n$, must contain some point of $pq$ other than $p$, and therefore it must contain some point of $\Pi$ other than $p$, which is a contradiction. So $K$ has a strictly convex point.
\end{proof}

Finally we observe an important compactness property of the elements of $(K^n_+)_u$:

\begin{lem}\label{lem:technical}
Let $K\in (K^n_+)_u$ and $H$ be a half-space such that $u$ is the outward normal to $\partial H$. Then $K\cap H$ is compact.
\end{lem}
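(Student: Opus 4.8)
The plan is to show that the closed set $K\cap H$ has trivial recession cone and is therefore compact. Write $H=\{x:\l u,x\r\le c\}$, so that $\rc(H)=\{v:\l u,v\r\le 0\}$. If $K\cap H=\emptyset$ there is nothing to prove, so assume it is nonempty. I would first record that $\rc(K\cap H)=\rc(K)\cap\rc(H)$: a nonzero $v$ lies in $\rc(K\cap H)$ exactly when some point of $K\cap H$ is the source of a half-line in direction $v$ contained in both $K$ and $H$, and by Lemma \ref{lem:rc1} such a half-line lies in $K$ iff $v\in\rc(K)$, while it lies in $H$ iff $\l u,v\r\le 0$. Hence it suffices to prove that every $v\in\rc(K)$ with $\l u,v\r\le 0$ equals $o$.

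Suppose instead that $v\in\rc(K)$, $v\neq o$, and $\l u,v\r\le 0$. Since $K\in(\K^n_+)_u$ it has balanced support with respect to $u$, so there is a support hyperplane $\d H_0$ of $K$ with outward normal $-u$; thus $K$ lies in a half-space of the form $\{x:\l u,x\r\ge -\alpha\}$ and $\l u,\cdot\r\equiv-\alpha$ on $\d H_0$. Fixing any $x_0\in K$, Lemma \ref{lem:rc1} gives $x_0+tv\in K$ for all $t\ge 0$, whence $\l u,x_0\r+t\l u,v\r\ge-\alpha$ for all $t\ge0$; letting $t\to\infty$ forces $\l u,v\r\ge 0$, so in fact $\l u,v\r=0$. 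Now choose $p\in\d H_0\cap K$ (nonempty, since $\d H_0$ is a support hyperplane). The half-line $\{p+tv:t\ge 0\}$ lies in $K$ because $v\in\rc(K)$, and lies in $\d H_0$ because $\l u,v\r=0$. So $\d H_0\cap K$ contains a half-line, and condition (ii) in the definition of balanced support then forces it to contain the whole line $\{p+tv:t\in\R\}$. Hence $K$ contains a line, contradicting Lemma \ref{lem:kpluscompact} since $K\in\K^n_+$.

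Therefore $\rc(K\cap H)=\rc(K)\cap\rc(H)=\{o\}$, and since a nonempty closed convex set with trivial recession cone contains no half-line and is thus bounded (Lemma \ref{lem:rc1}, or \cite[Thms. 8.3, 8.4]{rockafellar}), $K\cap H$ is compact. The one place to be slightly careful is the recession-cone bookkeeping in the first paragraph: because $K\cap H$ may fail to have interior points, I would phrase everything in terms of half-lines in closed convex sets rather than quoting the convex-body form of Lemma \ref{lem:rc1}; this is routine, and the real content is the contradiction argument above, where the balanced-support hypothesis is used precisely to exclude a recession direction lying along the support hyperplane $\d H_0$.
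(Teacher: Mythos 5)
Your proof is correct and follows essentially the same route as the paper's: compute $\rc(K\cap H)=\rc(K)\cap\rc(H)$, use the support hyperplane with outward normal $-u$ to get $\l u,\cdot\r\ge 0$ on $\rc(K)$, and rule out a nonzero recession direction orthogonal to $u$ via balanced support together with Lemma \ref{lem:kpluscompact}. The only cosmetic difference is that you spell out the inequality and the contradiction directly, whereas the paper packages the last step as the compactness of $K\cap\d H'$; the substance is identical.
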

\begin{proof}
We may assume that $o\in\partial H$. Then
$$\rc(K\cap H)=\rc(K)\cap \rc(H)=\rc(K)\cap H.$$ Next note that
$\l \cdot ,u\r\leq 0$ on $H$, since $\partial H$ has outward normal $u$.    On the other hand,  $K$ is supported by a hyperplane 
$\partial H'$ with outward normal $-u$ by assumption. So $\l \cdot,u\r\geq 0$  on $\rc(K)$.  Further note that $K\cap \d H'$ is compact: for otherwise it must contain a half-line by Lemma \ref{lem:rc1}, and therefore a full line by the balance assumption, which is not permitted by Lemma \ref{lem:kpluscompact}. So  $\rc(K)$ does not contain any half-lines orthogonal to $u$.
 Consequently $\l \cdot,u\r >0$ on $\rc(K)-\{o\}$. So $\rc(K)\cap H=\{o\}$, which finishes the proof by Lemma \ref{lem:rc1}.
\end{proof}

Now we are ready to prove the last proposition:

\begin{proof}[Proof of Proposition \ref{prop:continuous}]
First we check that if $K_0$, $K_1\in (\K^n_+)_u$, then $K:=K_0+K_1\in (\K^n_+)_u$.  The sum of convex sets is always convex, so the convexity of $K$ is automatic. Next we check that $K$ is closed (which is not automatic by Note \ref{note:closed}). For convenience let $u=(0,\dots,0,1)$. Then after translations we may assume that $K_0$, $K_1$ lie in the the upper half-space $x_n\geq 0$ and are supported by the hyperplane $x_n=0$. Now let $H^t$ be the half-space given by $x_n\leq t$, and note that
$$
K\cap H_t=(K_0\cap H_t+K_1\cap H_t)\cap H_t.
$$ 
Further recall that, by Lemma \ref{lem:technical}, $K_0\cap H_t$ and $K_1\cap H_t$ are compact. But  sum of compact sets is always compact. So $K\cap H_t$ is compact for all $t$, which yields that $K$ is closed. Now we know that $K$ is a convex body, since it clearly has interior points. Further note that since $K$ is supported by $\d H_0$ and $K\cap \d H_0$ is bounded, $K$ contains no lines (by Lemma \ref{lem:rc1}). 
 So $K\in \K^n_+$ by Lemma \ref{lem:kpluscompact}. Finally, again since $K\cap\d H_0$ is compact, it follows that $K$ has balanced support with respect to $u$, so  $K\in (\K^n_+)_u$. 

Next we establish the continuity of the Minkowski addition on $(\K^n_+)_u$. To see this, let  $K^i_0$, $K^i_1$ be sequences in  $(\K^n_+)_u$ such that $K^i_0 \overset{a}{\to} K_0$ and $K^i_1 \overset{a}{\to} K_1$, where recall that $\overset{a}{\to}$ denotes convergence with respect to the asymptotic topology.
Setting $K^i:=K^i_0+K^i_1$, we need to show   that
$$
K^i\overset{a}{\longrightarrow} K.
$$
To establish this convergence we need in turn to verify  
$$
(a)\;\rc(K^i)\overset{bh}{\longrightarrow}\rc(K)\quad\quad\text{and}\quad\quad
(b)\;K^i\overset{bh}{\longrightarrow}K,
$$
where, recall that, $\overset{bh}{\to}$ indicates convergence with respect to the bounded-Hausdorff topology.  To see (a)  note that by Lemma \ref{lem:rcsum},
$$
\rc(K^i)=\rc(K^i_0)+\rc(K^i_1),\quad\text{and}\quad\rc(K)=\rc(K_0)+\rc(K_1).
$$ 
Furthermore, by assumption
$$
\rc(K^i_0)\overset{bh}{\longrightarrow}\rc(K_0),\quad\text{and}\quad \rc(K^i_1)\overset{bh}{\longrightarrow}\rc(K_1).
$$
Since we have assumed $u=(0,\dots,0,1)$, all  recession cones are supported by the hyperplane $\d \mathbf{H}^n$, and Lemma \ref{lem:conecont} finishes the proof of (a).
Next, to verify (b), it is enough to show that 
\begin{equation}\label{eq:Ki}
K^i\cap H_t\overset{h}{\longrightarrow} K\cap H_t.
\end{equation} 
where $
H_t$ are the half-spaces given by $x_n\leq t$ defined earlier. Also note that, similar to the earlier argument, we have 
$$
K^i\cap H_t=(K^i_0 \cap H_t+ K^i_1 \cap H_t)\cap H_t.
$$
Further $K^i_j \cap H_t\overset{h}{\to} K_j \cap H_t$, $j=1$, $2$, since $K^i_j \cap H_t$ are compact by Lemma \ref{lem:technical} and $K^i_j \overset{bh}{\longrightarrow}K_j$. So, by Lemma \ref{lem:X}, 
$$
K^i_0 \cap H_t+ K^i_1 \cap H_t\overset{h}{\longrightarrow} K_0 \cap H_t+ K_1 \cap H_t.
$$ 
The last two displayed expressions now imply \eqref{eq:Ki}.
\end{proof}

Finally we need  a variation of the last proposition, which establishes the continuity of the addition when one of the summands is a fixed element of $K^n_u$. This result is sharp since Minkowski addition is not continuous on $K^n_u$, see Note \ref{note:knucont}.

\begin{prop}\label{prop:continuous2}
For any pair of convex bodies $K_0\in \K^n_u$, and $K_1\in (\K^n_+)_u$, $K_0+K_1\in \K^n_u$. Furthermore, 
for any fixed $K_0\in \K^n_u$, the mapping $K_0+(\cdot)\colon (\K^n_+)_u\to \K^n_u$ is asymptotically continuous.
\end{prop}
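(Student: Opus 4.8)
The plan is to reduce the proposition to Proposition~\ref{prop:continuous} by splitting off the linearity space of $K_0$. Let $L_0$ be the linearity space of $K_0$ and $m_0=\dim L_0$. Since $K_0$ is supported by a hyperplane $\d H$ with outward normal $-u$, we have $-u\in\bc(K_0)$, hence $\langle v,u\rangle\ge 0$ for every $v\in\rc(K_0)\supset L_0$; as $L_0$ is a subspace this forces $L_0\perp u$, so $u$ lies in $V:=L_0^\perp$. Writing $\pi_0\colon\R^n\to V$ for the orthogonal projection, Lemma~\ref{lem:rc4} gives $K_0=\pi_0(K_0)+L_0$. First I would check that $\pi_0(K_0)$ and $\pi_0(K_1)$ both lie in $(\K^{n-m_0}_+)_u$ inside $V\cong\R^{n-m_0}$: by Lemmas~\ref{lem:projection}, \ref{lem:relbd} and~\ref{lem:rc1} they are unbounded convex bodies, they contain no lines (for $K_0$ because $L_0$ is maximal, for $K_1$ because $K_1$ itself has none), and their support sets in the direction $-u$ are the $\pi_0$-images of those of $K_0$ and $K_1$. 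The only nonroutine point is that these images are compact: a recession vector $w$ of $\pi_0(\d H\cap K_0)$ would, by the balance condition on $K_0$ together with Lemma~\ref{lem:rcsum}, force $-w$ into $\rc(\pi_0(\d H\cap K_0))$ as well, so this projected support set---which contains no lines---would contain a line, a contradiction.

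Granting that, the first assertion follows at once: by Proposition~\ref{prop:continuous}, $\pi_0(K_0+K_1)=\pi_0(K_0)+\pi_0(K_1)\in(\K^{n-m_0}_+)_u\subset V$, and in particular this set is closed. Since $L_0\subset\rc(K_0)\subset\rc(K_0+K_1)$ and $L_0$ is a subspace, $K_0+K_1$ is invariant under translation by $L_0$ and therefore equals $\pi_0(K_0+K_1)+L_0$; thus $K_0+K_1$ is a closed cylinder over an element of $(\K^{n-m_0}_+)_u$, and a direct check---its linearity space is exactly $L_0$, its boundary is homeomorphic to $\R^{n-1}$ by Lemma~\ref{lem:rc4}, and its support set in the direction $-u$ is a sum of a compact set and $L_0$, which satisfies the half-line/line condition---shows $K_0+K_1\in\K^n_u$.

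For the continuity statement, fix $K_0\in\K^n_u$, let $K_1^i\overset{a}{\to}K_1$ in $(\K^n_+)_u$, and put $K^i:=K_0+K_1^i$, $K:=K_0+K_1$. Exactly as in Proposition~\ref{prop:continuous} it suffices to prove $\rc(K^i)\overset{bh}{\to}\rc(K)$ and $K^i\overset{bh}{\to}K$. The first follows from $\rc(K^i)=\rc(K_0)+\rc(K_1^i)$ (Lemma~\ref{lem:rcsum}) and Lemma~\ref{lem:conecont}: after a rotation we may take $u=(0,\dots,0,1)$, and then $\rc(K_0)$ lies in $\mathbf{H}^n$ while $\rc(K_1^i)$ and $\rc(K_1)$ meet $\d\mathbf{H}^n$ only at $o$ (as in the proof of Lemma~\ref{lem:technical}), with $\rc(K_1^i)\overset{bh}{\to}\rc(K_1)$ by hypothesis. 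For $K^i\overset{bh}{\to}K$ I would follow the proof of Proposition~\ref{prop:continuous} line for line: for fixed $t$ one has $K^i\cap H_t=(K_0\cap H_{t'}+K_1^i\cap H_{t'})\cap H_t$ for $t'$ large enough uniformly in $i$, here $K_0\cap H_{t'}$ is a fixed closed---now in general unbounded---convex set, $K_1^i\cap H_{t'}\overset{h}{\to}K_1\cap H_{t'}$ with all of these compact by Lemma~\ref{lem:technical}, so (adding a fixed closed set being $1$-Lipschitz for the Hausdorff distance) $K_0\cap H_{t'}+K_1^i\cap H_{t'}\overset{h}{\to}K_0\cap H_{t'}+K_1\cap H_{t'}$, and intersecting these nested convex sets with the fixed half-space $H_t$ is continuous in the bounded-Hausdorff topology because the limit meets $\inte H_t$. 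Chasing an arbitrary bounded set through these identities gives $K^i\overset{bh}{\to}K$.

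The step I expect to demand the most care is whatever breaks because $K_0$---and hence $K_0\cap H_{t'}$---is no longer compact, which is exactly the new feature compared with Proposition~\ref{prop:continuous}. On the membership side this is the linearity-space bookkeeping (that the sum is still closed and still in $\K^n$, with linearity space exactly $L_0$), which relies on $\rc(K_1)$ being pointed with respect to $u$. On the continuity side the genuinely substantive point is the uniform boundedness of the family $\{K_1^i\cap H_{t'}\}_i$ (equivalently, that the quantities $\inf_{K_1^i}\langle\cdot,u\rangle$ stay bounded, which is what legitimizes the choice of $t'$ above): without it the Minkowski sums would not converge and the $1$-Lipschitz estimate would be vacuous. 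That uniform bound is precisely where the hypothesis $K_1\in(\K^n_+)_u$ enters---if it failed, a segment/convexity argument applied to $K_1^i\overset{bh}{\to}K_1$ would produce a unit recession vector of $K_1$ with nonpositive $u$-component, contradicting the pointedness of $\rc(K_1)$. Everything else is a formal consequence of Proposition~\ref{prop:continuous} and the lemmas already established.
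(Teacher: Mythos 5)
Your first paragraph (membership) follows the paper's own route essentially verbatim: split off the linearity space $L_0$ of $K_0$, check that the orthogonal projections $\ol K_0,\ol K_1$ into $L_0^\perp$ lie in $(\K^{n-m}_+)_u$, invoke Proposition~\ref{prop:continuous} there, and add $L_0$ back. You supply one useful detail the paper leaves implicit, namely why $u\perp L_0$ (so that the projections retain balanced support with respect to the \emph{same} vector $u$), and you verify compactness of the projected support set directly; the latter can be obtained more quickly from Lemma~\ref{lem:kpluscompact} applied to $\ol K_0$, and the reference to Lemma~\ref{lem:rcsum} there should be to Lemma~\ref{lem:projection}, but the idea is right.

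For the continuity assertion you genuinely diverge from the paper. The paper again projects to $L_0^\perp$, observes that $K_0+K^i_1=L_0+\ol K_0+\ol K^i_1$ with all barred bodies in $(\K^{n-m}_+)_u$ and $\ol K^i_1\overset{bh}{\to}\ol K_1$, quotes Proposition~\ref{prop:continuous} in $L_0^\perp$, and notes that adding the fixed subspace $L_0$ preserves bounded-Hausdorff convergence. That completely sidesteps unbounded Hausdorff distances, because everything happening in $L_0^\perp$ after slicing by a half-space is compact. You instead work directly in $\R^n$ and redo the argument of Proposition~\ref{prop:continuous}, and since $K_0\cap H_{t'}$ and $K\cap H_t$ are now unbounded (they contain $L_0$-translates), you are forced to manipulate Hausdorff distances between noncompact sets. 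Your $1$-Lipschitz estimate for adding a fixed closed set is correct, and you correctly flag the two substantive points: the uniform lower bound on $\inf_{K^i_1}\langle\cdot,u\rangle$, and the need for the limit to meet $\inte H_t$ when you intersect. But the closing step, ``chasing an arbitrary bounded set through these identities,'' is doing real work that you do not carry out: passing from $d_h\bigl(K_0\cap H_{t'}+K^i_1\cap H_{t'},\,K_0\cap H_{t'}+K_1\cap H_{t'}\bigr)\to 0$ to $K^i\cap C\overset{h}{\to}K\cap C$ for bounded $C$ requires a convexity/nondegeneracy argument (roughly: project an $\epsilon$-near point back into $H_t$ along a segment toward an interior point and control the displacement) that neither Lemma~\ref{lem:X} nor the $bh$-definition gives you for free. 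Your route can be made rigorous, and has the mild advantage of avoiding the projection bookkeeping, but the paper's cylinder decomposition is cleaner precisely because it reduces to Proposition~\ref{prop:continuous} on compact slices and never touches Hausdorff distance between unbounded sets.
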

\begin{proof}
First check that if $K_0\in \K^n_u$, and $K_1\in (\K^n_+)_u$, then $K:=K_0+K_1\in \K^n_u$. Recall that, by Lemma \ref{lem:rc4}, $K_0=\ol K_0+L_0$ where $L_0$ is the linearity space of $K_0$ and $\ol K_0$ is the  projection of $K_0$ into $L_0^\perp$. Further note that, since $\ol K_0$ has no lines, Lemma \ref{lem:kpluscompact} implies that $\ol K_0\in\K^{n-m}_+$ where $m$ is the dimension of $L_0$, and so we may identify $L_0^\perp$ with $\R^{n-m}$. Now let $\ol K_1$ be the projection of $K_1$ into $L_0^\perp$. Then $\ol K_1\in \K^{n-m}_+$ as well, because $K_1$ has no lines (again by Lemma \ref{lem:kpluscompact}), and so $\ol K_1$ has no lines. Further, since by assumption $K_0$, $K_1$ have balanced support with respect to $u$, then so do the projections $\ol K_0$ and $\ol K_1$. So 
 $\ol K_0$, $\ol K_1\in (\K^{n-m}_+)_u$, which implies, by Proposition \ref{prop:continuous}, that $\ol K=\ol K_0+\ol K_1\in (\K^{n-m}_+)_u$.
 Consequently $K=L_0+\ol K\in \K^n_u$.

It remains to verify the continuity of the addition which, as in the proof of Proposition \ref{prop:continuous}, consists of checking the bounded-Hausdorff convergence of the recession cones, followed by the bounded-Hausdorff convergence of the bodies. Convergence of the cones again follows from Lemma \ref{lem:conecont}.
To see the convergence of the bodies,  let $K^i_1\in (K^n_+)_u$ be a family of convex bodies such that $K^i_1\overset{bh}{\to}K_1$. We have to show then that $K_0+K^i_1\overset{bh}{\to}K$. Now let $\ol K_0$, $L_0$, be as in the last paragraph, and $\ol K_1^i$ be the projection of $K_1^i$ into $L_0^\perp$. Then
\begin{equation}\label{eq:K0Ki1}
K_0+K^i_1=L_0+ \ol K_0+\ol K_1^i.
\end{equation}
But, as we argued in the last paragraph, $\ol K_0$, $\ol K_1^i\in (\K^{n-m}_+)_u$. Further $\ol K^i_1\overset{bh}{\to}\ol K_1$, since the projection $\R^n\to L^\perp$ is continuous in the bounded-Hausdorff sense. So, by Proposition \ref{prop:continuous},
$$
\ol K_0+\ol K_1^i\overset{bh}{\longrightarrow} \ol K_0+\ol K_1.
$$
Thus, using \eqref{eq:K0Ki1}, we have
$$
K_0+K^i_1 \overset{bh}{\longrightarrow} L_0+ \ol K_0+\ol K_1=L_0+\ol K=K,
$$ 
as desired.
\end{proof}

\begin{note}\label{note:cont}
The Minkowski addition is not continuous on $\K^n$, even with respect to the bounded-Hausdorff topology. Let $\ell$ denote the nonpositive  portion of the $x$-axis in $\R^2$, and $\ell'_t$, $0\leq t\leq 1$, be the family of half-lines given by $y=tx$, $x\geq 0$; see Figure \ref{fig:wedge2}.
\begin{figure}[h]
\begin{center}
\begin{overpic}[width=3.5in ]{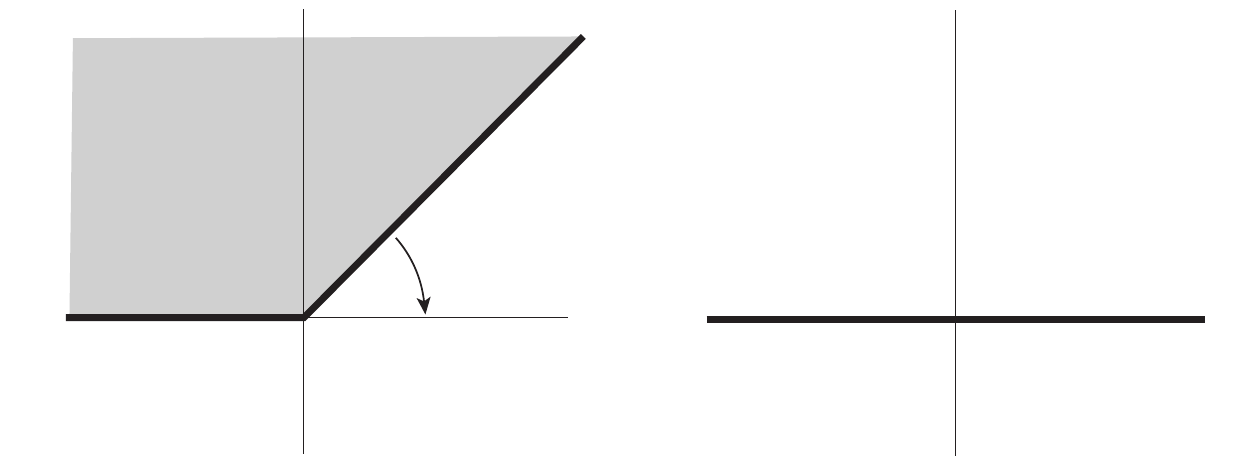}
\end{overpic}
\put(-220,16){$\ell$}
\put(-155,55){$\ell'_1$}
\put(-90,16){$\ell$}
\put(-40,16){$\ell'_0$}
\caption{}
\label{fig:wedge2}
\end{center}
\end{figure}
Now set $K:=\ell+\mathbf{B}^2$ and $K'_t:=\ell'_t+\mathbf{B}^2$. Note that $K+K'_t=\ell+\ell'_t +2\mathbf{B}^2$, and $\ell+\ell'_t$ is  the convex region bounded by the half-lines $\ell$ and $\ell'_t$ when $t>0$ Further, $\ell+\ell'_t\overset{bh}{\to}\mathbf{H}^2$, as $t\to 0$, while $\ell+\ell'_0=\R\times\{0\}$. Thus $K+K'_t\overset{bh}{\to}\mathbf{H}^2+2\mathbf{B}^2$, while $K+K'_1=\R\times\{0\}+2\mathbf{B}^2$.
\end{note}

\begin{note}\label{note:closed}
The sum of a pair of unbounded convex bodies is not in general closed. For instance let $K_0\subset\R^2$ be the convex body given by $y\geq 1/(1-x^2)$, $-1<x<1$, and let $K_1$ be the reflection of $K_0$ with respect to the $x$ axis. Then $K_0+K_1$ is the set $-2<x<2$.
\end{note}

\begin{note}\label{note:knucont}
The Minkowski addition is not continuous on $K^n_u$, even with respect to the bounded Hausdorff topology. Let $K_0\subset\R^3$ be the convex body given by $z\geq y^2$, and  $K_t$, $0\leq t\leq 1$, be the body obtained by a rotation of $K_0$ about the $z$-axis, so that $K_t$ intersects the $xy$-plane along the line $y=tx$. Then as $t\to 0$, $K_t\overset{a}{\to} K_0$; however, $K_t+K_0=\mathbf{H}^3$, for $t>0$, while $K_0+K_0=K_0$.
\end{note}

\section{ Regularity and Curvature of Minkowski Sums}\label{sec:sum}
Now we give  conditions for the Minkowski sum of convex bodies in $\K^n$  to  have the  remaining geometric and regularity properties which we need for our deformations  in Section \ref{sec:proofs}. Here \emph{curvature} refers to the Gaussian curvature. Recall that a ($C^2$) convex body $K\subset\R^n$ has positive curvature provided that the differential of its outward unit normal vector field or \emph{Gauss map} $\nu\colon\d K\to\S^{n-1}$ is nonsingular everywhere; which yields that, the \emph{principal curvatures}, i.e., the eigenvalues of the differential $d\nu$, are all positive.

\begin{prop}\label{prop:sum}
Let $K_0$, $K_1\subset \R^n$ be convex bodies, and suppose that 
$$
K:=K_0+K_1
$$ 
is closed. Then $K$ is also a convex body, and the following hold:
\begin{enumerate}
\item{If $K_0, K_1$ are strictly convex, then  so is $K$};
\item{If $K_0$  is $C^1$, then so is $K$};
\item{If  $K_0, K_1$ are $C^{2\leq k\leq\infty}$ and $K_0$ is positively curved, then $K$ is also $C^k$};
\item{If  $K_0, K_1$ are $C^\omega$ and $K_0$ is positively curved, then $K$ is also $C^\omega$};
\item{If  $K_0,K_1$  have positive  curvature, then so does $K$.}
\end{enumerate}
\end{prop}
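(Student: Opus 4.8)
The plan is to reduce the five assertions to local computations with convex graphs, using that over a supporting hyperplane the epigraphs of the relevant boundary functions add by \emph{infimal convolution}. Since $K_0$ has interior points and $K$ is closed by hypothesis, $K$ is a convex body. For \emph{(1)}, write $F(L,u):=\{x\in L:\langle x,u\rangle=\sup_L\langle\,\cdot\,,u\rangle\}$ for the face of a convex body $L$ in a direction $u\in\S^{n-1}$; one checks directly that $F(K_0+K_1,u)=F(K_0,u)+F(K_1,u)$. A nondegenerate segment in $\partial K$ lies in some face $F(K,u)$ (take $u$ an outer normal at its midpoint), which forces $F(K_0,u)$ and $F(K_1,u)$ to be nonempty; strict convexity of $K_0,K_1$ makes each a single point, so $F(K,u)$ is a single point -- a contradiction. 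Hence $K$ is strictly convex.

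For the regularity statements I would fix $p\in\partial K$, choose a nonzero $u$ in the normal cone $N(K,p)$ (the outer normals of support hyperplanes of $K$ through $p$, together with $o$), and write $p=p_0+p_1$ with $p_i\in K_i$. Since $p$ maximizes $\langle\,\cdot\,,u\rangle$ on $K$, each $p_i$ maximizes it on $K_i$, so $p_i\in\partial K_i$ and $u\in N(K_i,p_i)$. After a rotation take $u=-e_n$ and translate so $p_0=p_1=0$; then near $0$ the bodies $K_0,K_1,K$ are the epigraphs of convex functions $f_0,f_1,g$ of $x'=(x_1,\dots,x_{n-1})$ with $f_i(0)=0$ (and $\nabla f_i(0)=0$ whenever $K_i$ is $C^1$), and locally $g(x')=\inf_{y'}\big(f_0(y')+f_1(x'-y')\big)$, the infimum attained near $y'=0$ for $x'$ near $0$. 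For \emph{(2)}: if $K_0\in C^1$ then $f_0\in C^1$, and the standard subdifferential formula for an infimal convolution gives $\partial g(x')=\partial f_0(y')\cap\partial f_1(x'-y')\subset\{\nabla f_0(y')\}$ for any optimal $y'$; thus $\partial g(x')$ is a single point, $g$ is differentiable on an open set, and a finite convex function differentiable throughout an open set is $C^1$ there. Hence $\partial K\in C^1$.

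For \emph{(3)} and \emph{(4)} the crucial observation is that positive curvature of $K_0$ means $D^2 f_0>0$ (positive definiteness) in every such local graph, while convexity and $k\ge 2$ give $D^2 f_1\ge 0$. Hence $\Phi(x',y'):=\nabla f_0(y')-\nabla f_1(x'-y')$ has invertible $y'$-derivative $D^2 f_0(y')+D^2 f_1(x'-y')>0$; since $\Phi(0,0)=0$ and $\Phi$ is $C^{k-1}$ (resp.\ real analytic), the implicit (resp.\ analytic implicit) function theorem gives a $C^{k-1}$ (resp.\ $C^\omega$) solution $x'\mapsto y'(x')$ of the first-order condition, which by strict convexity of $y'\mapsto f_0(y')+f_1(x'-y')$ is the minimizer. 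The envelope identity $\nabla g(x')=\nabla f_1(x'-y'(x'))$ then shows $\nabla g$ is $C^{k-1}$ (resp.\ $C^\omega$), so $g\in C^k$ (resp.\ $C^\omega$); as $p$ was arbitrary, $\partial K\in C^k$ (resp.\ $C^\omega$). For \emph{(5)}, part \emph{(3)} with $k=2$ gives $K\in C^2$, so $K$ is smooth and in the coordinates above $u=-e_n$ is the unique outer normal at $p$, hence $\nabla g(0)=0$ and the second fundamental form of $\partial K$ at $p$ is $D^2 g(0)$. Differentiating the first-order condition at $0$ gives $Dy'(0)=(A+B)^{-1}B$ with $A:=D^2 f_0(0)>0$, $B:=D^2 f_1(0)>0$, whence $D^2 g(0)=B(A+B)^{-1}A=(A^{-1}+B^{-1})^{-1}$, the inverse of a sum of positive definite matrices and so positive definite. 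Since $p$ was arbitrary, $K$ has positive curvature.

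I expect \emph{(3)}/\emph{(4)} to be the main obstacle: one must carefully justify that $K_0+K_1$ is \emph{locally} the epigraph of the infimal convolution (so that the minimizing pair stays near $(p_0,p_1)$ as $x'$ varies), and one must watch the derivative lost in the implicit function theorem -- the optimizer $y'(\cdot)$ is only $C^{k-1}$ -- recovering the full $C^k$ regularity of $g$ from the envelope identity rather than by direct composition. The other steps amount to bookkeeping with boundary functions and normal cones.
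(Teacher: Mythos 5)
Your proof is correct, and it takes a genuinely different route from the paper's. The paper works semi-globally with the Gauss maps: since $K_0$ is positively curved, $\nu_0\colon\d K_0\to\ol\nc(K_0)$ is a $C^{k-1}$-diffeomorphism, and the map $f(x_1)=\nu_0^{-1}(\nu_1(x_1))+x_1$ is shown to be a $C^{k-1}$-diffeomorphism from an open subset of $\d K_1$ onto $\d K$; injectivity uses the face decomposition (Lemma~\ref{lem:Fu}), and immersivity uses a lemma on eigenvalues of products of self-adjoint operators (Lemma~\ref{lem:AB}). The $C^{k-1}\to C^k$ boost is then achieved by a separate lemma (Lemma~\ref{lem:gaussregularity}): a $C^{k-1}$ hypersurface with $C^{k-1}$ Gauss map is $C^k$. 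Your argument replaces all of this with local graph coordinates, infimal convolution, and the implicit function theorem applied to $\Phi(x',y')=\nabla f_0(y')-\nabla f_1(x'-y')$; your observation that $D_{y'}\Phi=D^2f_0+D^2f_1>0$ plays exactly the role of Lemma~\ref{lem:AB}, and your envelope-identity bootstrap $\nabla g(x')=\nabla f_1(x'-y'(x'))$ plays the role of Lemma~\ref{lem:gaussregularity}. Your closed-form $D^2g(0)=(A^{-1}+B^{-1})^{-1}$ is tidier than the paper's eigenvector argument for item~(5), which argues only that $d\nu^{-1}=d\nu_0^{-1}+d\nu_1^{-1}$ is nonsingular. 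The trade-off: the paper's parametrization is global in nature and avoids any discussion of whether the minimizer in the infimal convolution is attained near the basepoint, while yours is more elementary and gives explicit formulas, but requires the localization you flag. One place where the paper's route is decidedly simpler is item~(2): it observes that any support hyperplane of $K$ at $x=x_0+x_1$ supports the translate $K_0+x_1\subset K$, which by $C^1$-ness has a unique support hyperplane there; this avoids the subdifferential formula for infimal convolutions entirely, which (as you note in passing) requires exactness of the infimum -- a hypothesis that in item~(2), with $K_0$ merely $C^1$ and not strictly convex, is not automatic and would require the same localization argument you defer.
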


 Some of the items in the above proposition are known or easy to establish when the convex bodies are compact, since the support functions of compact convex bodies are additive, with respect to Minkowski sums, and closely mirror the regularity of the corresponding bodies \cite[Sec. 2.5]{schneider:book}. On the other hand, in the case of the unbounded convex bodies, which is the main focus of the above proposition, we need to work harder since the support function of an unbounded convex body is not well-defined (in the conventional sense). We should also mention that the various conditions in the above proposition are  sharp; in particular see Note \ref{note:regularity}. Further, it is elementary to check that $K$ is always a convex set with interior points, and thus it is a convex body as soon as it is closed (which, unless $K_0$ and $K_1$ are compact, is not automatic as we pointed out in Note \ref{note:closed}). Finally note that the above proposition is trivially true when $K=\R^n$. So we may assume that $K$ is proper.
The enumerated items of Proposition \ref{prop:sum} will be proved in sequence in the following subsections:

\subsection{Strict convexity}\label{subsec:stconvex}
Here we check that if $K_0$ and $K_1$ are strictly convex (everywhere) then so is $K$. Recall that for any convex body $K$, $\ol\nc(K):=\nc(K)\cap\S^{n-1}$ is the unit normal cone of $K$. Now for any $u\in\ol\nc(K)$ let $\d H_u(K)$ be the support hyperplane of $K$ with outward normal $u$, then $F_u(K):=\d H_u(K)\cap K$ is the corresponding \emph{face} of $K$. 

\begin{lem}\label{lem:Fu}
For any pair of convex bodies $K_0$, $K_1\subset\R^n$, 
$$
\ol\nc(K)=\ol\nc(K_0)\cap \ol\nc(K_1),
$$
where $K:=K_0+K_1$. Furthermore, for every $u\in\ol\nc(K)$, 
$$
F_u(K)=F_u(K_0)+F_u(K_1).
$$
\end{lem}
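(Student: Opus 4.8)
The plan is to work directly from the definition of the support (height) function in a fixed direction $u$, namely $h_A(u):=\sup_{a\in A}\l u,a\r$, which is finite precisely when $u\in\bc(A)$, and to exploit its additivity. First I would record the elementary identity $h_{K_0+K_1}(u)=h_{K_0}(u)+h_{K_1}(u)$ for every $u$, valid as an identity in $[-\infty,+\infty]$: indeed $\l u,k_0+k_1\r=\l u,k_0\r+\l u,k_1\r$, so taking suprema over $k_0\in K_0$ and $k_1\in K_1$ independently gives the sum (using that $K_0,K_1$ are nonempty so no $\infty-\infty$ ambiguity arises). Consequently $h_K(u)<\infty$ iff $h_{K_0}(u)<\infty$ and $h_{K_1}(u)<\infty$, i.e. $\bc(K)=\bc(K_0)\cap\bc(K_1)$. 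Next, for a unit vector $u$, membership $u\in\ol\nc(A)$ means exactly that the hyperplane $\{\l u,\cdot\r=h_A(u)\}$ is a genuine support hyperplane of $A$, which (since $A$ is closed) happens iff $h_A(u)<\infty$ and the supremum is attained, i.e. $F_u(A):=\{x\in A:\l u,x\r=h_A(u)\}\neq\emptyset$. So the normal-cone identity and the face identity are really the same statement, and I would prove the face identity, from which $\ol\nc(K)=\ol\nc(K_0)\cap\ol\nc(K_1)$ falls out by taking the condition ``$F_u\neq\emptyset$''.

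For the face identity, fix $u\in\ol\nc(K)$ (so in particular $h_K(u)<\infty$, hence by the additivity both $h_{K_0}(u)$ and $h_{K_1}(u)$ are finite). The inclusion $F_u(K_0)+F_u(K_1)\subset F_u(K)$ is the easy direction: if $x_j\in F_u(K_j)$ then $x_0+x_1\in K$ and $\l u,x_0+x_1\r=h_{K_0}(u)+h_{K_1}(u)=h_K(u)$, so $x_0+x_1\in F_u(K)$; this incidentally shows $F_u(K_0),F_u(K_1)\neq\emptyset$, giving the inclusion $\ol\nc(K)\subset\ol\nc(K_0)\cap\ol\nc(K_1)$. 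The reverse inclusion $F_u(K)\subset F_u(K_0)+F_u(K_1)$ is where the work is: given $x\in F_u(K)$, write $x=x_0+x_1$ with $x_j\in K_j$ (possible since $x\in K=K_0+K_1$); then $\l u,x_0\r+\l u,x_1\r=h_K(u)=h_{K_0}(u)+h_{K_1}(u)$, while $\l u,x_j\r\le h_{K_j}(u)$ for each $j$, so the two inequalities must be equalities, forcing $x_j\in F_u(K_j)$. Hence $x\in F_u(K_0)+F_u(K_1)$.

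Finally, for the opposite inclusion $\ol\nc(K_0)\cap\ol\nc(K_1)\subset\ol\nc(K)$: if $u$ lies in both $\ol\nc(K_0)$ and $\ol\nc(K_1)$, then $F_u(K_0)$ and $F_u(K_1)$ are both nonempty, and by the ``easy direction'' computation above $F_u(K_0)+F_u(K_1)\subset F_u(K)$, so $F_u(K)\neq\emptyset$, i.e. $u\in\ol\nc(K)$. This closes the normal-cone equality, and combined with the face identity proved in the previous paragraph, the lemma follows. I do not expect any real obstacle here; the only point requiring a little care is the bookkeeping with $\pm\infty$ values of the height functions, which is handled cleanly by noting that $K_0,K_1$ are nonempty (so each $h_{K_j}(u)>-\infty$) and that everything of interest takes place in directions $u$ where the relevant heights are finite because $h_K(u)<\infty$. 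No compactness or boundedness of $K_0,K_1$ is used, so the statement holds for the unbounded bodies that are the paper's focus.
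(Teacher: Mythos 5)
Your proof is correct and follows essentially the same approach as the paper: decompose a point $x\in F_u(K)$ as $x_0+x_1$ with $x_j\in K_j$, and use the fact that the two inequalities $\l u,x_j\r\le\sup_{K_j}\l u,\cdot\r$ must be equalities because their sum matches $\sup_K\l u,\cdot\r$. The paper phrases the same comparison via the translate $K_0+x_1\subset K$ rather than explicitly stating the additivity $h_{K_0+K_1}=h_{K_0}+h_{K_1}$, but the underlying computation is identical; your version just makes the support-function bookkeeping explicit upfront.
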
 
\begin{proof}
If $\ol\nc(K)=\emptyset$ then there is nothing to prove. Otherwise,
let $u\in \ol\nc(K)$. Then for any $x\in F_u(K)$, we have
$$
\l x, u\r=\sup_{K}\l \cdot,u\r.
$$ 
Next note that $x=x_0+x_1$, for some $x_0\in K_0$ and $x_1\in K_1$. So the last displayed expression yields that
$$
\l x_0, u\r=\sup_{K}\l (\cdot)-x_1,u\r\geq\sup_{K_0+x_1}\l (\cdot)-x_1,u\r=
\sup_{K_0}\l \cdot,u\r.
$$ 
It follows then that $u\in\ol\nc(K_0)$ and $x_0\in F_u(K_0)$. Similarly, one can show that $u\in\ol\nc(K_1)$ and $x_1\in F_u(K_1)$.
Then we have established that $\ol\nc(K)\subset\ol\nc(K_0)\cap \ol\nc(K_1)$ and $F_u(K)\subset F_u(K_0)+F_u(K_1)$. 

Conversely, suppose that $u\in \ol\nc(K_0)\cap \ol\nc(K_1)$. Then for any $x_0\in F_u(K_0)$ and $x_1\in F_u(K_1)$ we have
$$
\l x_0, u\r =\sup_{K_0} \l \cdot, u\r \quad \text{and}\quad \l x_1, u\r =\sup_{K_1} \l \cdot, u\r.
$$ 
So it follows that
$$
\l x_0+x_1, u\r=\sup_{K_0} \l \cdot, u\r+\sup_{K_1} \l \cdot, u\r= \sup_{K} \l \cdot, u\r.
$$
So $u\in\ol\nc(K)$ which completes the proof that $\ol\nc(K)=\ol\nc(K_0)\cap \ol\nc(K_1)$. Further, the last displayed expression also shows that $x_0+x_1\in F_u(K)$ and so we conclude that $F_u(K)=F_u(K_0)+F_u(K_1)$.
\end{proof}

Now note that $K$ is a strictly convex body if and only $F_u(K)$  is a singleton for all $u\in\ol\nc(K)$. Thus the above lemma quickly shows that $K$ is strictly convex whenever $K_0$ and $K_1$ are strictly convex.

\subsection{$\mathbf{C}^1$-regularity}\label{subsec:part4}
Next we check that if $K_0$ is $C^1$ then so  is $K$. To this end first we recall that a convex body  is $C^1$ if and only if through every boundary point of it there passes a \emph{unique} supporting hyperplane. This follows from the fact that locally any convex hypersurface may be represented as the graph of a convex function. More specifically,  a convex function is differentiable at a point if and only if it has only one subgradient at that point \cite[Thm 1.5.12]{schneider:book}, and the subgradient is unique if and only if the normal to epigraph of the function is unique \cite[Thm 1.5.12]{schneider:book}; further, here one also uses the fact  that a differentiable convex function is continuously differentiable \cite[Thm 1.5.2]{schneider:book}. 

Now note that if $\d K=\emptyset$, then there is nothing to prove. Otherwise, let $x\in\d K$, then $x=x_0+x_1$ for some points $x_0\in\d K_0$ and $x_1\in\d K_1$. In particular we may write $x\in K_0+x_1$. But $K_0+x_1$ is just a translation of $K_1$ and thus is $C^1$, and $x\in\d(K_0+x_1)$. Consequently, there passes only one support hyperplane of $K_0+x_1$ through $x$. On the other hand, any support hyperplane of $K$ must also support $K_0+x_1\subset K$. Thus it follows that the support hyperplane of $K$ passing through $x$ is unique. So, $K$ is $C^1$.

\subsection{$\mathbf{C}^k$-regularity}\label{sec:highreg}
Now  we show that if $K_0$ and $K_1$ are  $C^{2\leq k\leq \infty}$, and $K_0$ has  positive curvature, then $K$ is also $C^k$. First note that since $K_0$ and $K_1$ are both $C^1$, then the Gauss maps $\nu_0\colon\d K_0\to\S^{n-1}$ and $\nu_1\colon\d K_1\to\S^{n-1}$ are well-defined and are $C^{k-1}$. Further $K$ is $C^1$  by Section \ref{subsec:part4}, and so it too has a well-defined Gauss map $\nu\colon\d K\to\S^{n-1}$. Next note that if  $x\in\d K$, then by Lemma \ref{lem:Fu}, 
\begin{equation}\label{eq:x0x1}
x=x_0+x_1
\end{equation}
where $x_0\in\d K_0$, $x_1\in\d K_1$. Further  Lemma \ref{lem:Fu} implies that
 $$
 \nu_0(x_0)=\nu_1(x_1)=\nu(x).
 $$
Now,  since $K_0$ has positive curvature,  $d\nu_0$ is nonsingular and so $\nu_0\colon\d K_0\to\ol\nc(K_0)\subset\S^{n-1}$ is a $C^{k-1}$-diffeomorphism by the inverse function theorem. So \eqref{eq:x0x1}  may be rewritten as 
$$
x=\nu_0^{-1}\big(\nu_1(x_1)\big)+x_1.
$$
This suggests a possible parameterization for $\d K$. Indeed, for every $x_1\in \nu_1^{-1}(\ol \nc(K_0))$, $\nu_1(x_1)\in \ol \nc(K_0)$ and thus $\nu_0^{-1}(\nu_1(x_1))$ is well-defined. Further note  that $\nu_1^{-1}(\ol \nc(K_0))$ is open in $\d K_1$ since, by the positive curvature assumption, $\ol \nc(K_0)=\nu_0(\d K_0)$ is open in $\S^{n-1}$. So we have  a well-defined $C^{k-1}$ mapping:
\begin{equation}\label{eq:nu0}
 \nu_1^{-1}\big(\ol \nc(K_0)\big)\;\ni\; x_1\overset{f}{\longmapsto} x:=\nu_0^{-1}\big(\nu_1(x_1)\big)+x_1\;\in\;\d K.
\end{equation}
 
 We claim that $f$ is a diffeomorphism.
  First note that $f$ is onto, since if $x\in\d K$, then $\nu(x)\in\ol\nc(K)\subset\ol\nc(K_0)$ by Lemma \ref{lem:Fu}, and so $\nu_1^{-1}(\nu(x))$ lies in the domain of $f$. Thus we may compute that
  $$
  f(\nu_1^{-1}(\nu(x)))=\nu_0^{-1}(\nu(x))+\nu_1^{-1}(\nu(x))= F_{\nu_1(x)} K_0+F_{\nu_1(x)} K_1=F_{\nu_1(x)}(K),
  $$
  by Lemma \ref{lem:Fu}.
  So $f$ is onto since $F_{\nu_1(x)}(K)\ni x$. Next we check that $f$ is one-to-one. To see this  note that
 \begin{equation}\label{eq:fx1}
 f(x_1)\in F_{\nu_1(x_1)} K_0+F_{\nu_1(x_1)} K_1=F_{\nu_1(x_1)}(K),
 \end{equation}
 again by Lemma \ref{lem:Fu}. Also recall that $K$ is strictly convex by Section \ref{subsec:stconvex}. Thus the faces of $K$ are singletons. So \eqref{eq:fx1} implies that  $f(x_1)=f(x_1')$ only if $\nu_1(x_1)=\nu_1(x_1')$. But then
 $$
 x_1=f(x_1)-\nu_0^{-1}\big(\nu_1(x_1)\big)=f(x_1')-\nu_0^{-1}\big(\nu_1(x_1')\big)=x_1'.
 $$
 So $f$ is one-to-one.
 Finally we show that $f$ is an immersion.
To see this note that
$$
df=d\nu_0^{-1}\circ d\nu_1+I.
$$
Now suppose, towards a contradiction, that $df(v)$ vanishes for some nonzero vector $v$. Then 
$d\nu_0^{-1}\circ d\nu_1$ has a negative eigenvalue. But, recall that $dv_0$ and $d\nu_1$ are self-adjoint operators (this is a basic fact from classical differential geometry).  Furthermore all the eigenvalues of $d\nu_0$ are positive since $\d K_0$ has positive curvature. Consequently, $d\nu_0^{-1}$ is also a self-adjoint operator with positive eigenvalues. Further, since $\d K_1$ is convex, $d\nu_1$ has nonnegative eigenvalues. It follows then that $d\nu_0^{-1}\circ d\nu_1$ may not have any negative eigenvalues, which is the contradiction we seek,  by the following basic fact:

\begin{lem}\label{lem:AB}
Let $A$, $B\colon\R^n\to\R^n$ be self-adjoint linear operators. Suppose that the eigenvalues of $A$ are positive and the eigenvalues of $B$ are nonnegative. Then the eigenvalues of $AB$ are nonnegative.
\end{lem}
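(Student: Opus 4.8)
The plan is to reduce the statement to the positive semidefiniteness of a conjugate of $AB$. Since $A$ is self-adjoint with positive eigenvalues, it is positive definite, so the spectral theorem furnishes a self-adjoint, positive-definite square root: diagonalizing $A=Q\Lambda Q^{-1}$ with $Q$ orthogonal and $\Lambda$ diagonal with positive entries, set $A^{1/2}:=Q\Lambda^{1/2}Q^{-1}$. Then $A^{1/2}$ is self-adjoint and invertible, with inverse $A^{-1/2}$. The first step is to record the similarity
$$
A^{-1/2}\,(AB)\,A^{1/2}=A^{1/2}BA^{1/2}=:C,
$$
so that $AB$ and $C$ have the same characteristic polynomial, hence the same eigenvalues with multiplicities.

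The second step is to observe that $C$ is self-adjoint: $C^{*}=(A^{1/2})^{*}B^{*}(A^{1/2})^{*}=A^{1/2}BA^{1/2}=C$, using that $A^{1/2}$ and $B$ are self-adjoint. In particular all eigenvalues of $C$ are real. The third step is to check that $C$ is positive semidefinite. Since $B$ is self-adjoint with nonnegative eigenvalues, $\langle Bv,v\rangle\geq 0$ for all $v\in\R^{n}$; applying this with $v=A^{1/2}x$ gives
$$
\langle Cx,x\rangle=\langle B\,(A^{1/2}x),\,A^{1/2}x\rangle\geq 0
$$
for every $x\in\R^{n}$. A self-adjoint positive-semidefinite operator has only nonnegative eigenvalues, so the eigenvalues of $C$, and therefore those of $AB$, are nonnegative, which is the assertion.

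There is no real obstacle here; the only point requiring care is the existence of the square root $A^{1/2}$, which is exactly where the hypothesis that $A$ is self-adjoint with \emph{positive} (not merely nonnegative) eigenvalues is used — it guarantees $A^{1/2}$ is invertible, so that the conjugation in the first step is a genuine similarity. The remaining computations are routine linear algebra.
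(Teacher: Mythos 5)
Your proof is correct, but it takes a different route from the paper. You conjugate $AB$ by $A^{1/2}$ to obtain the self-adjoint, positive-semidefinite operator $C=A^{1/2}BA^{1/2}$, and then read off the eigenvalues from positive semidefiniteness; this requires invoking the spectral theorem to construct the square root. The paper instead argues directly by contradiction: if $AB(v)=-\lambda v$ with $v\neq o$ and $\lambda>0$, then $B(v)\neq o$ (since $A$ is invertible and $\lambda v\neq o$), and the chain
$$
0<Q_A\big(B(v)\big)=\big\langle AB(v),B(v)\big\rangle=\big\langle -\lambda v,B(v)\big\rangle=-\lambda Q_B(v)\leq 0
$$
is absurd, where $Q_A$, $Q_B$ are the associated quadratic forms. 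The paper's computation is shorter and avoids constructing $A^{1/2}$, but your conjugation argument yields strictly more: it shows $AB$ is similar to a symmetric positive-semidefinite matrix, hence in particular diagonalizable with real nonnegative spectrum, which the contradiction argument does not directly give. Both hypotheses are used in the same places (positivity of $A$ to invert or take a nonsingular square root; nonnegativity of $B$ to control the sign), so the approaches are genuinely parallel but distinct in mechanism.
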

\begin{proof}
Since $A$ is self-adjoint and has positive eigenvalues, the associated  quadratic form $Q_A(\cdot):=\l A(\cdot),\cdot\r$ is positive definite. Similarly, since $B$ is self-adjoint and has nonnegative eigenvalues,  $Q_B(\cdot):=\l B(\cdot),\cdot\r$ is nonnegative. Suppose now, towards a contradiction, that $AB(v)=-\lambda v$ for some $v\in\R^n-\{o\}$ and $\lambda>0$. Then $B(v)\neq o$ and consequently 
$$
0<Q_A\big(B(v)\big)=\big\l AB(v),B(v)\big\r=\big\l -\lambda v,B(v)\big\r=-\lambda Q_B(v)\leq 0,
$$
which is the desired contradiction.
\end{proof}

The last lemma completes the proof that $f$ given by \eqref{eq:nu0} is a $C^{k-1}$ diffeomorphism between an open subset of $\d K_1$ and $\d K$. Thus, since $\d K_1$ is $C^k$, it follows that  $\d K$ is (at least) $C^{k-1}$. 
But   the Gauss map $\nu$ of $\d K$ is also $C^{k-1}$, because
$$\nu(x)=\nu_1(x_1)=\nu_1\circ f^{-1}(x).$$
 So it follows that $\d K$ is actually $C^k$ by the following observation:

\begin{lem}\label{lem:gaussregularity}
Let $M\subset\R^n$ be a $C^{k-1}$, $k\geq 2$,  immersed oriented hypersurface, and suppose that the Gauss map $\nu\colon M\to\S^{n-1}$ is also $C^{k-1}$. Then $M$ is actually $C^k$.
\end{lem}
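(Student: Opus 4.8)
The plan is to work in a local graph representation, where the regularity of a convex hypersurface is encoded by the regularity of a convex function.

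First I would reduce to a local statement. Fix a point $p\in M$; after a rigid motion assume $p=o$ and that $\nu(p)=e_n=(0,\dots,0,1)$, so that near $p$ the hypersurface $M$ is the graph of a convex function $\varphi$ defined on a neighborhood $U$ of the origin in $\R^{n-1}=\{x_n=0\}$, with $\varphi(o)=0$ and $\grad\varphi(o)=o$. Since $M$ is $C^{k-1}$ as an immersed hypersurface, $\varphi$ is a $C^{k-1}$ function. The goal becomes: show $\varphi$ is $C^k$. The key point is to translate the hypothesis ``$\nu$ is $C^{k-1}$'' into an analytic statement about $\varphi$.

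Next I would write the Gauss map in this chart. For the graph of $\varphi$, the (downward-pointing, say, depending on orientation) unit normal at $(x,\varphi(x))$ is
$$
\nu(x,\varphi(x))=\frac{(-\grad\varphi(x),1)}{\sqrt{1+\|\grad\varphi(x)\|^2}}.
$$
Composing $\nu$ with the $C^{k-1}$ parametrization $x\mapsto(x,\varphi(x))$ and with the projection $\S^{n-1}\to\R^{n-1}$ onto the first $n-1$ coordinates — which is a $C^\infty$ diffeomorphism near $e_n$ — we see that the hypothesis ``$\nu$ is $C^{k-1}$'' is equivalent to the map
$$
G(x):=\frac{\grad\varphi(x)}{\sqrt{1+\|\grad\varphi(x)\|^2}}
$$
being $C^{k-1}$ on $U$. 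Now the assignment $w\mapsto w/\sqrt{1+\|w\|^2}$ is a $C^\infty$ diffeomorphism of $\R^{n-1}$ onto the open unit ball (its inverse is $z\mapsto z/\sqrt{1-\|z\|^2}$), so composing $G$ with this inverse shows that $x\mapsto\grad\varphi(x)$ is $C^{k-1}$. But if all first partials of $\varphi$ are $C^{k-1}$, then $\varphi$ is $C^k$ by definition of the regularity classes (for $k=\infty$ one applies this for every finite order; for $k$ finite it is immediate). This proves $\varphi\in C^k$, hence $M$ is $C^k$ near $p$, and since $p$ was arbitrary, $M$ is $C^k$.

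The main obstacle — really the only subtlety — is being careful that the various coordinate changes used (the projection $\S^{n-1}\to\R^{n-1}$ near $e_n$, and the diffeomorphism $w\mapsto w/\sqrt{1+\|w\|^2}$) are genuinely smooth diffeomorphisms on the relevant neighborhoods, so that composing with them neither raises nor lowers the differentiability class; all of these are elementary but should be checked. One should also note at the outset that $M$ being $C^{k-1}$ with $k\ge 2$ guarantees $M$ is at least $C^1$, so the graph representation and the Gauss map make sense; and that orientability is used only to have a globally defined $\nu$, the argument itself being purely local. I expect the write-up to be short once the graph reduction is in place.
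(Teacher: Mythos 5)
Your proof is correct, but it takes a genuinely different route from the paper. The paper uses the end point map $e(p,r)=p+r\nu(p)$ together with the tubular neighborhood theorem to produce a $C^{k-1}$ local diffeomorphism, then considers the signed distance function $d$ to $M$; since $\grad d(x)=\nu(\pi(x))$ is $C^{k-1}$, $d$ is $C^k$, and $M=d^{-1}(0)$ is a $C^k$ level set of a $C^k$ submersion. You instead work directly in a graph chart: you invert the explicit formula $\nu=(-\grad\varphi,1)/\sqrt{1+\|\grad\varphi\|^2}$ through the two smooth diffeomorphisms (stereographic-type projection near $e_n$, and $w\mapsto w/\sqrt{1+\|w\|^2}$) to read off that $\grad\varphi$ is $C^{k-1}$, hence $\varphi\in C^k$. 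Your version is more elementary and more explicit — it avoids the tubular neighborhood theorem and the intermediate distance function — at the cost of being tied to a particular chart. The paper's argument is chart-free and phrased so it ports more readily to other ambient settings, but both are perfectly valid here.

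One small remark: you repeatedly say ``convex function $\varphi$,'' but the lemma is stated for arbitrary $C^{k-1}$ immersed oriented hypersurfaces, not only convex ones (it is merely \emph{applied} to a convex hypersurface later). Your argument never uses convexity, so nothing breaks, but the word ``convex'' should be dropped to match the generality of the statement.
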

\begin{proof}
Let $p\in M$ and $U$ be a small neighborhood of $p$ in $M$. Further let $e\colon M\times\R\to \R^n$ be the \emph{end point map} given by $e(p,r):=p+r\nu(p)$. It follows from the tubular neighborhood (or the inverse function) theorem that $e\colon U\times (-\epsilon,\epsilon)\to V\subset\R^n$ is a $C^{k-1}$ diffeomorphism, for some $\epsilon>0$ and $U$ sufficiently small. Then the projection map $\pi\colon V\to U$ given by $\pi(x):=Pr_1\circ e^{-1}(x)$, where $Pr_1\colon M\times\R \to M$ is projection into the first component, is well defined and is $C^{k-1}$. Now let $d\colon V\to \R$ be the \emph{signed distance function} from $U$, which is given by 
$d(x):=\l x-\pi(x),\nu(\pi(x))\r$. The gradient of $d$ is then given by $(\grad d)(x)=\nu(\pi(x))$ which is $C^{k-1}$. So $d$ is $C^k$, and, since $d$ is a submersion, it follows that $U=d^{-1}(0)$ is a $C^{k}$ hypersurface.
\end{proof}

\subsection{Analyticity}
If $K_0$ and $K_1$ are analytic, and $K_0$ has positive curvature, then of course all the results of  Section \ref{sec:highreg} still hold. In particular, the parameterization $f$ given by \eqref{eq:nu0} would imply that $K=K_0+K_1$ is analytic as soon as we check that the Gauss maps $\nu_0$ and $\nu_1$ are analytic. But the Gauss map $\nu\colon M\to\S^{n-1}$ of an orientable analytic hypersurface $M\subset\R^n$ is always analytic, i.e., if  $f\colon U\subset\R^{n-1}\to M$  is any analytic local parameterization of $M$, then $\nu\circ f\colon U\to\R^n$ is analytic. To see this note that, for any fixed vector $v_0\in\R^n$, the 
projection of $v_0$ into the tangent space $T_{f(p)}M$ is given by 
$$
\ol v_0(p):=\sum_{i=1}^{n-1}\left\l v_0,  \frac{\d f}{\d x_i}(p)\right\r  \frac{\d f}{\d x_i}(p).
$$
Thus $\ol v_0\colon U\to\R^n$ is analytic. On the other hand, 
 if  we choose $v_0$ so that it is not tangent to $f(U)$ (which is always possible assuming $U$ is small), then
 $$
\nu\circ f(p)= \frac{v_0-\ol v_0(p)}{\|v_0-\ol v_0(p)\|}.
 $$
 So we conclude that $\nu\circ f$ is analytic.

\subsection{Curvature}
Lastly we show that if $K_0$ and $K_1$  have  positive curvature, then $K$  also  has positive curvature. Once again let $\nu_0$, $\nu_1$, and $\nu$ denote the Gauss maps of $\d K_0$, $\d K_1$, and $\d K$ respectively. Then, by Lemma \ref{lem:Fu}, for every $u\in\ol\nc(K)$ we have
$$
\nu^{-1}(u)=F_u(K)=F_u(K_0)+F_u(K_1)=\nu_0^{-1}(u)+\nu_1^{-1}(u).
$$
Note that $\nu_0^{-1}$ and $\nu_1^{-1}$ are $C^1$ by the inverse function theorem and the positive curvature assumption on $K_0$ and $K_1$. Consequently  $\nu^{-1}\colon \ol\nc(K)\to\d K$ is also a well-defined $C^1$ map. In particular we may compute that
$$
d\nu^{-1}_u=d(\nu_0^{-1})_u+d(\nu_1^{-1})_u,
$$
for any $u\in\ol\nc(K)$.
Now let $v_1,\dots, v_n\in T_u\S^{n-1}$ be the eigenvectors of $\nu_0^{-1}$ at $u$. Then $d(\nu_0^{-1})_u(v_i)=v_i/k^0_i(\nu^{-1}(u))$, where $k^0_i$ are the principal curvatures of $\d K_0$, which are all positive by assumption.  So we have
$$
d\nu^{-1}_u(v_i)=\frac{1}{k^0_i(\nu^{-1}(u))}v_i+d(\nu_1^{-1})_u(v_i).
$$
Thus $d\nu^{-1}_u(v_i)=0$ if and only if $v_i$ is an eigenvector of $d(\nu_1^{-1})_u$, with a negative eigenvalue. But the eigenvalues of  $d(\nu_1^{-1})_u$ are reciprocals of the  principal curvatures of $\d K_1$ which are all positive by assumption. Hence $d\nu^{-1}_u(v_i)\neq 0$, and since $v_i$ are linearly independent, it follows that $\nu^{-1}$, and consequently $\nu$, is nonsingular. So $\d (K_0+K_1)$ has nonzero curvature, which since $K_0+K_1$ is convex, must be positive.

\begin{note}\label{note:regularity}
The condition in item (3) of Proposition \ref{prop:sum} that $K_0$ have positive curvature is necessary. Indeed Kiselman \cite{kiselman} has shown that there exist $C^\infty$ convex bodies whose Minkowski sum is not even $C^2$, see also \cite{boman, krantz&parks}.
\end{note}

\section{Proofs of the Main Results}\label{sec:proofs}
Finally we are ready to prove the theorems mentioned in the introduction. Recall that, by the definition of the asymptotic topology on  $\d\K^n$ (Section \ref{subsec:top}), we  only need to construct our deformations $K_t$ for the spaces of convex bodies, for then $\d K_t$ yields the corresponding deformations for the spaces of convex hypersurfaces.
To construct the deformations we seek, we begin by translating our convex bodies until their \emph{apex} passes through the origin as described below.

\subsection{The apex}\label{subsec:apex}
For any $K\in\K^n$, $\apex(K)\subset\d K$ is an affine space which is defined as follows. Let $\d H$ be the support hyperplane of $K$ with outward normal $-\cd(K)$, and set $K':=\d H\cap K$. By Lemma \ref{lem:rc4}, $K'=\ol K'+L'$, where $L'$ is the linearity space of $K'$ and $\ol K'$ is the projection of $K'$ into $L'^\perp$. Now note that, since by Proposition \ref{prop:direction}
$\cd(K)$ is balanced, $\rc(K')$ is not relatively proper; therefore, $K'\not\in\K^{n-1}$, by Lemma \ref{lem:relbd}, where  we have identified $\d H$ with $\R^{n-1}$. Consequently, by Lemma \ref{lem:rc4}, $\ol K'$ is compact, and so its center of mass $\cm(\ol K')$ is well-defined. We may then set
$$
\apex(K):=\cm(\ol K')+L'.
$$
Thus we obtain an affine subset of $\d K$ which ranges from a single point (when $K\in\K^n_+$) to a  hyperplane (when $K$ is a half-space). Further it is not hard to see that $K\mapsto\apex(K)$ is asymptotically continuous, since $K\mapsto\cd(K)$ is asymptotically continuous by Proposition \ref{prop:direction}. Now  for every $K\in \K^n$ let $p$ be the closest point of 
$\apex(K)$ to $o$. Then
$$
K_t:=K-t\,p(K)
 $$
 gives a strong deformation retraction $\K^n\to\ol \K^n$, where $\ol\K^n$ is the space of those bodies $K\in\K^n$ with $o\in\apex(K)$.

\subsection{Proof of Theorem \ref{thm:main}}\label{subsec:proof1}
By the  discussion in Section \ref{subsec:apex} we just need to construct  a strong deformation 
retraction of $\ol \K^n$ into $\H^n$. To this end, let $H^t\subset\R^n$ be the family of hyperboloidal convex bodies given by  
$$
x_n\geq\left(\sqrt{1+\sum_{i=1}^{n-1}x_i^2}-1\right)\frac{1-t}{t}
$$ for $0< t\leq1$, and set $H^0$ equal to the upper-half of the $x_n$ axis. Note that $H^t$ gives an asymptotically continuous family of convex sets which range from the half-line $H^0$ to the upper-half plane $H^1=\mathbf{H}^n$, see Figure \ref{fig:hyperbolas}.
\begin{figure}[h]
\begin{center}
\begin{overpic}[width=5.2in ]{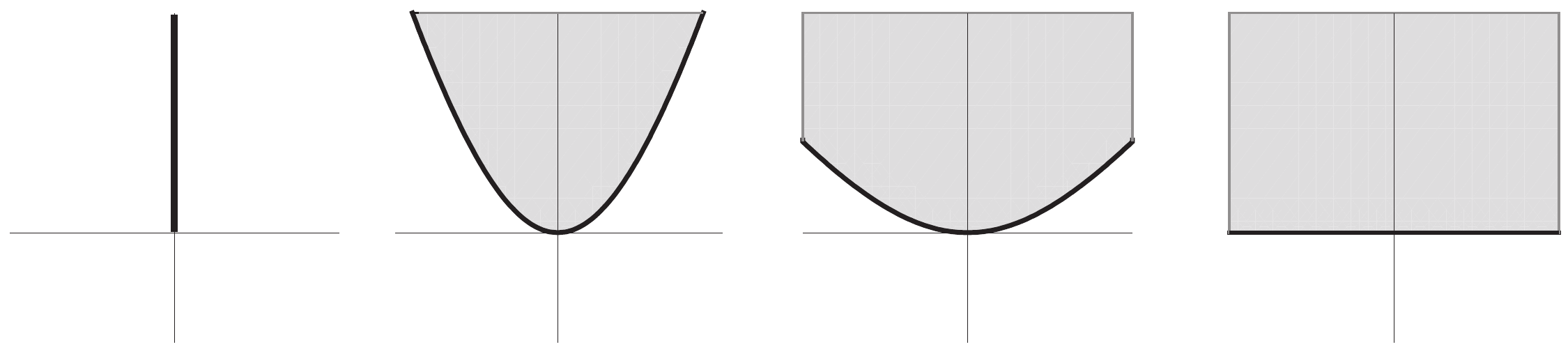}
\put(15,2){$H^0$}
\put(40,2){$H^\frac{1}{4}$}
\put(66,2){$H^\frac{1}{2}$}
\put(94,2){$H^1$}
\end{overpic}
\caption{}
\label{fig:hyperbolas}
\end{center}
\end{figure}
Next let $H^t_u$ be the object which is obtained by a rotation of $H^t$ about $o$ so that its central  direction coincides with $u$.
 Then, for $K\in\K^n$, we define
\begin{equation}\label{eq:main1}
K_t:=K+H^t_{\cd(K)}.
\end{equation}
This gives the desired retraction of $\ol \K^n$ into $\H^n$. In particular note that $K_0=K$ since $H^0_{\cd(K)}\subset \rc(K)$ (if $\ell\subset \rc(K)$ is any half-line, then $K+\ell=K$). Further $K_1=H^1_{\cd(K)}$, since $K\subset H^1_{\cd(K)}$. The asymptotic continuity of $K_t$, for $0<t\leq 1$, and that $K_t\in\K^n$, follows from Proposition \ref{prop:continuous2}, since $\cd(H^t_{\cd(K)})=\cd(K)$ (note that in applying Proposition \ref{prop:continuous2} here we are also implicitly using Proposition \ref{prop:direction} which guarantees that the central directions are always balanced).  Further it is clear that $K_t\overset{a}{\to}K_0$ as $t\to 0$. Thus $K_t$ is asymptotically continuous. Furthermore, the regularity preserving properties of $K_t$ follow from Proposition \ref{prop:sum}. Finally, we have to check that $\ol\nc(K_t)$ continuously and monotonically shrinks to a point. That $\ol\nc(K_t)$ changes continuously follows from Proposition \ref{lem:olnc}. Further $\ol\nc(K_1)=-\cd(K)$, a single point. It remains then to check the monotonicity, i.e., to show that 
$d_h(\ol\nc(K_t),-\cd(K))\to 0$ monotonically. To see this recall that by Lemma \ref{lem:Fu}
$$
\ol\nc(K_t)=\ol\nc(K)\cap\ol\nc\big(H^t_{\cd(K)}\big),
$$
and note that $d_h(\ol\nc(H^t_{\cd(K)}),-\cd(K))\to 0$ monotonically.

 \subsection{Proof of Theorem \ref{thm:main2}}\label{subsec:proof2}
Similar to the proof of  Theorem \ref{thm:main}, we may confine our attention to the space $\ol\K^n_+:=\ol \K^n\cap \K^n_+$.
Now, for $u\in\S^{n-1}$, let $P_u$ be the paraboloidal convex body which is obtained by rotating the solid paraboloid given by $x_n\geq\sum_{i=1}^{n-1}x_i^2$ about $o$ until $u$ becomes its central direction. Further, let $S_{\lambda,u}\colon\R^n\to\R^n$ denote the stretching along the direction $u$ by the factor $\lambda$, i.e., 
$$S_{\lambda,u}(x):=x+(\lambda-1)\l x,u\r u.$$ Then, for any $K\in \ol \K^n_+$ set 
\begin{equation}\label{eq:main2}
K_t:=(1-t)S_{\frac{1}{1-t},\cd(K)}(K)+t P_{\cd(K)}.
\end{equation}
It is obvious that $K_0=K$ and $K_1=P_{\cd(K)}$. Further the continuity of $K_t$ for $0<t<1$, and that $K_t\in\K^n_+$, follows immediately from Proposition \ref{prop:continuous} (which again applies via Proposition \ref{prop:direction}). Furthermore, note that as $t\to 1$, $(1-t)S_{\frac{1}{1-t},\cd(K)}(K)$ converges asymptotically to the half-line generated by $\cd(K)$ which lies in $P_{\cd(K)}$. Thus $K_t\overset{a}{\to}K_1$ as $t\to 1$. Similarly, since $t P_{\cd(K)}$ converges asymptotically to the half-line generated by $\cd(K)$, we have $K_t\overset{a}{\to}K_0$ as $t\to 0$. So we conclude that $K_t$ is asymptotically continuous, which shows that the total curvature $t\mapsto\tau(K_t)$ is continuous as well by Proposition \ref{lem:olnc}. Finally, the regularity and curvature preserving properties of $K_t$ again follow from Proposition \ref{prop:sum}.
  
 \subsection{Other topological types}\label{subsec:proof3}
 Here we address the case of unbounded  convex bodies $K\varsubsetneq \R^n$ whose boundary is not homeomorphic to $\R^{n-1}$. In that case, it follows from Lemma \ref{lem:rc4}  that $\d K$ is homeomorphic to $\S^{n-m-1}\times\R^m$, $m=1,\dots,n-1$. Thus there are, in addition to the case of $\K^n$, $n-1$ other topological types of proper unbounded convex bodies in $\R^n$, which we denote respectively by $\K^{n,m}$. Let $\B^{n,m}\subset\K^{n,m}$ be the subspace which is obtained by the action of $SO(n)$ on $\B^{n-m}\times\R^m\subset\R^n$. Then,   the contractibility of the space of compact convex bodies quickly  yields  that

\begin{thm}\label{thm:main3}
$\K^{n,m}$  (resp. $\d \K^{n,m}$) admits a  regularity preserving strong deformation retraction onto $\B^{n,m}$ (resp. $\d \B^{n,m}$) with respect to the asymptotic topology.
\end{thm}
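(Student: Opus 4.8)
The plan is to run the elementary contraction \eqref{eq:first} of compact convex bodies onto the ball \emph{fiberwise over the Grassmannian} $Gr(m,n)$, after a preliminary recentering. The structural input is Lemma \ref{lem:rc4}: every $K\in\K^{n,m}$ splits as $K=\ol K+L$, where $L=L(K)$ is the $m$-dimensional linearity space of $K$ and $\ol K=K\cap L^\perp$ is a compact convex body in $L^\perp$. Since $\ol K$ is bounded, Lemmas \ref{lem:rc1} and \ref{lem:rcsum} give $\rc(K)=\rc(\ol K)+\rc(L)=L$; hence $K\mapsto L(K)=\rc(K)$ is asymptotically continuous by the construction of the asymptotic topology, and so are $L(K)^\perp$ and the orthogonal projection onto it. Finally, the elements of $\B^{n,m}$ are precisely the bodies $\mathbf{B}_L+L$ with $L\in Gr(m,n)$, where $\mathbf{B}_L$ denotes the unit ball of $L^\perp$ centered at $o$: indeed, if $g\in SO(n)$ carries $\{o\}\times\R^m$ onto $L$, then $g(\B^{n-m}\times\R^m)=\mathbf{B}_L+L$.

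For $K\in\K^{n,m}$ write $L=L(K)$, $\ol K=K\cap L^\perp$, and let $c:=\cm(\ol K)\in L^\perp$ be the center of mass of $\ol K$, which is well defined since $\ol K$ has interior points in $L^\perp$. I would then set
\[
K_t:=
\begin{cases}
K-2t\,c, & 0\le t\le\tfrac12,\\[1mm]
(2-2t)\bigl(\ol K-c\bigr)+(2t-1)\,\mathbf{B}_L+L, & \tfrac12\le t\le1,
\end{cases}
\]
where in the second line $(2-2t)(\ol K-c)+(2t-1)\mathbf{B}_L$ is a Minkowski sum of compact convex bodies inside $L^\perp$, to which the subspace $L$ is then adjoined. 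One checks immediately that $K_0=K$, that the two formulas agree at $t=\tfrac12$ (both equal $(\ol K-c)+L$, since $c\in L^\perp$), and that $K_1=\mathbf{B}_L+L\in\B^{n,m}$; and if $K\in\B^{n,m}$ then $\ol K=\mathbf{B}_L$ and $c=o$, so $K_t=K$ for all $t$. For every $t\in[\tfrac12,1]$ the set $(2-2t)(\ol K-c)+(2t-1)\mathbf{B}_L$ is a compact convex body in $L^\perp$ (for $\tfrac12<t<1$ both summands have interior points, while at $t=\tfrac12$ and $t=1$ it equals $\ol K-c$ or $\mathbf{B}_L$), so by Lemma \ref{lem:rc4} each $K_t$ lies in $\K^{n,m}$, with recession cone the constant subspace $L$. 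Thus, set-theoretically, $t\mapsto K_t$ is a strong deformation retraction of $\K^{n,m}$ onto $\B^{n,m}$.

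Regularity preservation follows from Proposition \ref{prop:sum}. For $t\in[0,\tfrac12]$ each $K_t$ is a translate of $K$. For $t\in(\tfrac12,1]$ the bracketed set is the Minkowski sum in $L^\perp$ of the positively curved analytic body $(2t-1)\mathbf{B}_L$ with the dilated translate $(2-2t)(\ol K-c)$ of $\ol K$, so items (2)--(4) of Proposition \ref{prop:sum} show that it lies in the same regularity class $C^k$ ($1\le k\le\infty$ or $k=\omega$) as $\ol K$; adjoining $L$ changes nothing, since near a boundary point $\d(A+L)$ is the graph of the same function describing $\d A$, extended constantly along $L$. As $\d K$ and $\d\ol K$ have the same regularity class, $\d K_t$ is $C^k$ whenever $\d K_0$ is. Passing to boundaries, $\d K_t$ then furnishes the asserted deformation of $\d\K^{n,m}$ onto $\d\B^{n,m}$, by the definition of the asymptotic topology on hypersurface spaces.

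The remaining, and only delicate, point is the asymptotic continuity of $(K,t)\mapsto K_t$. Because $\rc(K_t)=L(K)$ varies continuously with $K$ and not at all with $t$, the recession-cone term of the asymptotic distance is automatically controlled, and it suffices to prove bounded-Hausdorff continuity of the bodies. For a fixed subspace $L$ this reduces to continuity of the center of mass on compact convex bodies, of dilations and translations, and of Minkowski addition of compact sets, the last being Lemma \ref{lem:X}. To absorb the variation of $L=L(K)$ I would use that $SO(n)\to Gr(m,n)$ is a fiber bundle, hence admits continuous local sections $L\mapsto g_L\in SO(n)$ with $g_L(L_0)=L$; conjugating the construction by $g_L$ moves everything into a fixed frame $L_0$, where the fiberwise retraction of $\ol K$ — translate its center of mass to $o$, then Minkowski-interpolate to $\mathbf{B}_{L_0}$ — is equivariant under $SO(n-m)$ (the center of mass, Minkowski sums about $o$, and the round ball are all rotation equivariant), so the locally defined deformations patch together into the globally well-defined $K_t$ above. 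I expect this bundle-theoretic bookkeeping of continuity across the Grassmannian to be the main obstacle; the geometric content is already supplied by \eqref{eq:first} and Proposition \ref{prop:sum}.
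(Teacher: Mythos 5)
Your proposal is correct and takes essentially the same route as the paper: split $K=\ol K+L$ via Lemma \ref{lem:rc4}, run the compact-body interpolation \eqref{eq:first} on $\ol K$ inside $L^\perp$, adjoin $L$ back, and invoke Proposition \ref{prop:sum} for regularity (the paper sets $\ol K_t:=(1-t)\pi(K)+tB_L$ and $K_t:=\pi^{-1}(\ol K_t)$). The preliminary translation to $\cm(\ol K)$ is harmless but unnecessary, since $(1-t)\ol K+t\mathbf{B}_L$ already fixes $\mathbf{B}_L$ pointwise and lands on $\mathbf{B}_L$ at $t=1$ wherever $\ol K$ sits; and your added attention to continuity as $L=\rc(K)$ varies over the Grassmannian is a point the paper's proof leaves implicit.
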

\begin{proof}
  If $K\in\K^{n,m}$, then  recall that $\rc(K)$ contains a nontrivial maximal linear subspace $L$ of dimension $m$ (see Section \ref{subsec:rc}). Let $\K^{n,m}_L$ be the collection of all bodies in $\K^{n,m}$ with linearity space $L$. Next  let  $\pi\colon\R^n\to L^\perp$ be the orthogonal projection. Then $\pi(K)$ is a compact convex body in $L^\perp$ for all $K\in \K^{n,m}_L$. Consequently, there is a homotopy $\ol K_t:= (1-t)\pi(K)+t B_L$ which deforms $\pi(K)$ to the unit ball $B_L:=\mathbf{B}^n\cap L$ centered at the origin of $L^\perp$. Then $K_t:=\pi^{-1}(\ol K_t)$ gives a homotopy between $K$ and $\pi^{-1}(B_L)=B_L + L$. Further, since $\ol K_t$ is regularity preserving (e.g., by Proposition \ref{prop:sum}), it follows that $K_t$ is regularity preserving as well.
  \end{proof}

\section*{Acknowledgement}
The author thanks Harold Rosenberg for his interesting question on deformations of complete positively curved hypersurfaces  \cite{rosenberg:question}, which was the prime stimulus for this work. Thanks also to Gerald Beer and Roger Wets for informative communications with regard to various hyperspace topologies.

\bibliographystyle{abbrv}

\end{document}